\numberwithin{equation}{section}
\newtheorem{definition}{Definition}
\newtheorem{theorem}{Theorem}
\newtheorem{rem}{Remark}
\newtheorem{lemma}{Lemma}
\newcommand{\loc}{\operatorname{loc}}
\newcommand{\tcb}{\textcolor{black}}
\newcommand{\tcr}{\textcolor{black}}
\title[Diffusive limit of radiative heat transfer system]{On the diffusive limits of radiative heat transfer system I: well prepared initial and boundary conditions}
\author{Mohamed Ghattassi}
\address{Department of Mathematics, New York University in Abu Dhabi, Saadiyat Island, P.O. Box 129188, Abu Dhabi, United Arab Emirates, {\sf mg6888@nyu.edu}}
\author{Xiaokai Huo}
\address{Institute for Analysis and Scientific Computing, Vienna University of Technology, Wiedner Hauptstraße 8–10, 1040 Wien, Austria, {\sf xiaokai.huo@tuwien.ac.at}}
\author{Nader Masmoudi}
\address{Department of Mathematics, New York University in Abu Dhabi, Saadiyat Island, P.O. Box 129188, Abu Dhabi, United Arab Emirates--
Courant Institute of Mathematical Sciences, New York University, 251 Mercer Street, New York, NY 10012, USA, {\sf nm30@nyu.edu}}
\begin{document}                
 
\begin{abstract} We study the diffusive limit approximation for a nonlinear radiative heat transfer system \tcb{that arises in the modeling of glass cooling, greenhouse effects and in astrophysics. The model is considered} with the reflective radiative boundary conditions for the radiative intensity, and periodic, Dirichlet and Robin boundary conditions for the temperature.  The global existence of weak solutions for this system is given by using a Galerkin method with a careful treatment of the boundary conditions. Using the compactness method, averaging lemma and Young measure theory, we prove our main result that the weak solution converges to a nonlinear diffusion model in the diffusive limit. Moreover, under more regularity conditions on the limit system, the diffusive limit is also analyzed by using a relative entropy method. In particular, we get a rate of convergence. The initial and boundary conditions are assumed to be well-prepared in the sense that no initial and boundary layer exist.
 \end{abstract}
\keywords{Radiative transfer system, Compactness method, Averaging lemma, Young measures, Relative entropy}

\maketitle   
\tableofcontents

\section{Introduction}\label{sec:intro}
Radiative transfer equation describes the physical phenomenon of energy transport in radiation. It has a variety of applications, such as cooling glass \cite{thommes2002numerical}, radiation hydrodynamics \cite{mihalas2013foundations}, astrophysics \cite{peraiah2002introduction}, and greenhouse effect \cite{bardos2021radiative,golse2021stratified}. In this paper we consider a model of glass cooling with the radiative heat transfer equation coupled with a heat equation. The model is given by
\begin{equation}\label{eq1}
\begin{cases}
\displaystyle c_{m}\rho_{m}\partial_{t}T=k_{h}\Delta T-\int_{0}^{\infty}\int_{\mathbb{S}^{2}}\kappa\left(\mathcal{B}-\psi\right)d\beta d\nu,~~t>0,x \in \Omega,\\
\displaystyle \frac{1}{c}\partial_{t}\psi +\beta \cdot\nabla \psi=\kappa\left(\mathcal{B}-\psi\right), ~~t>0, (x,\beta,\nu)\in \Omega\times\mathbb{S}^{2}\times\mathbb{R}_+.
\end{cases}
\end{equation}
Here $\Omega \subset \mathbb{R}^3$ is a bounded domain, $\mathbb{S}^2$ is the unit sphere in $\mathbb{R}^3$. The function $T=T(t,x)$ denotes the temperature of the medium and $\psi=\psi(t,x,\beta,\nu)$ describes the specific radiation intensity at  $x\in \Omega$ traveling in the direction $\beta\in \mathbb{S}^{2}$ with frequency $\nu>0$ at time $t>0$. The constants $c_{m}$, $\rho_{m}$, $k_{h}$, $\kappa$ and $c$ are the specific heat, the density, the thermal conductivity, the opacity coefficient, and the speed of light, respectively. Furthermore, $\mathcal{B}=\mathcal{B}(\nu,T)$ denotes the Planck's function
$$\mathcal{B}\left(\nu,T\right):=\frac{2h_{p}\nu^{3}}{c^{2}\left(e^{\frac{h_{p}\nu}{k_{b}T}}-1\right)}$$
for black body radiation in glass. Here $h_{p}$ is the Planck's constant and $k_{b}$ is the Boltzmann's constant. We refer the reader to \cite{frank2010optimal}, \cite{modest2013radiative} and references therein for more radiative heat transfer models. 

In order to solve the glass cooling model \eqref{eq1}, we need to provide initial and boundary conditions for $T$ and $\psi$. The initial conditions are taken to be
\begin{align*}
	T(t=0,x) = ~& T_0(x), \; \text{for any }x\in\Omega, \\
	\psi(t=0,x,\beta,\nu) = ~& \psi_0(x,\beta,\nu), \; \text{for any } (x,\beta,\nu)\in \Omega\times\mathbb{S}^2\times\mathbb{R}_+.
\end{align*}
The boundary condition for the temperature $T$ is the following Robin boundary condition
\begin{align}\label{eq:robinfort}
 k n \cdot\nabla T(t,x)=h_{c}\left(T_{b}(t,x)-T(t,x)\right), \text{ for any }t>0,\; x\in\partial\Omega.
\end{align}
Here $T_b=T_b(t,x)>0$ is a nonnegative function, $n=n(x)$ is the outward unit normal vector to the boundary $\partial\Omega$, $h_{c}$ is the convective heat transfer coefficient and $k \ge 0$ is a constant. When $k=0$, this corresponds to a nonhomogenous Dirichlet boundary condition. To give the boundary condition for $\psi$, we define the boundary set $\Sigma = \partial\Omega\times\mathbb{S}^2$ and
\begin{equation}\label{gt0.1}
\Sigma_{-}:=\left\{(x,\beta)\in\partial\Omega\times\mathbb{S}^{2},\; \beta\cdot n(x)<0\right\},
\end{equation}
\begin{equation}\label{gt2.1}
\Sigma_{+}:=\left\{ (x,\beta)\in \partial \Omega \times\mathbb{S}^{2},\;\beta\cdot n(x)> 0\right\}.
\end{equation}
The boundary condition for the specific radiation intensity is taken to be the following reflecting absorbing mixed condition
\begin{align*}
\psi \left(t,x,\beta,\nu\right)=\alpha \psi_b(t,x,\beta,\nu)+(1-\alpha) \psi(t,x,\beta',\nu), \quad  t>0,\;\left(x,\beta\right)\in \Sigma_{-},\;\nu \in\mathbb{R}_+.
\end{align*} 
Here $\psi_b=\psi_b(t,x,\beta,\nu)$ is a given function defined in the in-flow direction, which is on the half surface $\Sigma_-$ and it describes the radiative intensity transmitted into the medium from outside. The coordinate $\beta^{'}\in\mathbb{S}^2$ is the exiting radius  which specularly reflects into the incident radius $\beta$ as $
\beta' = \beta - 2 (n(x)\cdot \beta) n(x)$, and $\alpha \in (0,1)$ is a constant. 

Next, we give the dimensionless form of the system \eqref{eq1}. We introduce the nondimensional parameter $\varepsilon=1/\kappa_{r}x_{r}$, where $x_{r}$ and $\kappa_{r}$ are the length scale and reference absorption, respectively. Physically $\varepsilon$ represents the ratio of a typical photon mean free path to a typical length scale of the problem. The rescaled system is given by 
\begin{equation*}
\begin{cases}
\displaystyle \varepsilon^{2}\partial_{t}T=\varepsilon^{2}k\Delta T-\int_{0}^{\infty}\int_{\mathbb{S}^{2}}\kappa\left(\mathcal{B}-\psi\right) d\beta d\nu,~~ t>0,\;x\in\Omega,\\
\displaystyle \varepsilon^{2}\frac{1}{c}\partial_{t}\psi +\varepsilon\beta \cdot\nabla \psi=\kappa\left(\mathcal{B}-\psi\right),~~  t>0,\;(x,\beta,\nu)\in\Omega\times\mathbb{S}^{2}\times\left(0,\infty\right).
\end{cases}
\end{equation*}
See \cite{frank2010optimal} for mores details on the derivation. We consider the glass cooling in the grey medium, that is
$\mathcal{B}$ does not depend on the frequency $\nu$. The specular black body intensity $\mathcal{B}$ is then given by $\mathcal{B}=\frac{\sigma}{\pi} T^{4}$, according to the Stefan-Boltzmann law. For simplicity, we take all the constants in \eqref{eq1} and \eqref{eq:robinfort} to be the same $k=\kappa=h_{c}=c=1$, and take $\sigma=\pi$.  Since the solutions of the above system depends on $\varepsilon$, we introduce new notations $T_{\varepsilon} = T_\varepsilon(t,x)$ and $\psi_{\varepsilon}=\psi_\varepsilon(t,x,\beta)$ to represent the temperature and the radiative intensity, respectively. We introduce the notation $\langle \psi_{\varepsilon}\rangle :=\int_{\mathbb{S}^{2}}\psi_{\varepsilon} d\beta$ which is the radiative density, the system \eqref{eq1} then can be written as 
\begin{align}
	\partial_t T_\varepsilon = ~& \Delta T_\varepsilon +\frac{1}{\varepsilon^2}\langle \psi_\varepsilon - T_\varepsilon^4\rangle
	 , \label{eq:Teps} \\
	\partial_t \psi_\varepsilon + \frac{1}{\varepsilon} \beta \cdot \nabla \psi_\varepsilon = ~&-\frac{1}{\varepsilon^2}(\psi_\varepsilon - T_\varepsilon^4). \label{eq:psieps}
\end{align}
The initial conditions are taken to be
\begin{align}
	T_\varepsilon(t=0,x) = ~& T_{\varepsilon 0}(x), ~~ \text{ for any } x\in\Omega, \label{eq:ic1}\\
	\psi_\varepsilon(t=0,x,\beta) = ~& \psi_{\varepsilon 0}(x,\beta), ~~ \text{ for any } x\in \Omega, \beta\in\mathbb{S}^2. \label{eq:ic2}
\end{align}
The boundary condition for $\psi_\varepsilon$ is taken to be
\begin{align}\label{bpsi}
	\psi_\varepsilon(t,x,\beta) = \alpha \psi_b(t,x,\beta) + (1-\alpha)(L \psi_\varepsilon)(t,x,\beta),\,t>0, \quad (x,\beta) \in \Sigma_{-},
\end{align}
where the reflection operator $L$ is defined by
\begin{align}\label{eq:reflectop}
	L(f(x,\beta)):=f(x,\beta')=f(x,\beta-2(n(x)\cdot \beta)n(x)).
\end{align}
The boundary data for $T_\varepsilon$ is taken to be one of the following three conditions:
\begin{enumerate}[label=(\Alph*)]
	\item On the torus: 
	\begin{align}
		\Omega=\mathbb{T}^3, \label{b1}
	\end{align}
	\item Dirichlet boundary condition: 
	\begin{align}
		T_\varepsilon(t,x)=T_b(t,x), \text{ for any } x \in\partial \Omega, \label{b3} 
	\end{align} 
	\item Robin boundary condition:
	 \begin{align}
	 	\varepsilon^r n \cdot \nabla T_\varepsilon(t,x) = -T_\varepsilon(t,x) + T_b(t,x), \text{ for any } x\in \partial \Omega. \label{b2}
	 \end{align}
\end{enumerate}
Here $r \ge 0$ is a nonnegative constant.

The parameter $\varepsilon$ is usually small in applications and it plays an important role in the system \eqref{eq:Teps}-\eqref{eq:psieps}. It is interesting and physically meaningful to study the behavior of its solutions as $\varepsilon \to 0$. We call such a limit the diffusive limit. The objective of this paper is to study the diffusive limit rigorously. 
 First we derive the limit system formally.

\subsection{Formal derivation of the limit system.}
By equation \eqref{eq:psieps}, 
\begin{align}\label{eq:10102}
	\psi_{\varepsilon}=T_{\varepsilon}^{4}-\varepsilon\beta \cdot\nabla \psi_{\varepsilon} -\varepsilon^{2} \partial_{t}\psi_{\varepsilon}.
\end{align}
Therefore, for small $\varepsilon$,  we have 
\[
\psi_{\varepsilon}=T_{\varepsilon}^{4}-\varepsilon\beta \cdot\nabla\left( T_{\varepsilon}^{4}-\varepsilon\beta \cdot\nabla \psi_{\varepsilon}-\varepsilon^2\partial_t \psi_\varepsilon\right)-\varepsilon^{2} \partial_{t}\left(T_{\varepsilon}^{4}-\varepsilon\beta \cdot\nabla \psi_{\varepsilon} -\varepsilon^{2} \partial_{t}\psi_{\varepsilon}\right).
\]
Combing the terms with the same order gives
\[
\psi_{\varepsilon}=T_{\varepsilon}^{4}-\varepsilon \beta \cdot\nabla T_{\varepsilon}^{4}-\varepsilon^{2}\left(\partial_{t}T_{\varepsilon}^{4}-\beta \cdot\nabla\left(\beta \cdot\nabla \psi_\varepsilon\right)\right)+ \varepsilon^3(\beta \cdot \partial_t \psi_\varepsilon + \beta \cdot \nabla \partial_t \psi_\varepsilon) + \varepsilon^4 \partial_t^2 \psi_\varepsilon.
\]
We can use \eqref{eq:10102} again in the third term on the right of the above equation and obtain
\begin{align}\label{eq:psiasymp}
	\psi_\varepsilon = ~& T_\varepsilon^4 - \varepsilon \beta \cdot \nabla T_\varepsilon^4 - \varepsilon^2 \left(\partial_t T_\varepsilon^4 - \beta \cdot \nabla (\beta \cdot \nabla T_\varepsilon^4)\right) \nonumber\\
	&+ \varepsilon^3(-\beta \cdot \nabla(\beta \cdot \nabla (\beta \cdot \nabla \psi_\varepsilon))+\beta \cdot \partial_t \psi_\varepsilon + \beta \cdot \nabla \partial_t \psi_\varepsilon) \nonumber\\
	&+ \varepsilon^4 (- \beta \cdot\nabla(\beta \cdot \nabla \partial_t\psi_\varepsilon) +\partial_t^2 \psi_\varepsilon).
\end{align}
Assuming $T_\varepsilon \in C_{t,x}^2$ and $\psi_\varepsilon \in C_{t,x,\beta}^3$ are bounded, and assuming 
\[T_\varepsilon \to \overline{T}, \quad \psi_\varepsilon \to \overline{\psi} \quad \text{as }\varepsilon\to 0,\]
we can pass to the limit $\varepsilon\to0$ in \eqref{eq:psiasymp} and get 
\[\overline{\psi} = \overline{T}^4.\]
We can also use \eqref{eq:psiasymp} to find that the radiative density $\langle \psi_{\varepsilon}\rangle$ satisfies
\begin{align*}
	\langle \psi_\varepsilon \rangle =~& 4\pi T_{\varepsilon}^{4}-\varepsilon^{2}4\pi\partial_{t}T_{\varepsilon}^{4}+\varepsilon^{2}\frac{4\pi}{3}\Delta T_{\varepsilon}^{4} \\
	&+  \varepsilon^3\langle (-\beta \cdot \nabla(\beta \cdot \nabla (\beta \cdot \nabla \psi_\varepsilon))+\beta \cdot \partial_t \psi_\varepsilon + \beta \cdot \nabla \partial_t \psi_\varepsilon) \rangle \nonumber\\
	&+ \varepsilon^4 \langle(- \beta \cdot\nabla(\beta \cdot \nabla \partial_t\psi_\varepsilon) +\partial_t^2 \psi_\varepsilon)\rangle.
\end{align*}
This enables us to pass to the limit on the last term in \eqref{eq:Teps}:
\begin{align*}
	\frac{1}{\varepsilon^2}(\langle \psi_\varepsilon - T_\varepsilon^4\rangle) \to -4\pi\partial_t \overline{T}^4 + \frac{4\pi}{3}\Delta\overline{T}^4.
\end{align*}
We can then pass to the limit in \eqref{eq:Teps} and derive the following nonlinear limit system 
\begin{align}\label{hgm1.0}
	 \partial_{t}\left(\overline{T}+4\pi \overline{T}^{4}\right)=~&\Delta\left(\overline{T}+\frac{4\pi}{3}\overline{T}^{4}\right),\\
	 \overline{T}(t=0,x) =~& \overline{T}_0(x) = \lim_{\varepsilon \to 0} T_{\varepsilon 0}(x), \quad x\in\Omega,
\end{align}
associated with suitable boundary conditions which will be given in section \ref{section3}.

\subsection{Main results of the paper}
Before introducing our main results in this work, we start by giving some assumptions on the initial and boundary values. 
\begin{itemize}
\item \textbf{Well-prepared initial conditions}
\begin{align}\label{eq:wellinitial}
	\lim_{\varepsilon \to 0}(\psi_{\varepsilon0}(x) - T_{\varepsilon0}^4)=0, \;\text{for all } x\in\Omega,
\end{align}
\item   \textbf{Well-prepared boundary conditions} in the case of Dirichlet boundary condition \eqref{b3}, namely
\begin{align}\label{eq:wellbc}
	\psi_b(t,x,\beta) = T_b^4(t,x), \; \text{for all } t>0, \text{ and }(x,\beta)\in \Sigma_-.
\end{align}
Notice that for the case of Robin boundary condition \eqref{b2}, the well-prepared boundary condition assumption is not needed. The case of general boundary conditions will be discussed in \cite{Bounadrylayer2019GHM2,Bounadrylayer2021GHM3}.

\end{itemize}
We now state the main results in the following theorem.
\begin{theorem}
Suppose the \tcb{ positive initial data satisfy} $T_{\varepsilon 0} \in L^5(\Omega), \psi_{\varepsilon0} \in L^2(\Omega\times\mathbb{S}^2)$ \tcb{and assumption \eqref{eq:wellinitial}},  and the \tcb{postive boundary data satisfy} $T_b \in L_{\loc}^5([0,\infty);L^5(\partial \Omega))$ and $\psi_b \in L_{\loc}^2([0,\infty); L^2(\Sigma_-;|n\cdot\beta| d\beta d\sigma_x))$ \tcb{and assumption \eqref{eq:wellbc}}. Then the following statements hold.
\begin{enumerate}
	\item[(1)] \textbf{Existence of weak solution:} There exists a weak positive solution for the system \eqref{eq:Teps}-\eqref{eq:psieps} with initial conditions \eqref{eq:ic1}-\eqref{eq:ic2} and boundary condition \eqref{bpsi} for $\psi_\varepsilon$ with boundary condition \eqref{b1},\eqref{b3} or \eqref{b2} for $T_\varepsilon$.
	\item[(2)] \textbf{Diffusive limit}: As $\varepsilon \to 0$, the weak solution $(T_\varepsilon,\psi_\varepsilon)$ to the system \eqref{eq:Teps}-\eqref{eq:psieps} converges to  $(\overline{T},\overline{T}^4)$, where $\overline{T}$ is the weak solution of the system \eqref{hgm1.0} with boundary conditions that $\Omega = \mathbb{T}^3$ for the case (A) and $\overline{T}=T_b$ on the boundary for the case (B) and (C).
	\item [(3)]  \textbf{Rate of convergence}: Assume $\overline{T}$ is a strong solution to the system \eqref{hgm1.0} which has a positive lower bound. Then
	\begin{align*}
		\|T_\varepsilon(t)-\overline{T}(t)\|_{L^4(\Omega)}^4+\|\psi_\varepsilon(t)-\overline{\psi}(t)\|_{L^2(\Omega)}^2 \le C \|T_{\varepsilon 0}-\overline{T}_0\|_{L^4(\Omega)}^4+C\varepsilon^s,
	\end{align*} 
	where $s>0$ is a positive constant and takes the value $s=2,1,\min{(1,r)}$ for the case of boundary conditions \eqref{b1}, \eqref{b3}, \eqref{b2}, respectively.
\end{enumerate}
\end{theorem}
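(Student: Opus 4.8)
The three assertions all rest on one coercive estimate, so I would establish that first. \textbf{The basic a priori estimate.} Testing \eqref{eq:Teps} with $T_\varepsilon^4$ and \eqref{eq:psieps} with $\psi_\varepsilon$ and adding, the stiff terms combine into $-\varepsilon^{-2}\int_{\Omega\times\mathbb{S}^2}(\psi_\varepsilon-T_\varepsilon^4)^2$, while the diffusion gives $\int_\Omega\nabla(T_\varepsilon^4)\cdot\nabla T_\varepsilon=\tfrac{16}{25}\|\nabla(T_\varepsilon^{5/2})\|_{L^2}^2$; nonnegativity $T_\varepsilon,\psi_\varepsilon\ge0$, which makes these identities legitimate, follows from a maximum principle ($\psi_\varepsilon\ge0$ keeps $T_\varepsilon\ge0$ through the source $\langle\psi_\varepsilon\rangle\ge0$, and conversely). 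After absorbing the boundary contributions using \eqref{bpsi} and whichever of \eqref{b1}--\eqref{b2} is in force, this yields, uniformly in $\varepsilon$: $T_\varepsilon$ bounded in $L^\infty_tL^5_x$ with $T_\varepsilon^{5/2}$ bounded in $L^2_tH^1_x$ (hence $T_\varepsilon\in L^5_tL^{15}_x$), $\psi_\varepsilon$ bounded in $L^\infty_tL^2_{x,\beta}$, the dissipation $g_\varepsilon:=\varepsilon^{-1}(\psi_\varepsilon-T_\varepsilon^4)$ bounded in $L^2_{t,x,\beta}$, plus a bound on the traces of $\psi_\varepsilon$ on $\Sigma_\pm$.

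\textbf{(1) Existence.} I would build approximate solutions by an iteration scheme: given $\psi^{(k)}$, solve the (monotone, hence well posed) nonlinear parabolic problem for $T^{(k+1)}$ with reaction term $-\varepsilon^{-2}\big(4\pi (T^{(k+1)})^4-\langle\psi^{(k)}\rangle\big)$ and the relevant boundary condition, then solve the linear transport problem \eqref{eq:psieps}, \eqref{bpsi} for $\psi^{(k+1)}$ by the method of characteristics, using truncations to preserve $0\le T^{(k)},\psi^{(k)}$. The estimate above holds along the scheme uniformly in $k$; to pass to the limit one needs strong compactness of $T^{(k)}$ (Aubin--Lions: spatial compactness from $T^{(k)5/2}\in L^2_tH^1_x$, time compactness from the bound on $\partial_tT^{(k)}$ read off the equation) so that $(T^{(k)})^4\to T_\varepsilon^4$, together with compactness of $\langle\psi^{(k)}\rangle$ from the velocity-averaging lemma applied to \eqref{eq:psieps}. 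Passing the initial data \eqref{eq:ic1}--\eqref{eq:ic2} and the boundary conditions to the limit (via trace theory for transport equations) completes the construction.

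\textbf{(2) Diffusive limit.} Now the bounds are uniform in $\varepsilon$. From $\psi_\varepsilon=T_\varepsilon^4+\varepsilon g_\varepsilon$ and $\langle\beta\rangle=0$ the radiative flux is $\langle\beta\psi_\varepsilon\rangle=\varepsilon\langle\beta g_\varepsilon\rangle$; integrating \eqref{eq:psieps} over $\mathbb{S}^2$ and adding \eqref{eq:Teps} gives the conservative form $\partial_t(T_\varepsilon+\langle\psi_\varepsilon\rangle)=\Delta T_\varepsilon-\nabla\cdot\langle\beta g_\varepsilon\rangle$, whose right side is bounded in $L^2_tH^{-1}_x$. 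Since $\langle\psi_\varepsilon\rangle=4\pi T_\varepsilon^4+O(\varepsilon)$ and $s\mapsto s+4\pi s^4$ is strictly increasing, combining the spatial compactness of $T_\varepsilon$ with this time regularity yields, by an Aubin--Lions argument for a monotone nonlinearity (equivalently: the Young measure generated by $T_\varepsilon$ is a Dirac mass, by monotonicity), that $T_\varepsilon\to\overline T$ strongly in $L^p_{t,x}$ and a.e., and then $\psi_\varepsilon-T_\varepsilon^4=\varepsilon g_\varepsilon\to0$ forces $\psi_\varepsilon\to\overline T^4$. Testing \eqref{eq:psieps} against $\beta$ and using $\langle\beta\otimes\beta\rangle=\tfrac{4\pi}{3}\,\mathrm{Id}$, one gets $\langle\beta g_\varepsilon\rangle=-\tfrac{4\pi}{3}\nabla(T_\varepsilon^4)+o(1)$ distributionally, so passing to the limit in the conservative form produces \eqref{hgm1.0}; the boundary conditions for $\overline T$ are inherited ($\Omega=\mathbb{T}^3$ for (A); $\overline T=T_b$ for (B); for (C) with $r>0$, from $\varepsilon^r n\cdot\nabla T_\varepsilon=T_b-T_\varepsilon\to0$).

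\textbf{(3) Rate of convergence --- and the main obstacle.} Assume $\overline T$ is the strong solution of \eqref{hgm1.0} with $\overline T\ge c_0>0$. I would follow the relative-entropy (modulated-energy) method with the profile dictated by \eqref{eq:psiasymp}: set $\widetilde\psi_\varepsilon:=\overline T^4-\varepsilon\,\beta\cdot\nabla(\overline T^4)+\varepsilon^2\psi^{(2)}$ with $\psi^{(2)}:=-\partial_t\overline T^4+\beta\cdot\nabla(\beta\cdot\nabla\overline T^4)$; thanks to \eqref{hgm1.0}, the pair $(\overline T,\widetilde\psi_\varepsilon)$ solves \eqref{eq:Teps} exactly and \eqref{eq:psieps} up to a residual $R_\varepsilon=O(\varepsilon)$ of the special form $\varepsilon\,\beta\cdot\nabla(\text{smooth})+\varepsilon^2(\text{smooth})$. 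One then studies
\[
\mathcal E_\varepsilon(t):=\int_\Omega\Big(\tfrac15 T_\varepsilon^5-\tfrac15\overline T^5-\overline T^4(T_\varepsilon-\overline T)\Big)dx+\tfrac12\int_{\Omega\times\mathbb{S}^2}|\psi_\varepsilon-\widetilde\psi_\varepsilon|^2\,d\beta\,dx,
\]
which is coercive, $\mathcal E_\varepsilon\ge c(c_0)\big(\|T_\varepsilon-\overline T\|_{L^4(\Omega)}^4+\|\psi_\varepsilon-\overline T^4\|_{L^2}^2\big)-C\varepsilon^2$, and which also controls $\|T_\varepsilon^4-\overline T^4\|_{L^2}^2$ up to $O(\varepsilon^2)$. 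Differentiating $\mathcal E_\varepsilon$ and using the four equations, the $\varepsilon^{-2}$ terms telescope to $-\varepsilon^{-2}\int\langle(a-b)^2\rangle\le0$ (with $a=\psi_\varepsilon-T_\varepsilon^4$, $b=\widetilde\psi_\varepsilon-\overline T^4$, $\langle b\rangle=\varepsilon^2\langle\psi^{(2)}\rangle$) plus an $O(1)$ remainder; the diffusion produces $-4\int T_\varepsilon^3|\nabla(T_\varepsilon-\overline T)|^2\le0$ plus a cross term; the transport produces boundary integrals on $\Sigma_\pm$; and $R_\varepsilon$ contributes $\int R_\varepsilon(\psi_\varepsilon-\widetilde\psi_\varepsilon)$. \emph{The main obstacle is to show that, after reorganizing these remainders, every $O(1)$ term is in fact $\le C\mathcal E_\varepsilon$ and all the rest is $O(\varepsilon^s)$.} The mechanism is that each $O(1)$ remainder vanishes at $T_\varepsilon=\overline T$ (it must, since $\mathcal E_\varepsilon$ is minimized there): e.g. the terms $\int(T_\varepsilon^4-\overline T^4)\partial_t\overline T$ and $-\int\partial_t(\overline T^4)(T_\varepsilon-\overline T)$ combine to $\int\partial_t\overline T\big[(T_\varepsilon^4-\overline T^4)-4\overline T^3(T_\varepsilon-\overline T)\big]$ and the bracket is bounded by $\tfrac{C}{c_0}$ times the Bregman integrand of $\mathcal E_\varepsilon$ --- exactly where $\overline T\ge c_0$ is used; the nonlinear diffusion cross term is absorbed by Young's inequality against $-4\int T_\varepsilon^3|\nabla(T_\varepsilon-\overline T)|^2$ using $\overline T\ge c_0$, the smoothness of $\overline T$, and the uniform $L^\infty_tL^5_x\cap L^5_tL^{15}_x$ bounds; and, crucially, in $\int R_\varepsilon(\psi_\varepsilon-\widetilde\psi_\varepsilon)$ the $\varepsilon$-order part of $R_\varepsilon$ integrates to zero against the $\beta$-independent factor $T_\varepsilon^4-\overline T^4$ because the first and third velocity moments of $\beta$ vanish, leaving an $O(\varepsilon^2)$ error. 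The boundary terms fix $s$: none on $\mathbb{T}^3$, so $s=2$; for (B) the temperature boundary term vanishes since $T_\varepsilon=\overline T=T_b$ on $\partial\Omega$, while the radiative term on $\Sigma_-$ is dominated by its $\Sigma_+$ counterpart through the reflection $\beta\mapsto\beta'$ (which preserves $|\beta\cdot n|$) and the strict contraction $(1-\alpha)^2<1$, up to an $O(\varepsilon)$ error coming from the $O(\varepsilon)$ mismatch of $\widetilde\psi_\varepsilon$ with \eqref{bpsi} --- here the well-prepared hypothesis $\psi_b=T_b^4$ is essential --- so $s=1$; for (C) the same radiative term gives $O(\varepsilon)$ while the Robin condition $\varepsilon^r n\cdot\nabla T_\varepsilon=T_b-T_\varepsilon$ turns the temperature boundary term into a favorable term plus an $O(\varepsilon^r)$ error, so $s=\min(1,r)$. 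A Gronwall inequality $\tfrac{d}{dt}\mathcal E_\varepsilon\le C\mathcal E_\varepsilon+C\varepsilon^s$ together with $\mathcal E_\varepsilon(0)\le C\|T_{\varepsilon0}-\overline T_0\|_{L^4}^4+C\varepsilon^2$ (using \eqref{eq:wellinitial}) then gives the stated bound. A last technical point is to justify the time differentiation of $\mathcal E_\varepsilon$ for merely weak solutions, by a time regularization or by arguing directly from the weak formulation.
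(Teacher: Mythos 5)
Your parts (2) and (3) are, up to reorganization, the paper's own arguments. For the limit you use exactly the paper's mechanism: the moment equation $\partial_t(T_\varepsilon+\langle\psi_\varepsilon\rangle)=\Delta T_\varepsilon-\nabla\cdot\langle\beta g_\varepsilon\rangle$ for time regularity, the $L^2_tH^1_x$ bound on $T_\varepsilon^{5/2}$ for spatial regularity, a product-limit/monotonicity (Young measure is a Dirac mass) argument for a.e.\ convergence of $T_\varepsilon$, then $\psi_\varepsilon-T_\varepsilon^4=\varepsilon g_\varepsilon\to0$ and the second-moment identity $\langle\beta\otimes\beta\rangle=\tfrac{4\pi}{3}I$ to close \eqref{hgm1.0}; this is the content of Theorem \ref{LimitProof} (your claim that the right-hand side is bounded in $L^2_tH^{-1}_x$ is slightly off, since only $T_\varepsilon^{5/2}\in L^2_tH^1$, but any negative space works). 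For the rate you choose the same corrector $\widetilde\psi_\varepsilon=\overline{T}^4-\varepsilon\beta\cdot\nabla\overline{T}^4-\varepsilon^2\partial_t\overline{T}^4+\varepsilon^2\beta\cdot\nabla(\beta\cdot\nabla\overline{T}^4)$, the same Bregman relative entropy, the same use of $\overline{T}\ge c_0$ for coercivity and for absorbing the $O(1)$ remainders, and the same boundary bookkeeping (well-prepared $\psi_b=T_b^4$, reflection contraction $(1-\alpha)^2<1$, Robin dissipation) giving $s=2,1,\min(1,r)$; your odd-moment cancellation in $\int R_\varepsilon(\psi_\varepsilon-\widetilde\psi_\varepsilon)$ is unnecessary (plain Cauchy--Schwarz already yields $C\varepsilon^2$ since $\overline{R}=O(\varepsilon)$ in $L^2$), and your diffusion bookkeeping via $-4\int T_\varepsilon^3|\nabla(T_\varepsilon-\overline{T})|^2$ is a harmless variant of the paper's completion of the square in the variable $T^{5/2}$.

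The genuine weak point is part (1), where you replace the paper's Galerkin construction by a decoupled iteration. The uniform energy estimate you rely on comes from the exact cancellation of the stiff coupling: testing with $T^4$ and $\psi$ only produces the dissipation $-\varepsilon^{-2}\int\langle(\psi-T^4)^2\rangle$ when both equations carry the \emph{same} unknowns. Along your scheme the terms are $\varepsilon^{-2}\int (T^{(k+1)})^4\bigl(\langle\psi^{(k)}\rangle-4\pi(T^{(k+1)})^4\bigr)$ and $-\varepsilon^{-2}\int \psi^{(k+1)}\bigl(\psi^{(k+1)}-(T^{(k+1)})^4\bigr)$, which do not combine into a negative square; the claimed bound "uniformly in $k$" (let alone uniformly in $\varepsilon$) is therefore not automatic, and since the $\varepsilon$-independent energy inequality \eqref{eq:energythm1} with the $\varepsilon^{-2}$ dissipation is the backbone of your parts (2) and (3), you cannot simply assert it formally for the weak solution either: the paper obtains it precisely because its simultaneous Galerkin truncation (with self-adjoint projections $\mathbb{P}_m$, $\mathbb{Q}_m$ and lifted boundary data) preserves the cancellation at every level $m$, and the inequality then passes to the limit by lower semicontinuity. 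You would need either to keep the two equations coupled in the approximation or to justify the energy inequality for the limit by a separate (renormalization/regularization) argument; likewise the maximum principle you invoke for $T_\varepsilon,\psi_\varepsilon\ge0$ is not established at this regularity and is not needed in the paper's scheme. Two smaller gaps: for the Robin case your recovery of $\overline{T}=T_b$ should come from the boundary dissipation term $\varepsilon^{-r}\int_0^t\|\gamma^1T_\varepsilon-T_b\|^5_{L^5(\partial\Omega)}\,d\tau\le C$ in \eqref{eq:energyrobin}, not from a pointwise statement about $n\cdot\nabla T_\varepsilon$, and the case $r=0$ (handled in the paper through the trace of $\overline{\psi}$ and $\gamma^2\overline{\psi}=T_b^4$) is missing from your sketch.
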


%
%
%

The main contribution of the present work is to give a more rigorous study of the radiative heat transfer system and its diffusive limit. We prove the global existence of weak solutions for the system and the convergence of the weak solutions to a nonlinear diffusion model under the diffusive limit. Our work extend the analysis made by Klar and Schmeiser in \cite{klar2001numerical}, where the existence and diffusive limit were established for smooth solutions. In their work, some extra assumptions on the solutions (which are not known to hold) were needed. Here we do not need these assumptions. The major difficulties in our work lie in the nonlinearity and lack of compactness of the system \eqref{eq:Teps}-\eqref{eq:psieps}. To overcome the difficulties, we use Young measure theory and averaging lemmas. The Young measure is applied to deal with the nonlinearity and the averaging lemma is applied to get the compactness. The diffusive limit can thus be rigorously justified. Assuming additional regularity on the limit system, the relative entropy method can be used to give the rate of convergence for the diffusive limit \tcb{for the boundary conditions \eqref{b1}, \eqref{b3} and \eqref{b2} with $r>0$. The case $r=0$ can only be treated using the weak compactness method.}

A lot of literature is devoted to the mathematical analysis and numerical computations of the radiative heat transfer system \cite{larsen2002simplified, asllanaj2004transient, ghattassi2016galerkin,pinnau2007analysis,chebotarev2016nondegeneracy,ghattassi2018reduced}. Besides the work \cite{klar2001numerical} on the same model considered here, there are some works on similar models \cite{porzio2004application,golse2008rosseland,amosov2017unique,amosov2016unique, ghattassi2018existence}. For example, the existence and uniqueness of strong solutions for the non-grey coupled convection-conduction radiation system were proved in \cite{porzio2004application} using accretive operators theory. In \cite{golse2008rosseland}, the authors discussed the existence of weak solutions for a grey radiative transfer system without diffusion term in the temperature equation in a bounded domain with non-homogeneous Dirichlet  boundary conditions. They supposed that the radiative boundary data do not depend on the direction in order to avoid the boundary layer. The main tools used to prove the existence of weak solutions are the compactness argument based on a maximum principle and velocity averaging lemma. Furthermore, the existence and uniqueness of weak solution for the stationary nonlinear heat equation and the integro-differential radiative transfer equation for semitransparent bodies were studied in \cite{amosov2017unique}, where the authors took into account the effects of reflection and refraction of radiation according to the Fresnel laws at the boundaries of bodies. More recently, in  \cite{ghattassi2018existence}, the authors proved the local existence and uniqueness of strong solutions for the radiative heat transfer system under different types of  boundary conditions by using the Banach fixed point theorem. The time derivative term in the radiative transfer equation \eqref{eq:psieps} was also neglected therein. \tcb{Besides analysis, there are a lot of literature on the applications on the radiative transfer models. For example, a radiative transfer system without thermal diffusion was used in \cite{bardos2021radiative} to study the greenhouse effect, where the authors found the model was not capable of explaining the greenhouse effect due to the greenhouse gases. Note here our model \eqref{eq:Teps}-\eqref{eq:psieps} include thermal diffusion. A more complicated model including thermal diffusion and coupled with fluid equations was studied in \cite{golse2021stratified} to model greenhouse effects.}

The diffusion limit in radiative heat transfer system can be studied via Rosseland approximations \cite{bardos1987rosseland,bardos1988nonaccretive}. In \cite{bardos1987rosseland}, the authors derived the Rosseland approximation on a different radiative transfer equation where the solution also depends on the frequency variable $\nu$. Using the so-called Hilbert's expansion method, they proved the strong convergence of the solution of the radiative transfer equation to the solution of the Rosseland equation  for well prepared boundary data. Then, in \cite{bardos1988nonaccretive},  under some weak hypotheses on the various parameters of the radiative transfer equation, the Rosseland approximation was proved in a weak sense. More recently, in \cite{debussche2015diffusion,debussche2016diffusion}, the authors studied the diffusive limit of a stochastic kinetic radiative transfer equation, which is nonlinear and includes a smooth random term. They used a stochastic averaging lemma to show the convergence in distribution to a stochastic nonlinear fluid model. Moreover, there exists a wide literature on the diffusion limits for other kinds of kinetic systems, with various viewpoints and applications \cite{diperna1979uniqueness,dafermos1979stability,dafermos1979second,carrillo2001entropy,masmoudi2007diffusion,dolbeault2007non,saint2009hydrodynamic,el2010diffusion,lattanzio2013relative}. For example, in \cite{masmoudi2007diffusion}, the authors studied the diffusive limit of a semiconductor Boltzmann-Poisson system. The method of moments and a velocity averaging lemma were used to prove the convergence of its renormalized solution towards a global weak solution of a drift-diffusion-Poisson model. Similar methods have been used to study the hydrodynamic limit of Boltzmann equation \cite{saint2009hydrodynamic,lions2001boltzmanna,lions2001boltzmannb}. The hydrodynamic limit of Boltzmann equation can also be studied using the relative entropy method, for example to show the incompressible limit to Euler and Navier-Stokes equations \cite{saint2009hydrodynamic}. The origins of the relative entropy method come from continuum  mechanics, see \cite{dafermos1979second} for more details. The principle of this method is to measure in a certain way the distance between two solutions in some given space. This method was also used in the stability and asymptotic limit for different type of PDEs, for instance see \cite{el2010asymptotic,demoulini2012weak,lattanzio2013relative,tzavaras2005relative}.

The paper is organized as follows. In the next section, the Galerkin approximation is used to show the existence of global weak solution of the radiative heat transfer system. Then we prove the convergence of the weak solutions to a nonlinear parabolic equation \eqref{hgm1.0} in the diffusive limit in Section \ref{section3}, by using the averaging lemma and the theory of Young measures. Moreover, we recover the boundary condition for the nonlinear parabolic limit equation by using trace theorems. In Section \ref{section4}, we give the convergence rate of the diffusive limit by using the relative entropy method.

{\bf Notations: } In this paper, we use $\|\cdot\|_{L^{p}}$ to denote the natural norm on  $L^{p}(\Omega)$, for $p\in[1,\infty]$ and $\|\cdot\|_{H^{s}}$ is the norm on the sobolev space $H^{s}(\Omega)$, $s>0$. We use $\langle \cdot \rangle$ to denote the integral over $\beta \in \mathbb{S}^2$. $C_{t,x}$ is the space of continuous functions in time and space.


\section{Global existence of weak solutions}\label{section2}
In this section we prove the global existence of weak solutions for the radiative heat transfer system \eqref{eq:Teps}-\eqref{eq:psieps} under three different boundary conditions: torus, nonhomogeneous Dirichlet condition and Robin condition. We first consider the case of torus, i.e. $\Omega=\mathbb{T}^3$.


\subsection{The case of torus}
We first prove the existence theorem for the case of torus. The idea of the proof can be modified to deal with bounded domain, which will be done later in this section. Before stating the existence theorem, we first introduce the definition of weak solutions.
\begin{definition}\label{df1}
	\tcr{ Let $0 \le T_{\varepsilon0} \in L^5(\mathbb{T}^3)$ and $0\le\psi_{\varepsilon0} \in L^2(\mathbb{T}^3 \times \mathbb{S}^2)$. We say that $(T_\varepsilon,\psi_\varepsilon)$ is a nonnegative weak solution of the system \eqref{eq:Teps}-\eqref{eq:psieps} with initial conditions \eqref{eq:ic1}-\eqref{eq:ic2} if }
	\begin{align}\label{eq:funsp}
		&T_\varepsilon \in L^\infty (0,\infty;L^5(\mathbb{T}^3))\cap C_w([0,\infty);L^5(\mathbb{T}^3)),\;\nabla T_\varepsilon^{\frac{5}{2}} \in L^2([0,\infty);L^2(\mathbb{T}^3)),\\
		&\psi_\varepsilon \in L^\infty(0,\infty; L^2(\mathbb{T}^3 \times \mathbb{S}^2))\cap C_w([0,\infty);L^2(\mathbb{T}^3\times\mathbb{S}^2)),
	\end{align}
	and it solves \eqref{eq:Teps}-\eqref{eq:psieps} in the sense of distributions, i.e., for any test functions $\varphi  \in C^\infty([0,\infty)\times\mathbb{T}^3)$ and $\rho\in C^\infty([0,\infty)\times\mathbb{T}^3 \times \mathbb{S}^2)$, the following equations hold:
	\begin{align}
		&-\iint_{[0,\infty)\times\mathbb{T}^3}  \left(T_\varepsilon\partial_t \varphi + T_\varepsilon \Delta \varphi + \frac{1}{\varepsilon^2}\int_{\mathbb{S}^2} \varphi(\psi_\varepsilon - T_\varepsilon^4) d\beta \right) dxdt = \int_{\mathbb{T}^3} T_{\varepsilon0} \varphi(0,\cdot)dx, \label{eq:weakt1}\\
		&-\iiint_{[0,\infty)\times\mathbb{T}^3 \times \mathbb{S}^2} \left(\psi_\varepsilon \partial_t \rho + \frac{1}{\varepsilon} \psi_\varepsilon \beta \cdot \nabla \rho - \frac{1}{\varepsilon^2} \rho(\psi_\varepsilon-T_\varepsilon^4)\right) d\beta dxdt \nonumber\\
		&\quad= \iint_{\mathbb{T}^3 \times \mathbb{S}^2} \psi_{\varepsilon0}\rho(0,\cdot) dx. \label{eq:weakt2}
	\end{align}
\end{definition}

Next we prove the following existence theorem: 
\begin{theorem}\label{thm:existencet}
\tcr{ Let $0 \le T_{\varepsilon0} \in L^5(\mathbb{T}^3)$ and $0\le \psi_{\varepsilon0} \in L^2(\mathbb{T}^3 \times \mathbb{S}^2)$. }Then there exists a global nonnegative weak solution $(T_\varepsilon,\psi_\varepsilon)$ to the system \eqref{eq:Teps}-\eqref{eq:psieps} with initial data \eqref{eq:ic1}-\eqref{eq:ic2}. Moreover the following energy inequality holds for all $t>0$:
	\begin{align}
		&\frac{1}{5}\|T_\varepsilon(t,\cdot)\|_{L^5(\mathbb{T}^3)}^5 + \frac{1}{2}\|\psi_\varepsilon(t,\cdot,\cdot)\|_{L^2(\mathbb{T}^3\times \mathbb{S}^2)}^2 + \frac{16}{25}\int_0^t \|\nabla T_\varepsilon^{\frac{5}{2}}(\tau,\cdot)\|_{L^2}^2 d\tau \nonumber \\
		&\quad\quad+ \frac{1}{\varepsilon^2} \int_0^t \|\psi_\varepsilon(\tau,\cdot,\cdot) - T_\varepsilon^4(\tau,\cdot)\|_{L^2(\mathbb{T}^3 \times \mathbb{S}^2)}^2 d\tau \nonumber \\
		&\quad \le \frac{1}{5} \|T_{\varepsilon0}\|_{L^5(\mathbb{T}^3)}^5 + \frac{1}{2} \|\psi_{\varepsilon0}\|_{L^2(\mathbb{T}^3\times \mathbb{S}^2)}^2. \label{eq:energythm1}
	\end{align}
\end{theorem}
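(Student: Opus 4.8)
## Proof proposal

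The plan is to construct the weak solution via a Galerkin approximation adapted to the coupled structure: expand $T_\varepsilon$ in a basis of eigenfunctions of the Laplacian on $\mathbb{T}^3$, but because the transport equation \eqref{eq:psieps} has no smoothing, approximate $\psi_\varepsilon$ either by the same truncation in $x$ together with a suitable discretization/projection in $\beta$, or — more robustly — treat the $\psi$-equation by the method of characteristics for each fixed finite-dimensional $T$. Concretely, I would fix $N$, let $P_N$ be the projection onto the span of the first $N$ Laplacian eigenfunctions, and solve the ODE--PDE system: $\partial_t T^N = \Delta T^N + \frac{1}{\varepsilon^2} P_N\langle \psi^N - (T^N)^4\rangle$ coupled with $\partial_t \psi^N + \frac{1}{\varepsilon}\beta\cdot\nabla\psi^N = -\frac{1}{\varepsilon^2}(\psi^N - (T^N)^4)$. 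Local existence of the approximate system follows from the Cauchy--Lipschitz theorem once one checks that the right-hand side is locally Lipschitz in the relevant finite-dimensional/$L^2$ norms; the quartic nonlinearity $(T^N)^4$ is locally Lipschitz on bounded sets, so this is routine.

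The core of the argument is the a priori estimate, which must be performed at the level of the approximation and must reproduce \eqref{eq:energythm1}. I would multiply \eqref{eq:psieps} by $\psi_\varepsilon$ and integrate over $\mathbb{T}^3\times\mathbb{S}^2$: the transport term $\frac{1}{\varepsilon}\iint \beta\cdot\nabla\psi_\varepsilon\,\psi_\varepsilon = \frac{1}{2\varepsilon}\iint \beta\cdot\nabla(\psi_\varepsilon^2)$ vanishes by periodicity, giving $\frac{1}{2}\frac{d}{dt}\|\psi_\varepsilon\|_{L^2}^2 = -\frac{1}{\varepsilon^2}\iint \psi_\varepsilon(\psi_\varepsilon - T_\varepsilon^4)$. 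Next I would multiply \eqref{eq:Teps} by $T_\varepsilon^4$ and integrate: the diffusion term produces $-\int \nabla T_\varepsilon\cdot\nabla T_\varepsilon^4 = -4\int T_\varepsilon^3|\nabla T_\varepsilon|^2 = -\frac{16}{25}\|\nabla T_\varepsilon^{5/2}\|_{L^2}^2$ (using $\nabla T_\varepsilon^{5/2} = \frac{5}{2}T_\varepsilon^{3/2}\nabla T_\varepsilon$), and the reaction term produces $\frac{1}{\varepsilon^2}\int T_\varepsilon^4\langle\psi_\varepsilon - T_\varepsilon^4\rangle$. The left-hand time derivative is $\frac{1}{5}\frac{d}{dt}\|T_\varepsilon\|_{L^5}^5$. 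Adding the two identities, the coupling terms combine into $-\frac{1}{\varepsilon^2}\iint (\psi_\varepsilon - T_\varepsilon^4)^2\,d\beta dx \le 0$, since $\iint \psi_\varepsilon(\psi_\varepsilon-T_\varepsilon^4) - \int T_\varepsilon^4\langle\psi_\varepsilon-T_\varepsilon^4\rangle = \iint(\psi_\varepsilon - T_\varepsilon^4)^2$. Integrating in time yields exactly \eqref{eq:energythm1}. One subtlety: the test function $T_\varepsilon^4$ is not admissible directly for the weak formulation, so at the Galerkin level one instead tests with $P_N((T^N)^4)$ and must check the commutator terms vanish or are controlled; alternatively one uses that $T^N$ is smooth in $x$ for each $N$ so the manipulation is legitimate, and the estimates are uniform in $N$. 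I also need $T_\varepsilon \ge 0$ (to make sense of $T_\varepsilon^{5/2}$ and $T_\varepsilon^4$ as real powers) — this should follow from a maximum principle argument on the approximate system, using that $\psi^N \ge 0$ is propagated by the transport equation when the data are nonnegative, which in turn keeps the source $(T^N)^4 \ge 0$ for $T^N$; I would need to be slightly careful since the $L^5$ data need not be sign-definite, so possibly one first truncates the data to be nonnegative, or works with $|T_\varepsilon|$ and $T_\varepsilon|T_\varepsilon|^3$ throughout.

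With the uniform bounds $T^N$ bounded in $L^\infty_t L^5_x$, $\nabla (T^N)^{5/2}$ bounded in $L^2_{t,x}$, $\psi^N$ bounded in $L^\infty_t L^2_{x,\beta}$, and $\frac{1}{\varepsilon}\|\psi^N - (T^N)^4\|_{L^2_{t,x,\beta}}$ bounded, I would pass to the limit $N\to\infty$. Weak-$*$ compactness gives limits $T_\varepsilon, \psi_\varepsilon$ and a limit $\chi$ for $(T^N)^4$; strong compactness of $T^N$ in, say, $L^p_{loc}$ for $p<5$ is obtained from the bound on $\nabla (T^N)^{5/2}$ in $L^2_x$ plus a bound on $\partial_t (T^N)^{5/2}$ (or on $\partial_t T^N$ in a negative Sobolev space) via Aubin--Lions, which identifies $\chi = T_\varepsilon^4$. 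For $\psi^N$ there is no spatial compactness, but since the equation for $\psi_\varepsilon$ is linear in $\psi_\varepsilon$ given $T_\varepsilon^4$, weak convergence suffices to pass to the limit in \eqref{eq:weakt2} once $(T^N)^4 \to T_\varepsilon^4$ strongly — no averaging lemma is needed here (that tool is reserved for the diffusive limit $\varepsilon\to 0$ in the next section). The time-continuity $T_\varepsilon\in C_w([0,\infty);L^5)$ and $\psi_\varepsilon\in C_w([0,\infty);L^2)$ follows from the equations holding distributionally plus the uniform bounds, by a standard Lions--Magenes-type argument, and the attainment of initial data is read off the weak formulation. The main obstacle I anticipate is the legitimacy of the energy estimate at the Galerkin level — reconciling the need to test the temperature equation with the non-polynomial quantity $T_\varepsilon^4$ against the finite-dimensional projection, while keeping all constants (in particular the sharp $\frac{16}{25}$ and the exact cancellation producing $\|\psi_\varepsilon - T_\varepsilon^4\|^2$) independent of $N$; the nonnegativity/regularization of the temperature data is a secondary but necessary technical point.
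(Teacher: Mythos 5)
Your outline follows essentially the same route as the paper: a spectral Galerkin scheme on the torus (Fourier/Laplacian eigenmode projection for $T$, the transport equation solved along characteristics), the energy multipliers $T_\varepsilon^4$ and $\psi_\varepsilon$ with the cancellation $\iint \psi(\psi-T^4)-\iint T^4(\psi-T^4)=\iint(\psi-T^4)^2$ and the constant $\tfrac{16}{25}$, then uniform bounds, Aubin--Lions-type compactness for $T^N$ to identify $(T^N)^4\to T_\varepsilon^4$, weak convergence for $\psi^N$, and weak time-continuity. The one genuine gap is precisely the point you flag but do not resolve: with your scheme, where the source $(T^N)^4$ is projected only in the $T$-equation and left unprojected in the $\psi$-equation, the coupling terms sum to
\begin{align*}
-\frac{1}{\varepsilon^2}\iint\bigl(\psi^N-P_N((T^N)^4)\bigr)\bigl(\psi^N-(T^N)^4\bigr)\,d\beta\,dx,
\end{align*}
which is not a perfect square; the cross term $\frac{1}{\varepsilon^2}\iint(\psi^N-P_N((T^N)^4))\,(I-P_N)((T^N)^4)$ does not vanish, and absorbing it costs either $\varepsilon$-dependent Gronwall factors or a loss in the sharp constants, so \eqref{eq:energythm1} would not come out clean and uniform in $N$ as stated.

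The paper's fix is the small but essential modification you should adopt: put the projection $\mathbb{P}_m((T_\varepsilon^m)^4)$ in \emph{both} equations, as in \eqref{eq:gl1}--\eqref{eq:gl2}, and project the initial data. Since $\mathbb{P}_m$ commutes with $\beta\cdot\nabla$ and the source lies in its range, $\psi_\varepsilon^m$ (and $T_\varepsilon^m$) stay in the range of $\mathbb{P}_m$ for all time; self-adjointness of $\mathbb{P}_m$ then turns the coupling into exactly $-\frac{1}{\varepsilon^2}\|\psi_\varepsilon^m-\mathbb{P}_m((T_\varepsilon^m)^4)\|_{L^2}^2$, giving the energy identity with the exact constants, uniformly in $m$. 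In the limit, \eqref{eq:energythm1} is recovered by weak lower semicontinuity, using that $\mathbb{P}_m((T_\varepsilon^m)^4)-(T_\varepsilon^m)^4\to 0$ so that $\psi_\varepsilon^m-\mathbb{P}_m((T_\varepsilon^m)^4)\rightharpoonup \psi_\varepsilon-T_\varepsilon^4$ — a step your write-up also leaves implicit and should state. Your concern about nonnegativity of $T_\varepsilon$ is reasonable but is not addressed in the paper either (powers such as $T^{5/2}$ are used without comment), so handling it by truncation or by writing $T|T|^3$, $|T|^{3/2}T$ is a sensible supplementary remark rather than a deviation.
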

\begin{proof}

To prove Theorem \ref{thm:existencet}, we construct an approximate system using Galerkin approximations in finite dimensions, and then show the system converges as the dimension goes to infinity with the limit satisfying \eqref{eq:weakt1}-\eqref{eq:weakt2}.

\textbf{Construction of a Galerkin approximate system.} We first construct a finite dimensional approximations to the system \eqref{eq:Teps}-\eqref{eq:psieps} using Fourier series. We take the Fourier series of a $L^s$ ($s\ge1$) function to be 
	\[f(x) = \sum_{k \in \mathbb{Z}^d} \hat{f}(k) e^{ik\cdot x},\]
	and define the operator $\mathbb{P}_m:L^s \mapsto L^s$ ($s \ge 1$) as 
	\[\mathbb{P}_m f(x) = \sum_{|k| \le m} \hat{f}(k) e^{ik\cdot x}.\]
	Notice that $\mathbb{P}_m$ commutes with derivatives and convolutions. For function $g=g(x,\beta)$ defined on $\mathbb{T}^3\times \mathbb{S}^2$,
	\[\mathbb{P}_m g(x,\beta) =\sum_{|k| \le m} \hat{g}(k,\beta) e^{ik\cdot x}.\]
	We take the $m$-th Galerkin approximate system to be 
	\begin{align}
		\partial_t T_\varepsilon^m =~& \Delta T_\varepsilon^m + \frac{1}{\varepsilon^2} \int_{\mathbb{S}^2} \left(\psi_{\varepsilon}^m - \mathbb{P}_m\left((T_\varepsilon^m)^4\right)\right) d\beta, \label{eq:gl1}\\
		\partial_t \psi_\varepsilon^m + \frac{1}{\varepsilon} \beta \cdot \nabla \psi_\varepsilon^m =~& -\frac{1}{\varepsilon^2}\left(\psi_\varepsilon^m - \mathbb{P}_m\left((T_\varepsilon^m)^4\right)\right). \label{eq:gl2}
	\end{align}
The initial data is taken to be
\begin{align*}
	T_\varepsilon^m \big|_{t=0} = \mathbb{P}_mT_{\varepsilon0},\quad \psi_{\varepsilon}^m \big|_{t=0} = \mathbb{P}_m \psi_{\varepsilon0}.
\end{align*}
We make a change of variable $\xi=x-\frac{1}{\varepsilon}\beta t$ and equation \eqref{eq:gl2} changes into 
\[\frac{d}{dt} \psi_\varepsilon^m (t,\xi)= -\frac{1}{\varepsilon^2}\left(\psi_\varepsilon^m(t,\xi)- \mathbb{P}_m\left((T_\varepsilon^m)^4\right)\right),\]
which is an ODE in finite dimensional space. Since \eqref{eq:gl1} is also an ODE in  finite dimensional space, the system \eqref{eq:gl1}-\eqref{eq:gl2} has a unique solution $(T_\varepsilon^m,\psi_\varepsilon^m)$ on a maximal time interval $t_m$, according to the Cauchy-Lipschitz  theorem. The maximal existence time $t_m$ is characterized by
\begin{align*}
	\lim_{t\to t_m^{-}}\sup \left(\|T_\varepsilon^m\|_{L^5(\mathbb{T}^3)}^5 + \|\psi_\varepsilon^m\|_{L^2(\mathbb{T}^3\times\mathbb{S}^2)}^2 \right) = \infty.
\end{align*}
As will see next the norms above are bounded uniformly in time and so the Galerkin approximate system \eqref{eq:gl1}-\eqref{eq:gl2} is globally well-posed. \tcb{Note that the solutions to \eqref{eq:gl1}-\eqref{eq:gl2} are non-negative, which is proved in Appendix A.}

\textbf{Uniform estimate of the Galerkin system.}
Next we derive the energy estimate of the system \eqref{eq:gl1}-\eqref{eq:gl2}. Multiplying \eqref{eq:gl1} by $(T_\varepsilon^m)^4$ and \eqref{eq:gl2} by $\psi_\varepsilon^m$, adding the results together, and using the fact that $\mathbb{P}_m$ is a self-adjoint operator, we obtain
\begin{align*}
	\frac{d}{dt} &\left(\frac{1}{5} \|T_\varepsilon^m\|_{L^5(\mathbb{T}^3)}^5 + \frac{1}{2} \|\psi_\varepsilon^m\|_{L^2(\mathbb{T}^3 \times \mathbb{S}^2)}^2\right) + \frac{16}{25} \|\nabla (T_{\varepsilon}^m)^{\frac{5}{2}}\|_{L^2(\mathbb{T}^3)}^2 \\
	&+ \frac{1}{\varepsilon^2} \|\psi_\varepsilon^m - \mathbb{P}_m((T_\varepsilon^m)^4)\|_{L^2(\mathbb{T}^3\times \mathbb{S}^2)}^2  = \frac{1}{5} \|\mathbb{P}_mT_{\varepsilon0}\|_{L^5(\mathbb{T}^3)}^5 + \frac{1}{2} \|\mathbb{P}_m\psi_{\varepsilon0}\|_{L^2(\mathbb{T}^3 \times \mathbb{S}^2)}^2.
\end{align*}
Integrating it over $[0,t]$ and using the fact that $\|\mathbb{P}_m f\|_{L^s} \le \|f\|_{L^s}$, we obtain the energy inequality
\begin{align}
		&\frac{1}{5} \|T_\varepsilon^m(t)\|_{L^5(\mathbb{T}^3)}^5 + \frac{1}{2} \|\psi_\varepsilon^m(t)\|_{L^2(\mathbb{T}^3 \times \mathbb{S}^2)}^2+ \frac{16}{25} \int_0^t \|\nabla (T_{\varepsilon}^m)^{\frac{5}{2}}(\tau)\|_{L^2(\mathbb{T}^3)}^2d\tau \nonumber\\
		&\quad \quad + \frac{1}{\varepsilon^2}\int_0^t \|\psi_\varepsilon^m - \mathbb{P}_m((T_\varepsilon^m)^4)(\tau)\|_{L^2(\mathbb{T}^3\times \mathbb{S}^2)}^2 d\tau \nonumber \\
	&\quad \le \frac{1}{5} \|T_{\varepsilon0}\|_{L^5(\mathbb{T}^3)}^5 + \frac{1}{2} \|\psi_{\varepsilon0}\|_{L^2(\mathbb{T}^3 \times \mathbb{S}^2)}^2, \label{eq:enegl}
\end{align}
for all $t>0$.

It follows from the above energy inequality that, up to a subsequence
\begin{align}
	&\{T_\varepsilon^m\}_{m>0} \text{ is uniformly bounded in } L^\infty([0,\infty);L^5(\mathbb{T}^3)) , \label{eq:unibd1} \\
	&\{\nabla (T_\varepsilon^m)^{\frac{5}{2}}\}_{m>0} \text{ is uniformly bounded in } L^2([0,\infty);L^2(\mathbb{T}^3)), \label{eq:unibd2}\\
	&\{\psi_\varepsilon^m\}_{m>0} \text{ is uniformly bounded in } L^\infty([0,\infty);L^2(\mathbb{T}^3\times \mathbb{S}^2)), \label{eq:unibd3}\\
	&\left\{\frac{1}{\varepsilon}\left(\psi_\varepsilon^m - \mathbb{P}_m((T_\varepsilon^m)^4)\right) \right\}_{m>0}\text{ is uniformly bounded in } L^2([0,\infty);L^2(\mathbb{T}^3\times \mathbb{S}^2)). \label{eq:unibd4}
\end{align}
Using \eqref{eq:unibd2} and the Sobolev inequality
\[\|(T_\varepsilon^m)^{\frac{5}{2}} \|_{L^6(\mathbb{T}^3)} \le C \|\nabla (T_{\varepsilon}^m)^{\frac{5}{2}}\|_{L^2} + C \|(T_\varepsilon^m)^{\frac{5}{2}}\|_{L^2} \le  C \|(\nabla T_{\varepsilon}^m)^{\frac{5}{2}}\|_{L^2} + C \|T_\varepsilon^m\|_{L^5}^{\frac{5}{2}},\]
we have 
\begin{align}\label{eq:l15}
	\int_0^t &\|T_\varepsilon^m\|_{L^{15}(\mathbb{T}^3)}^5 d\tau \nonumber\\
	=& \int_0^t \|(T_\varepsilon^m)^{\frac{5}{2}}\|_{L^6(\mathbb{T}^3)}^2 d\tau \nonumber\\
	\le& C \int_0^t  \|\nabla (T_{\varepsilon}^m)^{\frac{5}{2}}\|_{L^2}^2 + C\int_0^t \|T_\varepsilon^m\|_{L^5}^2d\tau \nonumber\\
	\le& C \|\nabla (T_\varepsilon^m)^{\frac{5}{2}}\|_{L^2([0,\infty);L^2(\mathbb{T}^3))}d\tau + C t \|T_\varepsilon^m\|_{L^\infty([0,\infty);L^5(\mathbb{T}^3))}^2,
\end{align}
which is bounded. Therefore, 
\begin{align}
	&\{(T_\varepsilon^m)^{\frac{5}{2}}\}_{m>0}\text{ is uniformly bounded in }L^2_{\loc}([0,\infty);L^{6}(\mathbb{T}^3)),\label{eq:bdl52}\\
	&\{T_\varepsilon^m\}_{m>0} \text{ is uniformly bounded in }L^5_{\loc}([0,\infty);L^{15}(\mathbb{T}^3)).\label{eq:bdl15}
\end{align}

\textbf{Passing to the limit in the Galerkin system.}
Using \eqref{eq:unibd1}-\eqref{eq:unibd4} and \eqref{eq:bdl52}-\eqref{eq:bdl15}, 
there exists subsequences $\{T_\varepsilon^{m_k}\}_{k>0}$ and $\{\psi_\varepsilon^{m_k}\}_{k>0}$ such that
\begin{align}
	&T_\varepsilon^{m_k} \rightharpoonup T_\varepsilon, \text{ weakly in } L^2_{\loc}([0,\infty);L^5(\mathbb{T}^5))\cap L^{5}_{\loc}([0,\infty);L^{15}(\mathbb{T}^3)), \label{eq:wkconv1} \\
	&T_\varepsilon^{m_k} \rightharpoonup^* T_\varepsilon, \text{ weakly star in } L^\infty([0,\infty);L^5(\mathbb{T}^5)), \label{eq:wkstarconvt} \\
	&(T_\varepsilon^{m_k})^{\frac{5}{2}} \rightharpoonup \overline{(T_\varepsilon^{m_k})^{\frac{5}{2}}}, \text{ weakly in }L^2_{\loc}([0,\infty);H^1(\mathbb{T}^3)),\label{eq:asm2}\\
	&\psi_\varepsilon^{m_k} \rightharpoonup \psi_\varepsilon, \text{ weakly in } L^2_{\loc}([0,\infty);L^2(\mathbb{T}^3\times\mathbb{S}^2)), \label{eq:wkconv2} \\
	&\psi_\varepsilon^{m_k} \rightharpoonup^* \psi_\varepsilon, \text{ weakly star in } L^\infty([0,\infty);L^2(\mathbb{T}^3\times\mathbb{S}^2)), \label{eq:wkstarconvpsi} \\
	&\frac{1}{\varepsilon}(\psi_\varepsilon^{m_k} - \mathbb{P}_m((T_\varepsilon^{m_k})^4)) \rightharpoonup A, \text{ weakly in } L^2_{\loc}([0,\infty);L^2(\mathbb{T}^3\times\mathbb{S}^2)), \label{eq:weakconv3b}
\end{align}
as $k\to \infty$.
Notice that here $A \in L^2_{\loc}([0,\infty);L^2(\mathbb{T}^3\times\mathbb{S}^2))$ is a bounded function. Due to the property of the operator $\mathbb{P}_m$,
\begin{align*}
	(T_\varepsilon^m)^4 - \mathbb{P}_m((T_\varepsilon^m)^4) \to 0,
\end{align*}
as $m\to \infty$, so with \eqref{eq:weakconv3b}, we can conclude that 
\begin{align}
	&\frac{1}{\varepsilon}(\psi_\varepsilon^{m_k} - (T_\varepsilon^{m_k})^4) \rightharpoonup A, \text{ weakly in } L^2_{\loc}([0,\infty);L^2(\mathbb{T}^3\times\mathbb{S}^2)). \label{eq:weakconv3}
\end{align}
From the energy estiamte \eqref{eq:enegl}, we have
\begin{align*}
	&\partial_t T_\varepsilon^m \text{ is uniformly bound in } L^2_{\loc}([0,t];H^{-2}(\mathbb{T}^3)),\\
	&\partial_t \psi_\varepsilon^m \text{ is uniformly bound in } L^2_{\loc}([0,t];H^{-1}(\mathbb{T}^3)),
\end{align*}
these, together with \eqref{eq:unibd1} and \eqref{eq:unibd2}, by the Aubin-Lions Lemma, imply that
\begin{align}
	&T_\varepsilon^m \to T_\varepsilon, \text{ strongly in }L_{\loc}^2([0,t];L^2(\mathbb{T}^3)), \label{eq:tconv1}\\
	&\psi_\varepsilon^m \to \psi_\varepsilon, \text{ strongly in }L_{\loc}^2([0,t];L^2(\mathbb{T}^3\times\mathbb{S}^2)),\label{eq:psiconv1}\\
	&\partial_t T_\varepsilon^m \rightharpoonup \partial_t T_\varepsilon \text{ weakly in }L^2_{\loc}([0,\infty);H^{-2}(\mathbb{T}^3)), \\
	&\partial_t \psi_\varepsilon^m \rightharpoonup \partial_t \psi_\varepsilon \text{ weakly in }L^2_{\loc}([0,\infty);H^{-1}(\mathbb{T}^3\times\mathbb{S}^2)).
\end{align}
Therefore, the Galerkin approximate system converges. 
It also follows from the above fact that
\begin{align}\label{eq:wkcontinuity}
	T_\varepsilon \in C_{w}([0,\infty);L^2(\mathbb{T}^3)), \quad \psi_\varepsilon \in C_{w}([0,\infty);L^2(\mathbb{T}^3\times\mathbb{S}^2)).
\end{align}
This means that $T_\varepsilon$ and $\psi_\varepsilon$ are weakly continuous with values in $L^5(\mathbb{T}^3)$ and $L^2(\mathbb{T}^3\times\mathbb{S}^2)$, respectively.

\textbf{The limit satisfies the system \eqref{eq:Teps}-\eqref{eq:psieps}.}
To show the limit satisfies the system \eqref{eq:Teps}-\eqref{eq:psieps} in the sense of distributions, we apply test functions on \eqref{eq:gl1}-\eqref{eq:gl2}. We take the convergence subsequence obtained in the previous step. Here we will drop the subscript $k$ for simplicity. Fix $t>0$ and apply smooth function $\varphi \in C^\infty([0,t]\times \mathbb{T}^3)$ and $\rho \in C^\infty([0,t]\times \mathbb{T}^3\times \mathbb{S}^2)$ to the equations \eqref{eq:gl1} and \eqref{eq:gl2}, respectively, we arrive at
	\begin{align}
		&\int_{\mathbb{T}^3} T_\varepsilon^m(t) \cdot \varphi(t) dx - \int_0^t \int_{\mathbb{T}^3} T_\varepsilon^m \partial_t \varphi dxd\tau - \int_0^t\int_{\mathbb{T}^3} T_\varepsilon^m \Delta \varphi dxd\tau \nonumber\\
		&\quad- \int_0^t\iint_{\mathbb{T}^3\times \mathbb{S}^2}  \frac{1}{\varepsilon^2} \varphi (\psi_\varepsilon^m-\mathbb{P}_m((T_\varepsilon^m)^4))d\beta dxd\tau = \int_{\mathbb{T}^3} \mathbb{P}_mT_{\varepsilon0} \cdot \varphi(0) dx, \label{eq:glw1}\\
		&\iint_{\mathbb{T}^3\times\mathbb{S}^2} \psi_\varepsilon^m (t) \cdot \rho(t) d\beta dx - \int_0^t \iint_{\mathbb{T}^3\times\mathbb{S}^2} \psi_\varepsilon^m \partial_t \rho d\beta dxd\tau \nonumber \\
		&\qquad- \int_0^t \iint_{\mathbb{T}^3\times\mathbb{S}^2} \frac{1}{\varepsilon} \psi_\varepsilon^m \beta \cdot \nabla \rho d\beta dxd\tau  - \int_0^t\iint_{\mathbb{T}^3\times \mathbb{S}^2}  \frac{1}{\varepsilon^2}\rho (\psi_\varepsilon^m-\mathbb{P}_m((T_\varepsilon^m)^4))d\beta dxd\tau \nonumber\\
		&\quad= \iint_{\mathbb{T}^3\times\mathbb{S}^2} \mathbb{P}_m \psi_{\varepsilon0} \cdot \rho(0) d\beta dx. \label{eq:glw2}
	\end{align}
From the property of the operator $\mathbb{P}_m$, $\|f-\mathbb{P}_mf\|_{L^2} \to 0$ as $m\to\infty$, we get
\begin{align*}
	 &\int_{\mathbb{T}^3} \mathbb{P}_mT_{\varepsilon0} \cdot \varphi(0) dx \to  \int_{\mathbb{T}^3} T_{\varepsilon0} \cdot \varphi(0) dx, \\
	 & \iint_{\mathbb{T}^3\times\mathbb{S}^2} \mathbb{P}_m \psi_{\varepsilon0} \cdot \rho d\beta dx \to  \iint_{\mathbb{T}^3\times\mathbb{S}^2}  \psi_{\varepsilon0} \cdot \rho(0) d\beta dx.
\end{align*}
For the terms involving $(T_\varepsilon^m)^4$, we can use the strong convergence of $T_\varepsilon^m$ in \eqref{eq:tconv1} to get
\begin{align*}
	&\int_0^t\iint_{\mathbb{T}^3\times \mathbb{S}^2}\varphi (\mathbb{P}_m((T_\varepsilon^m)^4)-T_\varepsilon^4)d\beta dxd\tau \\
	&\quad = \int_0^t\iint_{\mathbb{T}^3\times \mathbb{S}^2}\varphi (\mathbb{P}_m((T_\varepsilon^m)^4)- \mathbb{P}_m T_\varepsilon^4 + \mathbb{P}_m T_\varepsilon^4- T_\varepsilon^4)d\beta dxd\tau \\
	&\quad= \int_0^t\iint_{\mathbb{T}^3\times \mathbb{S}^2}\mathbb{P}_m\varphi \cdot ((T_\varepsilon^m)^4 - T_\varepsilon^4) d\beta dxd\tau +  \int_0^t\iint_{\mathbb{T}^3\times \mathbb{S}^2} T_\varepsilon^4(\mathbb{P}_m \varphi - \varphi) d\beta dxd\tau\\
	&\le C(\|T_\varepsilon^m\|^{3}_{L^\infty([0,t];L^3(\mathbb{T}^3))} + \|T_\varepsilon\|^{3}_{L^\infty([0,t];L^3(\mathbb{T}^3))}) \|\mathbb{P}_m\varphi\|_{L^2(\mathbb{T}^3\times \mathbb{S}^2)}\|T_\varepsilon^m - T_\varepsilon\|_{L^2([0,t];L^2(\mathbb{T}^3))} \\
	&\quad + C\|T_\varepsilon\|^{4}_{L^8([0,t];L^8(\mathbb{T}^3))} \|\mathbb{P}_m\varphi - \varphi\|_{L^2([0,t];L^2(\mathbb{T}^3))}.
\end{align*}
From the strong convergence \eqref{eq:tconv1}, the first term on the right hand side of the above inequality goes to zero as $m \to \infty$. From the  property of $\mathbb{P}_m$, the second also goes to zero. Therefore, we conclude that 
\begin{align*}
	\int_0^t\iint_{\mathbb{T}^3\times \mathbb{S}^2}\varphi \mathbb{P}_m((T_\varepsilon^m)^4)d\beta dxd\tau \to \int_0^t\iint_{\mathbb{T}^3\times \mathbb{S}^2}\varphi T_\varepsilon^4 d\beta dxd\tau.
\end{align*}
The following convergence result can obtained in a similar way:
\begin{align*}
	\int_0^t\iint_{\mathbb{T}^3\times \mathbb{S}^2}\rho \mathbb{P}_m((T_\varepsilon^m)^4)d\beta dxd\tau \to \int_0^t\iint_{\mathbb{T}^3\times \mathbb{S}^2}\rho T_\varepsilon^4 d\beta dxd\tau.
\end{align*}
Lastly, from \eqref{eq:tconv1} and \eqref{eq:psiconv1}, we have
\begin{align*}
	&\int_{\mathbb{T}^d} T_\varepsilon^m(t) \varphi(t) dx \to \int_{\mathbb{T}^d}  T_\varepsilon (t) \varphi(t) dx, \\
	&\int_{\mathbb{T}^d} \psi_\varepsilon^m(t) \rho(t) dx \to \int_{\mathbb{T}^d}  \psi_\varepsilon (t) \rho(t) dx.
\end{align*}
Notice that the weakly continuity \eqref{eq:wkcontinuity} get rid of the possible bad zero measure set in time.

Using \eqref{eq:tconv1} and \eqref{eq:psiconv1}, we can pass to the limit in the other terms in equations \eqref{eq:glw1} and \eqref{eq:glw2}. Finally we arrive at \eqref{eq:weakt1} and \eqref{eq:weakt2}. Thus, for any $t>0$, ($T_\varepsilon$, $\psi_\varepsilon$) solves the system \eqref{eq:Teps}-\eqref{eq:psieps} in the sense of distributions and it satisfies \eqref{eq:funsp}.

\textbf{The energy inequality.}
To show  the energy inequality, we consider the inequality \eqref{eq:enegl}. Notice that since we have the strong convergence of $T_{\varepsilon}^m,\psi_\varepsilon^m$ according to \eqref{eq:tconv1} and \eqref{eq:psiconv1}, we can take a subsequence that converges everywhere. Let's take this subsequence to recover the energy estimate. The weak star convergences in \eqref{eq:wkstarconvt} and \eqref{eq:wkstarconvpsi} imply that
\begin{align}\label{eq:lsuptpsi}
	\|T_\varepsilon(t)\|_{L^5(\mathbb{T}^3)}^5 \le \limsup_{m\to\infty}\|T_\varepsilon^m\|_{L^5(\mathbb{T}^3)}^5, ~~ \|\psi_\varepsilon(t)\|_{L^2(\mathbb{T}^3\times\mathbb{S}^2)}^2 \le \limsup_{m\to\infty}\|\psi_\varepsilon^m\|_{L^2(\mathbb{T}^3\times\mathbb{S}^2)}^2.
\end{align}
The weak convergence in \eqref{eq:weakconv3} implies that 
\[\psi_\varepsilon^m - (T_\varepsilon^m)^4 \rightharpoonup \psi_\varepsilon -T_\varepsilon^4 \quad \text{in} \quad L^2([0,t];L^2(\mathbb{T}^3\times \mathbb{S}^2)). \]
Therefore,
\begin{align}\label{eq:liminfpsi-t4}
	\int_0^t \|\psi_\varepsilon - T_\varepsilon^4\|_{L^2(\mathbb{T}^3\times\mathbb{S}^2)}^2 d\tau \le \liminf_{m\to\infty} \int_0^t \|\psi_\varepsilon^m -\mathbb{P}_m((T_\varepsilon^m))^4\|_{L^2(\mathbb{T}^3\times \mathbb{S}^2)}^2 d\tau.
\end{align}
From \eqref{eq:unibd2} and the strong convergence \eqref{eq:tconv1}, we have
\[(T_\varepsilon^m)^{\frac{5}{2}} \rightharpoonup (T_\varepsilon)^{\frac{5}{2}}\quad \text{in} \quad L^2([0,\infty);{H}^1(\mathbb{T}^3)), \]
and thus
\begin{align}\label{eq:liminfnabla}
	\int_0^t \|(\nabla T_\varepsilon)^{\frac{5}{2}}\|_{L^2(\mathbb{T}^3)}^2 d\tau \le \liminf_{m\to \infty} \int_0^t \|(\nabla T_\varepsilon^m)^{\frac{5}{2}}\|_{L^2(\mathbb{T}^3)}^2 d\tau.
\end{align}
Taking the $\limsup_{m\to\infty}$ in the energy inequality \eqref{eq:enegl} and using the above estimates, we arrive at \eqref{eq:energythm1} and finish the proof.
\end{proof}

\subsection{Case of nonhomogeneous Dirichlet boundary condition}
We now consider the case of nonhomogeneous Dirichlet boundary condition \eqref{b3} under the assumption of well-prepared initial and boundary conditions. For simplicity, here we assume the boundary data $T_b$ and $\psi_b$ are time independent such that $T_b=T_b(x)$ and $\psi_b = \psi_b(x)$. The case of time dependent boundary can be treated similarily. Before we give the definition of weak solutions, we introduce the trace operators that extends the functions in Sobolev spaces to the boundary. 

We take $\gamma^1:H^1(\Omega) \to L^2(\partial\Omega)$ to be the trace operator. We learn from the trace theorem (see \cite{evans1997partial}) that if $\Omega$ is bounded and $\partial\Omega \in C^1$, then there exists a trace operator $\gamma^1$ such that 
\begin{align*}
	\gamma^1 f = f|_{\partial\Omega}, \text{ if } f \in H^1(\Omega),
\end{align*}
and 
\begin{align*}
	\|\gamma^1 f\|_{L^2(\partial\Omega)} \le C\|f\|_{H^{1}(\Omega)}.
\end{align*}
Here the constant $C$ only depends on $\Omega$. For the weak formulation of the equation \eqref{eq:Teps} we can apply the trace operator on $T_\varepsilon$ to get
\begin{align*}
	\gamma^1 T_\varepsilon = T_b.
\end{align*}

To consider the boundary condition \eqref{bpsi}, we define the trace operator following  \cite[Appendix (B.1)]{saint2009hydrodynamic} as
\begin{align}\label{eq:gamma2def}
	\gamma^2: \psi \in W^2(\mathbb{R}_+\times\Omega\times\mathbb{S}^2) \mapsto \psi|_{\partial \Omega} \in L^2(\mathbb{R}_+ \times \partial \Omega \times \mathbb{S}^2, |n\cdot \beta| d\beta d\sigma_x dt).
\end{align}
Here $W^2$ is the space
\begin{align}\label{eq:w2def}
	W^2(\mathbb{R}\times\Omega\times\mathbb{S}^2) := \{\psi \in L^2(\mathbb{R}_+\times\Omega\times\mathbb{S}^2): (\varepsilon \partial_t + \beta \cdot \nabla)\psi \in L^2(\mathbb{R}_+\times\Omega\times\mathbb{S}^2)\}.
\end{align}

The following lemma was proved in \cite[Proposition B.1]{saint2009hydrodynamic}.
\begin{lemma}\label{lm:trace}
	The trace operator defined in \eqref{eq:gamma2def} is continuous.
\end{lemma}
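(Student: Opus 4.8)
The plan is to establish continuity of the trace operator $\gamma^2$ defined in \eqref{eq:gamma2def} by following the classical Green's identity argument adapted to the transport operator $\varepsilon\partial_t + \beta\cdot\nabla$, essentially mirroring the proof of Proposition B.1 in \cite{saint2009hydrodynamic}. First I would observe that for smooth functions $\psi \in C^\infty(\overline{\mathbb{R}_+\times\Omega\times\mathbb{S}^2})$, an integration by parts in $t$ and $x$ (the latter using the divergence theorem on $\Omega$, with $\beta$ treated as a fixed parameter) gives
\begin{align*}
	\iiint_{\mathbb{R}_+\times\Omega\times\mathbb{S}^2} \bigl((\varepsilon\partial_t + \beta\cdot\nabla)\psi\bigr)\, \psi \, \chi \, d\beta\, dx\, dt = \frac{1}{2}\iint_{\partial\Omega\times\mathbb{S}^2} (n\cdot\beta)\, |\psi|^2 \, \chi\big|_{\partial\Omega} \, d\beta\, d\sigma_x\Big|_{t} + (\text{lower order}),
\end{align*}
where $\chi$ is a suitable smooth cutoff. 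The key point is that this identity controls $\int (n\cdot\beta)|\psi|^2$ on $\partial\Omega\times\mathbb{S}^2$, and hence $\int |n\cdot\beta|\, |\gamma^2\psi|^2$, by $\|\psi\|_{L^2}\|(\varepsilon\partial_t+\beta\cdot\nabla)\psi\|_{L^2}$ plus boundary-in-time contributions, i.e. precisely by $\|\psi\|_{W^2}^2$.

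Next I would handle the fact that $\psi$ is only in $W^2$ and not smooth: I would introduce a density argument, approximating $\psi \in W^2$ by smooth functions $\psi_k$ such that $\psi_k \to \psi$ in $L^2$ and $(\varepsilon\partial_t + \beta\cdot\nabla)\psi_k \to (\varepsilon\partial_t + \beta\cdot\nabla)\psi$ in $L^2$. Standard mollification in $(t,x)$ (with a tangential shift near $\partial\Omega$ to stay inside the domain, or an extension-then-mollify argument) produces such approximations; then the Green's identity applied to $\psi_k - \psi_\ell$ shows $\{\gamma^2\psi_k\}$ is Cauchy in $L^2(\mathbb{R}_+\times\partial\Omega\times\mathbb{S}^2, |n\cdot\beta|\, d\beta\, d\sigma_x\, dt)$, so $\gamma^2$ extends by continuity and the bound $\|\gamma^2\psi\|_{L^2(|n\cdot\beta|)} \le C\|\psi\|_{W^2}$ passes to the limit. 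One has to be slightly careful that the pointwise-in-time boundary terms make sense; this is handled by noting that $W^2$ functions have a representative in $C([0,\infty); L^2_{\loc})$ after integrating the transport equation along characteristics, or by working with the time-integrated version of the identity against test functions in $t$ and only claiming the trace in the space-time $L^2$ sense as stated in \eqref{eq:gamma2def}.

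I expect the main obstacle to be the degeneracy of the weight $|n\cdot\beta|$ on the grazing set $\{n\cdot\beta = 0\}$: the transport operator gives no control there, and one must be content with a trace belonging to the weighted space $L^2(|n\cdot\beta|\, d\beta\, d\sigma_x\, dt)$ rather than an unweighted one. This is exactly why the codomain in \eqref{eq:gamma2def} carries that weight, and the Green's identity naturally produces the weighted norm, so the difficulty is really just in setting up the density/mollification argument cleanly near the boundary and near the grazing directions rather than in any new estimate. Since the statement asserts only continuity and is quoted verbatim from \cite[Proposition B.1]{saint2009hydrodynamic}, the cleanest route is to reduce to that reference: verify that the operator $\varepsilon\partial_t + \beta\cdot\nabla$ on $\mathbb{R}_+\times\Omega\times\mathbb{S}^2$ satisfies the hypotheses of the cited proposition (with the extra time variable absorbed into an enlarged spatial variable, or with $\varepsilon$ fixed playing no essential role), and invoke it. I would therefore present the short self-contained Green's-identity computation as the core, flag the grazing-set weight as the reason for the weighted codomain, and cite \cite{saint2009hydrodynamic} for the density technicalities.
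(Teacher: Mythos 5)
Your overall route (a Green's-formula/multiplier identity for $\varepsilon\partial_t+\beta\cdot\nabla$, plus a density argument, following \cite[Proposition B.1]{saint2009hydrodynamic}) is the same family of argument as the paper's, but your sketch has a genuine gap at the decisive step. With a multiplier $\chi=\chi(t,x)$ (a ``suitable smooth cutoff'' independent of $\beta$), the boundary term produced by the identity is the \emph{signed} quantity $\int (n\cdot\beta)\,|\psi|^2\,\chi\,d\beta\,d\sigma_x\,dt$, and your claim that this ``hence'' controls $\int |n\cdot\beta|\,|\gamma^2\psi|^2$ is a non sequitur: the contributions from $\Sigma_+$ and $\Sigma_-$ enter with opposite signs, so the identity only bounds their difference (outgoing minus incoming) by the interior norms, not each half, and a fortiori not the weighted trace norm. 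The missing idea — and the whole content of the paper's one-line proof — is to take a multiplier that depends on $\beta$ and whose boundary restriction has the sign of $n\cdot\beta$: the paper chooses $\rho(x,\beta)=n\cdot\beta/|n\cdot\beta|$, so that $\rho\,(n\cdot\beta)=|n\cdot\beta|$ and the boundary term in \eqref{eq:psibcal} becomes exactly $\|\gamma^2\psi\|^2_{L^2(|n\cdot\beta|\,d\beta\,d\sigma_x\,dt)}$, while the interior terms are bounded by $\|\psi\|_{L^2}\,\|(\varepsilon\partial_t+\beta\cdot\nabla)\psi\|_{L^2}$ and $\|\nabla_x\rho\|_{L^\infty}\|\psi\|_{L^2}^2$. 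Without such a sign-adapted multiplier your estimate simply does not close; the degeneracy at the grazing set, which you flag as the main obstacle, is not the issue (the weight $|n\cdot\beta|$ in the codomain already accounts for it).

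Your fallback of reducing the statement to the cited proposition is legitimate as a citation, but as a proof it begs the question, since the point of the lemma in the paper is precisely the short self-contained multiplier computation. The density/mollification discussion is fine in spirit (and the paper does not even belabor it), but it cannot repair the estimate above: whatever smooth approximants you use, applying the identity to $\psi_k-\psi_\ell$ with a $\beta$-independent cutoff still only controls a signed boundary integral, so the Cauchy property in $L^2(|n\cdot\beta|\,d\beta\,d\sigma_x\,dt)$ does not follow. Insert the choice $\rho(x,\beta)=n\cdot\beta/|n\cdot\beta|$ (or any bounded extension whose trace equals $\sgn(n\cdot\beta)$) and your argument becomes the paper's.
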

\begin{proof}
	For any bounded function $\rho \in C^1(\bar{\Omega}\times \mathbb{S}^2)$, we use Green's formula to get
	\begin{align}
	 	2\iiint& \rho(x,\beta) \psi (\varepsilon\partial_t + \beta \cdot \nabla_x)\psi  d\beta dxdt + \iiint (\beta \cdot \nabla_x) \rho(x,\beta) \psi^2 d\beta dxdt \nonumber\\=& \iiint \rho(x,\beta) \psi^2(t,x,\beta) (n\cdot \beta) d\beta d\sigma_xdt. \label{eq:psibcal}
	 \end{align} 
	 Choose $\rho(x,v)=n\cdot \beta/|n\cdot \beta|$ and we get 
	 \[\|\psi|_{\partial\Omega}\|_{L^2(\mathbb{R}_+\times\Omega\times\mathbb{S}^2,|n\cdot \beta|d\beta d\sigma_xdt)} \le C(\|\psi\|_{L^2(\mathbb{R}_+\times\Omega\times\mathbb{S}^2)} + \|(\varepsilon\partial_t + \beta \cdot \nabla_x )\psi)\|_{L^2(\mathbb{R}_+\times\Omega\times\mathbb{S}^2)}).\]
\end{proof}

\begin{definition}\label{def3}
	Assume $\Omega\in C^1$. \tcr{ Let $0 \le T_{\varepsilon0} \in L^5(\Omega)$ and $0 \le\psi_{\varepsilon0} \in L^2(\Omega \times \mathbb{S}^2)$. Let $0 \le T_b \in L_{\loc}^5([0,\infty);L^5(\partial \Omega))$ and $0 \le \psi_b \in L_{\loc}^2([0,\infty); L^2(\Sigma_-;|n\cdot\beta| d\beta d\sigma_x))$.} We say that $(T_\varepsilon,\psi_\varepsilon)$ is a weak solution of the system \eqref{eq:Teps}-\eqref{eq:psieps} with initial conditions \eqref{eq:ic1}-\eqref{eq:ic2} and boundary conditions \eqref{bpsi}, \eqref{b3} if
		\begin{align*}
		&T_\varepsilon \in L_{\loc}^\infty (0,\infty;L^5(\Omega)),
		\quad T_\varepsilon^{\frac{5}{2}} \in L^2_{\loc}(0,\infty;{H}^1(\Omega)),\\
		&\psi_\varepsilon \in L_{\loc}^\infty(0,\infty; L^2(\Omega \times \mathbb{S}^2))\cap W^2_{\loc}([0,\infty)\times\Omega\times\mathbb{S}^2),
	\end{align*}
	and it solves \eqref{eq:Teps}-\eqref{eq:psieps} in the sense of distributions, i.e. for any test functions $\varphi  \in C_0^\infty([0,\infty)\times\Omega)$ and $\rho\in C_0^\infty([0,\infty)\times\Omega\times \mathbb{S}^2)$, the following equations hold:
	\begin{align}
		&-\iint_{[0,\infty)\times\Omega} \left(T_\varepsilon\partial_t \varphi + T_\varepsilon \Delta \varphi + \frac{1}{\varepsilon^2}\int_{\mathbb{S}^2} \varphi(\psi_\varepsilon - T_\varepsilon^4) d\beta \right) dxdt 
	 \nonumber\\
		&\quad 
		 + \iint_{[0,\infty)\times \partial \Omega} (\gamma^1 T_\varepsilon) n\cdot \nabla \varphi|_{\partial\Omega} d\sigma_x dt 
		= \int_{\Omega} T_{\varepsilon0} \varphi(0,\cdot)dx, \label{eq:weakd1}\\
		&-\iiint_{[0,\infty)\times\Omega \times \mathbb{S}^2} \left(\psi_\varepsilon \partial_t \rho + \frac{1}{\varepsilon} \psi_\varepsilon \beta \cdot \nabla \rho - \frac{1}{\varepsilon^2} \rho(\psi_\varepsilon-T_\varepsilon^4)\right) d\beta dxdt 
		\nonumber \\
		&\quad= \iint_{\Omega \times \mathbb{S}^2} \psi_{\varepsilon0}\rho(0,\cdot) dx, \label{eq:weakd2}
	\end{align}
	where
	\begin{align*}
		&\gamma^1 T_\varepsilon \big|_{\partial\Omega} = T_b,\\
		&\gamma^2 \psi_\varepsilon \big|_{\Sigma_{-}} = \alpha \psi_b + (1-\alpha)  L\psi_\varepsilon\big|_{\Sigma_{+}}.
	\end{align*}
\end{definition}

Next we prove the following existence theorem.
\begin{theorem}\label{thmexistd}
	 Assume $\partial\Omega\in C^1$. \tcr{ Let $0 \le T_{\varepsilon0} \in L^5(\Omega)$ and $0 \le\psi_{\varepsilon0} \in L^2(\Omega \times \mathbb{S}^2)$. Let $0 \le T_b \in L_{\loc}^5([0,\infty);L^5(\partial \Omega))$ and $0 \le \psi_b \in L_{\loc}^2([0,\infty); L^2(\Sigma_-;|n\cdot\beta| d\beta d\sigma_x))$.} Let the boundary condition to be well prepared such that $\psi_b = T_b^4$. Then there exists a global nonnegative weak solution $(T_\varepsilon,\psi_\varepsilon)$ of the system \eqref{eq:Teps}-\eqref{eq:psieps} with initial conditions \eqref{eq:ic1}-\eqref{eq:ic2} and boundary conditions \eqref{bpsi}, \eqref{b3}. Furthermore, the following inequality holds
	 	\begin{align}\label{eq:estdirichlet}
		&\|T_{\varepsilon}(t)\|_{L^5(\Omega))}^5 + \|\psi_\varepsilon(t)\|_{L^2(\Omega\times\mathbb{S}^2)}^2 + \int_0^t \|\nabla (T_\varepsilon)^{\frac{5}{2}}\|_{L^2(\Omega)}^2 d\tau \nonumber\\
		&\qquad+ \frac{1}{\varepsilon^2} \int_0^t \|\psi_\varepsilon -T_\varepsilon^4)\|_{L^2(\Omega\times\mathbb{S}^2)}^2 d\tau \nonumber \\
		&\qquad+\frac{2\alpha - \alpha^2}{2\varepsilon} \int_0^t \|\psi_\varepsilon - \psi_b\|_{L^2(\Sigma_+;|n\cdot\beta| d\beta d\sigma_x)}^2 d\tau \nonumber\\
		&\quad \le C(\|T_{\varepsilon0}\|_{L^5(\Omega)}^5 + \|\psi_{\varepsilon0}\|_{L^2(\Omega\times\mathbb{S}^2)}^2),
	\end{align}
	for any $t>0$. \tcb{Here $C$ depends on $t$ and $\Omega$ but is independent of $\varepsilon$. The norm $\|\cdot\|_{L^2(\Sigma_+;|n\cdot\beta| d\beta d\sigma_x)}$ in the above inequality is defined by 
	$$\|f\|_{L^2(\Sigma_+;|n\cdot\beta| d\beta d\sigma_x)}:=\int_{\partial\Omega} \int_{\mathbb{S}^2\cap \beta\cdot n>0} f(x,\beta) d\sigma_x d\beta,$$
	where $\sigma_x$ is the surface integral element.}
\end{theorem}

\begin{proof}
Since the boundary conditions \eqref{b3} and \eqref{bpsi} for $T_\varepsilon$ and $\psi_\varepsilon$ are not homogeneous, we need to lift up the boundary data and make the Galerkin approximations after subtracting the lifted data. For the boundary condition \eqref{eq:Teps}, we introduce $\widetilde{T}=\widetilde{T}(x)$ as the solution to the problem
\begin{align}
	& \Delta \widetilde{T} = 0,\quad  x\in \Omega, \label{eq:Gequation}\\
	&\widetilde{T}(x) = T_b(x), \quad x\in\partial \Omega. \nonumber 
\end{align}
Owing to the well prepared boundary assumption \eqref{eq:wellbc}, we take $\widetilde{\psi}=\widetilde{T}^4$. After we introduce these variables, we can see that on the boundary,
\begin{align*}
	&T_\varepsilon - \widetilde{T} =0, \text{ on } \partial\Omega,\\
	&\psi_\varepsilon - \widetilde{\psi}= (1-\alpha) (L(\psi_\varepsilon - \widetilde{\psi}))(t,x,\beta), \text{ on } \in \Sigma_-. 
\end{align*}

In order to find the Galerkin approximations of \eqref{eq:Teps}-\eqref{eq:psieps}, we also need to define some truncation operators. We can take the complete set of the eigenvectors $\{w_k=w_k(x)\}_{k=1}^\infty$ of $H_0^1(\Omega)$ which is also an orthonormal basis in $L^2(\Omega)$. We take the operator $\mathbb{P}_m:L^2(\Omega)\to L^2(\Omega)$ as 
\begin{align*}
	\mathbb{P}_m f = \sum_{k\le m} (f,w_k) w_k(x),  
\end{align*}
where $(\cdot,\cdot)$ is the inner product in $L^2(\Omega)$. Note that $\mathbb{P}_m f =0$ on the boundary $\partial\Omega$.

By Lemma \ref{lem:basis} in Appendix \ref{appendixb}, we can also find an orthonormal basis $\{\varphi_k\}_{k=1}^\infty$ in $L^2(\Omega\times\mathbb{S}^2)$.
We define the operator $\mathbb{Q}_m$ as
 \begin{align}\label{eq:operatorQ}
 	\mathbb{Q}_m\psi(x,\beta)=\sum_{k=1}^m ((\psi,\varphi_k))\varphi_k(x,\beta).
 \end{align}
 Here $((\cdot,\cdot))$ denotes the inner product in the space $L^2(\Omega\times\mathbb{S}^2)$.
After these preparations, now we proceed to prove Theorem \ref{thmexistd}.

As in the proof of Theorem \ref{thm:existencet},
	we take the Galerkin approximations to be 
	\begin{align}
		\partial_t T_\varepsilon^m =~& \Delta T_\varepsilon^m + \frac{1}{\varepsilon^2}\left(\int_{\mathbb{S}^2} \mathbb{P}_m \psi_\varepsilon^m - \mathbb{P}_m ((T_\varepsilon^m)^4)d\beta\right), \label{eq:gl21}\\
		\partial_t \psi_\varepsilon^m + \frac{1}{\varepsilon}\beta \cdot \nabla \psi_\varepsilon^m =~& -\frac{1}{\varepsilon^2}\left(\psi_\varepsilon^m - \mathbb{Q}_m\mathbb{P}_m((T_\varepsilon^m)^4)\right), \label{eq:gl22}
	\end{align} 
	with initial conditions 
	\begin{align*}
		T_{\varepsilon}^m(t=0,x) =~& \mathbb{P}_mT_{\varepsilon0}(x), \\
		\psi_{\varepsilon}^m(t=0,x,\beta) =~&\mathbb{Q}_m \psi_{\varepsilon0}(\beta,x),
	\end{align*}	
	and boundary conditions
	\begin{align}
		T_{\varepsilon}^m(t,x) =~& T_b, \text{ for } x\in \partial\Omega, \label{eq:btgl}\\
		\psi_\varepsilon^m |_{\Sigma_-}=~&\alpha \psi_b + L\psi_\varepsilon^m|_{\Sigma_+}.\label{eq:bpsigl} 
	\end{align}

	We can take 
	\begin{align*}
		&T_\varepsilon^m - \widetilde{T} = \sum_{k=1}^m d_k(t)\langle \varphi_k \rangle(x) , \\
		&\psi_\varepsilon^m - \widetilde{\psi}= \sum_{k=1}^m \phi_k(t)\varphi_k(\beta,x),
	\end{align*} 
	into \eqref{eq:gl21} and \eqref{eq:gl22}, and get a system of ordinary differential equations for $d_k$ and $\phi_k$. 	From the Cauchy-Lipschitz theorem, the ODE system has a global unique solution if $T_\varepsilon^m$ and $\psi_\varepsilon^m$ is bounded uniformly in time, which will be shown below.
	It follows that the system \eqref{eq:gl21}-\eqref{eq:gl22} has a global unique solutions. \tcb{Note the solutions $T_\varepsilon^m,\psi_\varepsilon^m\ge 0$, which is shown in Appendix A.}

	Next we derive the energy estimate for the system \eqref{eq:gl21}-\eqref{eq:gl22}.
	We multiply equation \eqref{eq:gl21} by $(T_\varepsilon^m)^4 - \widetilde{T}^4$ and equation \eqref{eq:gl22} by $\psi_\varepsilon^m - \widetilde{\psi}$, integrate over $[0,t]\times\Omega$ and $[0,t]\times\Omega\times\mathbb{S}^2$ respectively, and add the results together. We obtain
	\begin{align}
		\int_{\Omega}&\left(\frac{(T_\varepsilon^m)^5}{5} - \widetilde{T}^4T_\varepsilon^m\right)(t) dx + \frac{1}{2} \iint_{\Omega \times \mathbb{S}^2}(\psi_\varepsilon^m - \widetilde{\psi})^2(t) d\beta dx \nonumber \\
		=&\int_{\Omega}\frac{(T_{\varepsilon0}^m)^5}{5} dx + \frac{1}{2} \iint_{\Omega \times \mathbb{S}^2}(\psi_{\varepsilon0}^m)^2 d\beta dx - \frac{1}{2\varepsilon} \int_0^t\iint_{\Omega\times\mathbb{S}^2} \beta \cdot \nabla (\psi_\varepsilon^m - \widetilde{\psi})^2 d\beta dx d\tau  \nonumber\\
		&+ \int_0^t \iint_{\Omega\times\mathbb{S}^2} \beta \cdot \nabla \widetilde{\psi} (\psi_\varepsilon^m - \widetilde{\psi}) d\beta dxd\tau+\int_0^t \int_{\Omega} ((T_\varepsilon^m)^4 - \widetilde{T}^4)\Delta T_\varepsilon^m dxd\tau  \nonumber\\
		&- \frac{1}{\varepsilon^2}\int_0^t\iint_{\Omega\times\mathbb{S}^2}\left(\psi_\varepsilon^m - \mathbb{P}_m((T_\varepsilon^m)^4)\right) (\psi_\varepsilon^m - \widetilde{\psi} - \mathbb{P}_m ((T_\varepsilon^m)^4) + \widetilde{T}^4)d\beta dxd\tau \nonumber\\
		=& I_1+I_2+I_3+I_4+I_5+I_6. \label{eq:i123456}
	\end{align}
	The first term on the left hand side of the above equation can be estimated by
	\begin{align}
		\int_{\Omega}&\left(\frac{(T_\varepsilon^m)^5}{5} - \widetilde{T}^4T_\varepsilon^m\right) dx \ge \int_{\Omega}\frac{(T_\varepsilon^m)^5}{5} dx - \int_{\Omega}\frac{1}{2}\frac{(T_\varepsilon^m)^5}{5} dx - 2\int_\Omega \frac{(\widetilde{T}^4)^{\frac{5}{4}}}{\frac{5}{4}} dx \nonumber \\
		\ge& \frac{1}{10} \|T_\varepsilon^m\|_{L^5(\Omega)}^5 - \frac{8}{5} \|\widetilde{T}\|_{L^5(\Omega)}^5. \label{eq:leftest}
	\end{align}
	The second term on the left can be estimated by 
	\begin{align}
		\frac{1}{2} \iint_{\Omega \times \mathbb{S}^2}(\psi_\varepsilon^m - \widetilde{\psi})^2d\beta dx \ge& \frac{1}{2} \iint_{\Omega \times \mathbb{S}^2}(\psi_\varepsilon^m)^2d\beta dx - \frac{1}{2} \iint_{\Omega \times \mathbb{S}^2}(\widetilde{\psi})^2d\beta dx \nonumber\\
		\ge&\frac{1}{2} \iint_{\Omega \times \mathbb{S}^2}(\psi_\varepsilon^m)^2d\beta dx - 2\pi \int_{\Omega} \widetilde{T}^8 dx . \label{eq:left2nd}
	\end{align}
	Next we estimate the right terms of \eqref{eq:i123456}. First, for $I_1$ and $I_2$, we can use the property of the operators $\mathbb{P}_m$ and $\mathbb{Q}_m$ to get
	\begin{align}
		I_1 + I_2 =& \int_{\Omega}\frac{(T_{\varepsilon0}^m)^5}{5} dx + \frac{1}{2} \iint_{\Omega \times \mathbb{S}^2}(\psi_{\varepsilon0}^m)^2 d\beta dx 
		\le \frac{1}{5}\|T_{\varepsilon0}\|_{L^5(\Omega)}^5 + \frac{1}{2}\|\psi_{\varepsilon0}\|_{L^2(\Omega\times\mathbb{S}^2)}^2.
	\end{align}
	Using the boundary condition \eqref{eq:bpsigl}, $I_3$ can be calculated as
	\begin{align}
		I_3 =& - \frac{1}{2\varepsilon} \int_0^t\iint_{\Omega\times\mathbb{S}^2} \beta \cdot \nabla (\psi_\varepsilon^m - \widetilde{\psi})^2 d\beta dx d\tau = -\frac{1}{2\varepsilon} \int_0^t \iint_{\Sigma} \beta \cdot n (\psi_\varepsilon^m - \widetilde{\psi})^2 d\beta d\sigma_xd\tau \nonumber \\
		=& -\frac{1}{2\varepsilon} \int_0^t \iint_{\Sigma_+} \beta \cdot n (\psi_\varepsilon^m - \widetilde{\psi})^2 d\beta d\sigma_x d\tau  -\frac{1}{2\varepsilon} \int_0^t \iint_{\Sigma_-} \beta \cdot n (\psi_\varepsilon^m - \widetilde{\psi})^2 d\beta d\sigma_x d\tau \nonumber\\
		=&-\frac{1}{2\varepsilon} \int_0^t \iint_{\Sigma_+} \beta \cdot n (\psi_\varepsilon^m - \psi_b)^2 d\beta d\sigma_x d\tau  \nonumber\\
		&-\frac{1}{2\varepsilon} \int_0^t \iint_{\Sigma_-} \beta \cdot n (1-\alpha)^2(\psi_\varepsilon^m(\beta') -\psi_b)^2 d\beta d\sigma_x d\tau \nonumber\\
		=&-\frac{2\alpha - \alpha^2}{2\varepsilon} \int_0^t \iint_{\Sigma_+} \beta \cdot n (\psi_\varepsilon^m - \psi_b)^2 d\beta d\sigma_x d\tau . \label{eq:i3est}
	\end{align}
	The term $I_4$ can be estimated by using the fact that $\tilde{\psi}$ is independent of $\beta$ as
	\begin{align}
		I_4 =& \int_0^t \iint_{\Omega\times\mathbb{S}^2} \beta \cdot \nabla \widetilde{\psi} (\psi_\varepsilon^m - \widetilde{\psi}) d\beta dxd\tau \nonumber\\
		=& \int_0^t \iint_{\Omega\times\mathbb{S}^2} \beta \cdot \nabla \widetilde{\psi} (\psi_\varepsilon^m - \mathbb{P}_m((T_\varepsilon^m)^4)) d\beta dxd\tau \nonumber \\
		\le& 2\varepsilon^2 \int_0^t \iint_{\Omega\times\mathbb{S}^2} |\beta \cdot \nabla \widetilde{\psi}|^2 d\beta dxd\tau + \frac{1}{2\varepsilon^2} \int_0^t \iint_{\Omega\times\mathbb{S}^2} (\psi_\varepsilon^m - \mathbb{P}_m((T_\varepsilon^m)^4))^2 d\beta dxd\tau \nonumber \\
		\le& 8\pi \varepsilon^2 \int_0^t \iint_{\Omega\times} | \nabla \widetilde{T}^4|^2  dxd\tau + \frac{1}{2\varepsilon^2} \int_0^t \iint_{\Omega\times\mathbb{S}^2} (\psi_\varepsilon^m - \mathbb{P}_m((T_\varepsilon^m)^4))^2 d\beta dxd\tau	.\label{eq:i4est}
	\end{align}
	We estimate the term $I_5$ by
	\begin{align*}
		I_5 =& \int_0^t \int_{\Omega}  ((T_\varepsilon^m)^4 - \widetilde{T}^4)\Delta T_\varepsilon^m dxd\tau\\
		=& -\int_0^t \int_{\Omega} \nabla ((T_\varepsilon^m)^4 - \widetilde{T}^4) \nabla T_\varepsilon dxd\tau \\
		=& -  \int_0^t\int_\Omega \frac{16}{25} |\nabla (T_\varepsilon^m)^{\frac{5}{2}}|^2dxd\tau  - \int_{0}^t \int_\Omega T_\varepsilon^m \Delta \widetilde{T}^4  dxd\tau + \int_0^t \int_{\partial\Omega} T_b n\cdot \nabla \widetilde{T}^4 d\sigma_xd\tau \\
		\le& - \int_0^t\int_\Omega \frac{16}{25} |\nabla (T_\varepsilon^m)^{\frac{5}{2}}|^2dxd\tau  + \int_{0}^t \int_\Omega \frac{(T_\varepsilon^m)^5}{5} dxd\tau + \int_0^t\int_{\Omega} \frac{(\Delta \widetilde{T}^4)^{\frac{5}{4}}}{\frac{5}{4}}  dxd\tau \\
		&+ \int_0^t \int_{\partial\Omega} 4T_b^4 n\cdot \nabla \widetilde{T} d\sigma_x d\tau.
	\end{align*}
	Multiplying equation \eqref{eq:Gequation} by $\widetilde{T}^4$ and integrating over $[0,t]\times \Omega$ leads to 
	\begin{align*}
		\int_{\Omega} \frac{\widetilde{T}^5}{5} (t)dx  + \frac{16}{25} \int_0^t\int_{\Omega} |\nabla \widetilde{T}^{\frac{5}{2}}|^2 dxd\tau  - \int_0^t\int_{\partial\Omega} T_b^4 n\cdot \nabla \widetilde{T} d\sigma_x d\tau=0,
	\end{align*}
	thus
	\begin{align}
		I_5 \le& -  \int_0^t\int_\Omega \frac{16}{25} |\nabla (T_\varepsilon^m)^{\frac{5}{2}}|^2dxd\tau  + \int_{0}^t \int_\Omega \frac{(T_\varepsilon^m)^5}{5} dxd\tau + \int_0^t\int_{\Omega} \frac{(\Delta \widetilde{T}^4)^{\frac{5}{4}}}{\frac{5}{4}}  dxd\tau \nonumber\\
		&+ \frac{4}{5} \int_{\Omega} {\widetilde{T}^5} (t)dx  + \frac{64}{25}\int_0^t \int_{\Omega} |\nabla \widetilde{T}^{\frac{5}{2}}|^2 dxd\tau. \label{eq:i5est}
	\end{align}
	The term $I_6$ can be treated by
	\begin{align}\label{eq:i6est}
		I_6=&- \frac{1}{\varepsilon^2}\int_0^t\iint_{\Omega\times\mathbb{S}^2}\left(\psi_\varepsilon^m - \mathbb{P}_m((T_\varepsilon^m)^4)\right) (\psi_\varepsilon^m - \widetilde{\psi} - \mathbb{P}_m ((T_\varepsilon^m)^4) + \widetilde{T}^4)d\beta dxd\tau \nonumber\\
		=& -\frac{1}{\varepsilon^2} \int_0^t \iint_{\Omega\times\mathbb{S}^2} (\psi_\varepsilon^m - \mathbb{P}_m((T_\varepsilon^m)^4))^2d\beta dxd\tau.
	\end{align}
	Here we use $\tilde{\psi}=\widetilde{T}^4$ in the above equality. Taking the estimates \eqref{eq:leftest}-\eqref{eq:i6est} into \eqref{eq:i123456} leads to the estimate
	\begin{align*}
		&\frac{1}{10}\|T_\varepsilon^m(t)\|_{L^5(\Omega)}^5 + \frac{1}{2}\|\psi_\varepsilon^m(t)\|^2_{L^2(\Omega\times\mathbb{S}^2)} + \frac{16}{25}\int_0^t \|\nabla (T_\varepsilon^m)^{\frac{5}{2}}\|_{L^2(\Omega)}^2 d\tau \\
		&\quad+ \frac{1}{\varepsilon^2} \int_0^t \iint_{\Omega\times\mathbb{S}^2} (\psi_\varepsilon^m - \mathbb{P}_m((T_\varepsilon^m)^4))^2 d\beta dxd\tau \\
		&\quad +\frac{2\alpha - \alpha^2}{2\varepsilon} \int_0^t \iint_{\Sigma_+} \beta \cdot n (\psi_\varepsilon^m - \psi_b)^2 d\beta d\sigma_x d\tau \nonumber \\
		&\qquad\le \frac{1}{5} \|T_{\varepsilon0}\|_{L^5}^5 + \frac{1}{2}\|\psi_{\varepsilon0}\|_{L^2(\Omega\times\mathbb{S}^2)}^2+ \frac{8}{5}\|\widetilde{T}\|_{L^5(\Omega)}^5 + 2\pi \|\widetilde{T}\|_{L^8(\Omega)}^8 \nonumber \\
		&\qquad+ 8\pi \varepsilon^2 \int_0^t \iint_{\Omega} | \nabla \widetilde{T}^4|^2  dxd\tau + \frac{1}{5} \int_0^t\int_{\Omega} \|T_\varepsilon^m\|_{L^5(\Omega)}^5 + \frac{4}{5}\int_0^t\|\Delta \widetilde{T}^4\|_{L^{\frac{5}{4}}(\Omega)}^{\frac{5}{4}}d\tau \nonumber\\
		&\qquad+ \frac{4}{5} \|\widetilde{T}\|_{L^5(\Omega)}^5 + \frac{64}{25}\int_0^t \|\nabla \widetilde{T}^{\frac{5}{2}}\|_{L^2(\Omega)}^2d\tau.
	\end{align*}
	Since $\widetilde{T}$ is the solution of the Laplace's equation \eqref{eq:Gequation}, it is smooth and thus the terms including $\widetilde{T}$ of the above inequality is bounded. We can apply Gronwall's inequality to obtain
	\begin{align}\label{eq:gl2est}
		&\|T_{\varepsilon}^m(t)\|_{L^5(\Omega))}^5 + \|\psi_\varepsilon^m(t)\|_{L^2(\Omega\times\mathbb{S}^2)}^2 + \int_0^t \|\nabla (T_\varepsilon^m)^{\frac{5}{2}}\|_{L^2(\Omega)}^2 d\tau \nonumber\\
		&\qquad+ \frac{1}{\varepsilon^2} \int_0^t \|\psi_\varepsilon^m - \mathbb{P}_m((T_\varepsilon^m)^4)\|_{L^2(\Omega\times\mathbb{S}^2)}^2 d\tau \nonumber \\
		&\qquad+\frac{2\alpha - \alpha^2}{2\varepsilon} \int_0^t \|\psi_\varepsilon^m - \psi_b\|_{L^2(\Sigma_+;|n\cdot\beta| d\beta d\sigma_x)}^2 d\tau \nonumber\\
		&\quad \le C e^{Ct}(\|T_{\varepsilon0}\|_{L^5}^5 + \|\psi_{\varepsilon0}\|_{L^2(\Omega\times\mathbb{S}^2)}^2),
	\end{align}
	\tcb{where $C$ depends on $\widetilde{T}.$}

	From this estimate, we can get the same bounds on $T_\varepsilon^m$ and $\psi_\varepsilon^m$ inside the domain, i.e. \eqref{eq:unibd1}-\eqref{eq:unibd4} still hold. We can follow the same proof of Theorem \ref{thm:existencet} to get 
	\begin{align}
		&T_\varepsilon^m \to T_\varepsilon \text{ almost everywhere}, \label{eq:gl2strongt}\\
		&\psi_\varepsilon^m \rightharpoonup \psi_\varepsilon \text{ weakly in }L^2_{\loc}([0,\infty);L^2(\Omega\times\mathbb{S}^2)),\label{eq:gl2strongpsi}
	\end{align}
	and that $(T_\varepsilon,\psi_\varepsilon)$ satisfies \eqref{eq:weakd1}-\eqref{eq:weakd2}.

	Next we consider the boundary conditions. According to \eqref{eq:gl2est}, we have
	\begin{align*}
		(T_\varepsilon^m)^{\frac{5}{2}} \text{ is uniformly bounded in }L^2_{\loc}([0,\infty);{H}^1(\Omega)]),
	\end{align*}
	so by the Rellich–Kondrachov emdedding theorem, as $m\to\infty$,
	\begin{align*}
		(T_\varepsilon^m)^{\frac{5}{2}} \to T_\varepsilon^{\frac{5}{2}}, \text{ strongly in } L^2_{\loc}([0,\infty);{H}^{1-\delta}(\Omega)])
	\end{align*}
	for $\delta>0$ small. We can thus use the continuity of the trace operator to get
	\begin{align}\label{eq:strongtracethm2}
		\tcb{\gamma^1 (T_\varepsilon^m)^{\frac{5}{2}} = T_b^{\frac52} \to \gamma^1 T_\varepsilon^{\frac52}=T_b^{\frac52},}
		\text{ strongly in }L^2_{\loc}([0,\infty);{H}^{\frac{1}{2}-\delta}(\Omega)]).
	\end{align}
	Therefore, we get
	\begin{align*}
		\gamma^1 T_\varepsilon = T_b.
	\end{align*}
	To pass to the limit on the boundary of $\psi_\varepsilon^m$, we learn from Lemma \ref{lm:trace} that 
	\begin{align*}
		\|\psi_\varepsilon^m|_{\partial\Omega}\|_{L^2(\mathbb{R}_+\times\Omega\times\mathbb{S}^2,|n\cdot \beta|d\beta d\sigma_xdt)}
	\end{align*}
	is bounded.
 
	 Therefore, there exists a subsequence $\{\psi_\varepsilon^{m_k}\}_{k>0}$ such that 
	\begin{align*}
		\gamma^2 \psi_\varepsilon^{m_k} \rightharpoonup \overline{\gamma^2 \psi_\varepsilon^{m_k}} \text{ weakly in } L^2_{\loc}([0,\infty);L^2(\Sigma;|n\cdot\beta| d\beta d\sigma_x d\tau)).
	\end{align*}
	We can thus take the weak limit in \eqref{eq:bpsigl} to obtain
	\begin{align}\label{eq:Mbd}
		 \overline{\gamma^2 \psi_\varepsilon^{m_k}} |_{\Sigma_-} = \alpha \psi_b + L \overline{\gamma^2 \psi_\varepsilon^{m_k}} |_{\Sigma_+}.
	\end{align}
	To show $ \overline{\gamma^2 \psi_\varepsilon^{m_k}} =\gamma \psi_\varepsilon$, we use the fact that $(T_\varepsilon,\psi_\varepsilon)$ solves
	\begin{align*}
	 	\varepsilon \partial_t \psi_\varepsilon + \beta \cdot \nabla \psi_\varepsilon = -\frac{1}{\varepsilon}(\psi_\varepsilon - T_\varepsilon^4).
	 \end{align*} 
	We apply test function $\rho \in C^\infty([0,\infty)\times\Omega\times\mathbb{S}^2)$
	on the above equation and deduce
	\begin{align*}
		\iint_{\Omega\times\mathbb{S}^2}&\psi_\varepsilon(t) \rho(t) d\beta dx - \iint_{\Omega}\psi_{\varepsilon0} \rho(0) d\beta dx - \int_0^t\iint_{\Omega\times\mathbb{S}^2} \psi_\varepsilon \partial_t \rho d\beta dxd\tau \\
		&- \frac{1}{\varepsilon} \int_0^t\int_{\Omega\times\mathbb{S}^2} \psi_\varepsilon \beta \cdot \nabla \rho d\beta dxd\tau + \frac{1}{\varepsilon}\int_0^t \iint_{\Sigma} (n\cdot\beta) \gamma^2 \psi_\varepsilon \rho d\beta dxd\tau\\
		& = -\frac{1}{\varepsilon^2}\int_0^t\int_{\Omega\times\mathbb{S}^2}(\psi_\varepsilon^m - \mathbb{P}_m((T_\varepsilon)^4)) \rho d\beta dxd\tau.
	\end{align*}
	We can also apply the same test function on the equation \eqref{eq:gl22} and get 
	\begin{align*}
		\iint_{\Omega\times\mathbb{S}^2}&\psi_\varepsilon^m(t) \rho(t) d\beta dx - \iint_{\Omega}\psi_{\varepsilon0}^{m} \rho(0) d\beta dx - \int_0^t\iint_{\Omega\times\mathbb{S}^2} \psi_\varepsilon^m \partial_t \rho d\beta dxd\tau \\
		&- \frac{1}{\varepsilon} \int_0^t\int_{\Omega\times\mathbb{S}^2} \psi_\varepsilon^m \beta \cdot \nabla \rho d\beta dxd\tau + \frac{1}{\varepsilon}\int_0^t \iint_{\Sigma} (n\cdot\beta) \gamma^2\psi_\varepsilon^m \rho d\beta dxd\tau\\
		& = -\frac{1}{\varepsilon^2}\int_0^t\int_{\Omega\times\mathbb{S}^2}(\psi_\varepsilon^m - \mathbb{P}_m((T_\varepsilon^m)^4)) \rho d\beta dxd\tau.
	\end{align*}
	Passing $m\to \infty$ in the above equation and comparing with the previous one lead to
	\begin{align*}
		\overline{\gamma^2 \psi_\varepsilon^m} = \gamma^2 \psi_\varepsilon.
	\end{align*}
	This combing with \eqref{eq:Mbd} implies that $\psi_\varepsilon$ satisfies
	\begin{align}
		 {\gamma^2 \psi_\varepsilon} |_{\Sigma_-} = \alpha \psi_b + L \gamma^2 \psi_\varepsilon |_{\Sigma_+},
	\end{align}	
	on the boundary.
 The energy inequality can be shown as in the proof of Theorem \ref{thm:existencet}, except that here we need to use
	\begin{align*}
		\int_0^t \|\psi_\varepsilon - \psi_b \|_{L^2(\Sigma_+;|n\cdot \beta|d\beta d\sigma_x)} d\tau \le \liminf_{m\to\infty} \int_0^t \|\psi_\varepsilon^m - \psi_b \|_{L^2(\Sigma_+;|n\cdot \beta|d\beta d\sigma_x)} d\tau,
	\end{align*}
	which is due to the weak convergence of $\psi_\varepsilon^m$ on the boundary. 

\end{proof} 

\begin{rem}
	In the above proof we assume the boundary condition to be well prepared $\psi_b=T_b^4$. \tcb{When the boundary condition is not well-prepared, a similar estimate on} the term \eqref{eq:i3est} gives
	\begin{align*}
		I_3 =& - \frac{1}{2\varepsilon} \int_0^t\iint_{\Omega\times\mathbb{S}^2} \beta \cdot \nabla (\psi_\varepsilon^m - \widetilde{\psi})^2 d\beta dx d\tau = -\frac{1}{2\varepsilon} \int_0^t \iint_{\Sigma} \beta \cdot n (\psi_\varepsilon^m - \widetilde{\psi})^2 d\beta d\sigma_xd\tau \nonumber \\
		=& -\frac{1}{2\varepsilon} \int_0^t \iint_{\Sigma_+} \beta \cdot n (\psi_\varepsilon^m - \widetilde{\psi})^2 d\beta d\sigma_x d\tau  -\frac{1}{2\varepsilon} \int_0^t \iint_{\Sigma_-} \beta \cdot n (\psi_\varepsilon^m - \widetilde{\psi})^2 d\beta d\sigma_x d\tau \nonumber\\
		=&-\frac{1}{2\varepsilon} \int_0^t \iint_{\Sigma_+} \beta \cdot n (\psi_\varepsilon^m - T_b^4)^2 d\beta d\sigma_x d\tau  \nonumber\\
		&-\frac{1}{2\varepsilon} \int_0^t \iint_{\Sigma_-} \beta \cdot n \left(\alpha(\psi_b - T_b^4)+(1-\alpha)(\psi_\varepsilon^m(\beta') -T_b^4) \right)^2 d\beta d\sigma_x d\tau \nonumber\\
		\le&-\frac{(2\alpha-\alpha^2)}{2\varepsilon} \int_0^t \iint_{\Sigma_+} \beta \cdot n (\psi_\varepsilon^m - T_b^4)^2 d\beta d\sigma_x d\tau \nonumber\\ 
		&+\frac{\alpha^2}{2\varepsilon} \int_0^t \iint_{\Sigma_+} \beta \cdot n (\psi_b - T_b^4)^2 d\beta d\sigma_x d\tau \nonumber\\
		&+\frac{2\alpha (1-\alpha)}{2\varepsilon} \int_0^t \iint_{\Sigma_+} \beta \cdot n (\psi_\varepsilon^m - T_b^4)(\psi_b-T_b^4) d\beta d\sigma_x d\tau\nonumber \\
		=& -\frac{(2\alpha-\alpha^2)}{2\varepsilon}  \int_0^t \iint_{\Sigma_+} \beta \cdot n \left(\psi_\varepsilon^m-T_b^4- \frac{1-\alpha}{2-\alpha}(\psi_b - T_b^4)\right)^2 d\beta d\sigma_x d\tau \nonumber\\
		&+\frac{\alpha}{2(2-\alpha)}  \int_0^t \iint_{\Sigma_+} \beta \cdot n (\psi_b - T_b^4)^2 d\beta d\sigma_x d\tau.
	\end{align*}
	So the estimate \eqref{eq:estdirichlet} holds with the constant $C$ depending on $\varepsilon$. We then get the existence of the weak solutions for fixed $\varepsilon$. However, the well-prepared assumption on the boundary conditions is required to study the diffusive limit in the section \ref{section3}.
\end{rem}

\subsection{Case of Robin boundary condition}
We now proceed to consider the case of Robin boundary condition \eqref{b2}.
We first give the definition of the weak solution.
\begin{definition}\label{def2}
	Assume $\partial\Omega\in C^1$. \tcr{ Let $0 \le T_{\varepsilon0} \in L^5(\Omega)$ and $0 \le\psi_{\varepsilon0} \in L^2(\Omega \times \mathbb{S}^2)$. Let $0 \le T_b \in L_{\loc}^5([0,\infty);L^5(\partial \Omega))$ and $0 \le \psi_b \in L_{\loc}^2([0,\infty); L^2(\Sigma_-;|n\cdot\beta| d\beta d\sigma_x))$.} We say that $(T_\varepsilon,\psi_\varepsilon)$ is a weak solution of the system \eqref{eq:Teps}-\eqref{eq:psieps} with initial conditions \eqref{eq:ic1}-\eqref{eq:ic2} and boundary conditions \eqref{bpsi}, \eqref{b2} if 	\begin{align*}
		&T_\varepsilon \in L_{\loc}^\infty (0,\infty;L^5(\Omega)),
		\quad T_\varepsilon^{\frac{5}{2}} \in L^2_{\loc}(0,\infty;{H}^1(\Omega)),\\
		&\psi_\varepsilon \in L_{\loc}^\infty(0,\infty; L^2(\Omega \times \mathbb{S}^2))\cap W^2_{\loc}([0,\infty)\times\Omega\times\mathbb{S}^2),
	\end{align*}
	and it solves \eqref{eq:Teps}-\eqref{eq:psieps} in the sense of distributions, i.e. for any test functions $\varphi  \in C^\infty([0,\infty),\Omega)$ and $\rho\in C^\infty([0,\infty);\Omega\times \mathbb{S}^2)$, the following equations hold:
	\begin{align}
		&-\iint_{[0,\infty)\times\Omega} \left(T_\varepsilon\partial_t \varphi + T_\varepsilon \Delta \varphi + \frac{1}{\varepsilon^2}\int_{\mathbb{S}^2} \varphi(\psi_\varepsilon - T_\varepsilon^4) d\beta \right) dxdt \nonumber\\
		&\quad- \iint_{[0,\infty)\times \partial \Omega} \varphi \cdot \frac{T_b-(\gamma^1T_\varepsilon)}{\varepsilon^r} d\sigma_x dt  + \iint_{[0,\infty)\times \partial \Omega} (\gamma^1 T_\varepsilon) n\cdot \nabla \varphi d\sigma_x dt \nonumber\\
		&\qquad= \int_{\Omega} T_{\varepsilon0} \varphi(0,\cdot)dx, \label{eq:weakr1}\\
		&-\iiint_{[0,\infty)\times\Omega \times \mathbb{S}^2} \left(\psi_\varepsilon \partial_t \rho + \frac{1}{\varepsilon} \psi_\varepsilon \beta \cdot \nabla \rho - \frac{1}{\varepsilon^2} \rho(\psi_\varepsilon-T_\varepsilon^4)\right) d\beta dxdt \nonumber \\
		&\quad +\iiint_{[0,\infty)\times \Sigma} (n \cdot \beta)\rho \cdot (\gamma^2\psi_\varepsilon) d\beta d\sigma_xdt = \iint_{\Omega\times \mathbb{S}^2} \psi_{\varepsilon0}\rho(0,\cdot) dx, \label{eq:weakr2}
	\end{align}
	where
	\begin{align}\label{eq:thm3bd}
		\gamma^2 \psi_\varepsilon \big|_{\Sigma_{-}} = \alpha \psi_b + (1-\alpha) \gamma^2 L\psi_\varepsilon\big|_{\Sigma_{+}},
	\end{align}
	with the reflection operator $L$ defined in \eqref{eq:reflectop}.
\end{definition}

Next, we prove the following existence theorem.
\begin{theorem}\label{thmexistr}
Assume $\partial\Omega \in C^1$. \tcr{ Let $0 \le T_{\varepsilon0} \in L^5(\Omega)$ and $0 \le\psi_{\varepsilon0} \in L^2(\Omega \times \mathbb{S}^2)$. Let $0 \le T_b \in L_{\loc}^5([0,\infty);L^5(\partial \Omega))$ and $0 \le \psi_b \in L_{\loc}^2([0,\infty); L^2(\Sigma_-;|n\cdot\beta| d\beta d\sigma_x))$.} Then there exists a global nonnegative weak solution $(T_\varepsilon,\psi_\varepsilon)$ of the system \eqref{eq:Teps} and \eqref{eq:psieps} with initial conditions \eqref{eq:ic1}-\eqref{eq:ic2} and boundary conditions \eqref{bpsi}, \eqref{b2}. Moreover, the following energy inequality holds for all $t>0$:
	 	\begin{align}\label{eq:energyrobin}
		&\|T_{\varepsilon}(t)\|_{L^5(\Omega))}^5 + \|\psi_\varepsilon(t)\|_{L^2(\Omega\times\mathbb{S}^2)}^2 + \int_0^t \|\nabla T_\varepsilon^{\frac{5}{2}}\|_{L^2(\Omega)}^2 d\tau \nonumber\\
		&\qquad + \frac{1}{\varepsilon^2} \int_0^t \|\psi_\varepsilon - T_\varepsilon^4\|_{L^2(\Omega\times\mathbb{S}^2)}^2 d\tau+\frac{1}{\varepsilon^r} \int_0^t \|\gamma^1 T_\varepsilon - T_b\|_{L^5(\partial\Omega)}^5d\tau \nonumber \\
		&\qquad+\frac{2\alpha - \alpha^2}{2\varepsilon} \int_0^t \|\gamma^2\psi_\varepsilon - \psi_b\|_{L^2(\Sigma_+;|n\cdot\beta| d\beta d\sigma_x)}^2 d\tau \nonumber\\
		&\quad \le C (\|T_{\varepsilon0}\|_{L^5}^5 + \|\psi_{\varepsilon0}\|_{L^2(\Omega\times\mathbb{S}^2)}^2).
	\end{align}
	Here $C$ is a positive constant independent on $\varepsilon$.
\end{theorem}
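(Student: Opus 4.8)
The plan is to run the same Galerkin construction used for Theorems~\ref{thm:existencet} and~\ref{thmexistd}, the only genuinely new ingredients being the two boundary terms that the Robin condition~\eqref{b2} and the reflection condition~\eqref{bpsi} contribute to the energy balance. Because~\eqref{b2} is a \emph{natural} boundary condition, it is not imposed on the finite--dimensional subspace: for the $T_\varepsilon$--variable I would take the $L^2(\Omega)$--orthonormal, $H^1(\Omega)$--orthogonal eigenfunctions of $-\Delta$ with a Neumann (or Robin) condition, with associated projector $\mathbb{P}_m$, and for $\psi_\varepsilon$ the basis $\{\varphi_k\}$ of $L^2(\Omega\times\mathbb{S}^2)$ and the projector $\mathbb{Q}_m$ of Lemma~\ref{lem:basis}, exactly as in the proof of Theorem~\ref{thmexistd}. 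As there, I would lift the data: let $\widetilde T$ solve $\Delta\widetilde T=0$ in $\Omega$ with $\widetilde T=T_b$ on $\partial\Omega$, set $\widetilde\psi=\widetilde T^{\,4}$ (which is $\beta$--independent), and solve the Galerkin system for $T_\varepsilon^m-\widetilde T$ and $\psi_\varepsilon^m-\widetilde\psi$, the Robin flux entering the $T$--equation weakly through the boundary term $\varepsilon^{-r}\int_{\partial\Omega}(T_b-\gamma^1 T_\varepsilon^m)\,w_j\,d\sigma_x$. Local solvability is Cauchy--Lipschitz (after the characteristic change of variables $\xi=x-\frac1\varepsilon\beta t$ for $\psi$), and globality follows from the uniform bound below.

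For the energy estimate I would multiply the $T$--equation by $(T_\varepsilon^m)^4-\widetilde T^{\,4}$ and the $\psi$--equation by $\psi_\varepsilon^m-\widetilde\psi$, integrate over $[0,t]\times\Omega$ and $[0,t]\times\Omega\times\mathbb{S}^2$, and add. This produces $\frac{d}{dt}\bigl(\tfrac15\|T_\varepsilon^m\|_{L^5}^5+\tfrac12\|\psi_\varepsilon^m-\widetilde\psi\|_{L^2}^2\bigr)$ up to lower--order terms containing $\widetilde T$ (controlled since $\widetilde T$ is smooth), the interior dissipations $\tfrac{16}{25}\|\nabla(T_\varepsilon^m)^{5/2}\|_{L^2}^2$ and $\varepsilon^{-2}\|\psi_\varepsilon^m-\mathbb{P}_m((T_\varepsilon^m)^4)\|_{L^2}^2$, and two boundary contributions. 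The term coming from $\int_\Omega\Delta T_\varepsilon^m\,((T_\varepsilon^m)^4-\widetilde T^{\,4})$ equals $\varepsilon^{-r}\int_{\partial\Omega}\bigl((T_\varepsilon^m)^4-T_b^4\bigr)(T_b-T_\varepsilon^m)\,d\sigma_x$, and the elementary convexity inequality $(a^4-b^4)(a-b)\ge|a-b|^5$ for $a,b\ge 0$ bounds it above by $-\varepsilon^{-r}\|\gamma^1T_\varepsilon^m-T_b\|_{L^5(\partial\Omega)}^5$, which is exactly the dissipative boundary term in~\eqref{eq:energyrobin}. The term coming from $\tfrac1\varepsilon\int\psi_\varepsilon^m\,\beta\cdot\nabla(\psi_\varepsilon^m-\widetilde\psi)$ is $\tfrac1{2\varepsilon}\int_\Sigma(\beta\cdot n)(\psi_\varepsilon^m-\widetilde\psi)^2$; splitting $\Sigma=\Sigma_+\cup\Sigma_-$, inserting~\eqref{bpsi}, and changing variables $\beta\mapsto\beta'$ on $\Sigma_-$ (which preserves $|\beta\cdot n|$ and, since $\psi_b$ is $\beta$--independent, leaves $\psi_b$ unchanged) turns it into $\tfrac{2\alpha-\alpha^2}{2\varepsilon}\int_{\Sigma_+}|\beta\cdot n|(\gamma^2\psi_\varepsilon^m-\psi_b)^2$ up to lower--order boundary terms absorbed by Young's inequality; the angular cross term $\tfrac1\varepsilon\int(\psi_\varepsilon^m-\widetilde\psi)\,\beta\cdot\nabla\widetilde\psi$ is reduced, using $\langle\beta\rangle=0$ and the fact that $\mathbb{P}_m((T_\varepsilon^m)^4)$ is $\beta$--independent, to a term absorbed by the collision dissipation, exactly as $I_4$ in the proof of Theorem~\ref{thmexistd}. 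Gronwall's inequality then closes~\eqref{eq:energyrobin} with $C=C(t,\Omega)$ independent of $\varepsilon$.

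The uniform bounds (interior estimates identical to~\eqref{eq:unibd1}--\eqref{eq:unibd4} and~\eqref{eq:bdl52}--\eqref{eq:bdl15}) give weakly and weakly-$*$ convergent subsequences; since $(T_\varepsilon^m)^{5/2}$ is bounded in $L^2_{\loc}([0,\infty);H^1(\Omega))$ and $\partial_t T_\varepsilon^m$ in $L^2_{\loc}H^{-2}$, the Aubin--Lions lemma yields $T_\varepsilon^m\to T_\varepsilon$ strongly in $L^2_{\loc}L^2(\Omega)$ and a.e., which lets me pass to the limit in the nonlinearity $(T_\varepsilon^m)^4$ and recover the weak formulation~\eqref{eq:weakr1}--\eqref{eq:weakr2}. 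The Robin boundary integral $\varepsilon^{-r}\int_{\partial\Omega}\varphi\,(T_b-\gamma^1T_\varepsilon)$ is recovered from the strong convergence of the trace of $(T_\varepsilon^m)^{5/2}$ (via the trace theorem, since $(T_\varepsilon^m)^{5/2}$ converges strongly in $L^2_{\loc}H^{1-\delta}$). For the reflection condition~\eqref{eq:thm3bd} I would use Lemma~\ref{lm:trace}: since $\varepsilon\partial_t\psi_\varepsilon^m+\beta\cdot\nabla\psi_\varepsilon^m=-\varepsilon^{-1}(\psi_\varepsilon^m-\mathbb{P}_m((T_\varepsilon^m)^4))$ is bounded in $L^2$, $\psi_\varepsilon^m\in W^2$ and its trace is bounded in $L^2_{\loc}(L^2(\Sigma;|n\cdot\beta|\,d\beta\,d\sigma_x))$; comparing the weak formulation of~\eqref{eq:psieps} for $\psi_\varepsilon^m$ and for the limit against test functions not vanishing on $\partial\Omega$ identifies the weak limit of $\gamma^2\psi_\varepsilon^m$ with $\gamma^2\psi_\varepsilon$, and then I can pass to the limit in~\eqref{eq:bpsigl}. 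Finally~\eqref{eq:energyrobin} follows from weak lower semicontinuity of all the norms appearing on the left, including the boundary ones (using the weak convergence of the traces).

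The routine parts — the finite--dimensional solvability, the interior a priori estimates, and the treatment of the nonlinear collision term by a.e.\ convergence — are all as in Theorems~\ref{thm:existencet} and~\ref{thmexistd}. The main obstacle, specific to this case, is the boundary analysis: on the one hand, keeping the $\varepsilon^{-r}$-- and $\varepsilon^{-1}$--weighted boundary dissipations on the favorable side of the energy identity (this is where the convexity inequality and the reflection change of variables are used), and on the other hand recovering the reflection boundary condition in the limit, which cannot rely on any compactness in the angular variable $\beta$ and must instead go through the $W^2$--trace continuity of Lemma~\ref{lm:trace} together with the weak--formulation comparison described above.
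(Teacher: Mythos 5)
Your overall architecture (Galerkin with a Robin-type basis so that \eqref{b2} enters as a natural boundary condition, lifting of the boundary data, multipliers $(T_\varepsilon^m)^4-\widetilde T^{\,4}$ and $\psi_\varepsilon^m-\widetilde\psi$, the convexity inequality $(a^4-b^4)(a-b)\ge |a-b|^5$ for the Robin boundary term, the $W^2$-trace lemma and the weak-formulation comparison to identify $\gamma^2\psi_\varepsilon$, and lower semicontinuity for \eqref{eq:energyrobin}) is the same as the paper's, and those parts are fine. The genuine gap is in your choice of lifting and the resulting treatment of the $\psi$-boundary term. You take $\widetilde T$ harmonic with $\widetilde T=T_b$ on $\partial\Omega$ and $\widetilde\psi=\widetilde T^{\,4}$, so on the boundary $\widetilde\psi=T_b^4$. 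In the Robin case the theorem does \emph{not} assume the well-prepared condition $\psi_b=T_b^4$, so after splitting $\Sigma=\Sigma_+\cup\Sigma_-$, inserting \eqref{bpsi} and reflecting $\Sigma_-$ onto $\Sigma_+$, the boundary contribution is $-\tfrac{2\alpha-\alpha^2}{2\varepsilon}\int_{\Sigma_+}|\beta\cdot n|(\psi_\varepsilon^m-T_b^4)^2$ plus a cross term $\tfrac{\alpha(1-\alpha)}{\varepsilon}\int_{\Sigma_+}|\beta\cdot n|(\psi_\varepsilon^m-T_b^4)(\psi_b-T_b^4)$ and $\tfrac{\alpha^2}{2\varepsilon}\int_{\Sigma_+}|\beta\cdot n|(\psi_b-T_b^4)^2$. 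These leftover terms are not ``lower order'': after Young's inequality they leave a contribution of size $\varepsilon^{-1}\|\psi_b-T_b^4\|_{L^2(\Sigma_+;|n\cdot\beta|)}^2$, so your Gronwall constant is $O(\varepsilon^{-1})$ rather than independent of $\varepsilon$ (this is exactly the situation discussed in the Remark after Theorem~\ref{thmexistd}, where the bound is conceded to depend on $\varepsilon$). Moreover the dissipation you produce is in terms of $\psi_\varepsilon-T_b^4$, not the term $\varepsilon^{-1}\|\gamma^2\psi_\varepsilon-\psi_b\|_{L^2(\Sigma_+;|n\cdot\beta|)}^2$ appearing in \eqref{eq:energyrobin}.

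The paper avoids this by choosing the lifting adapted to $\psi_b$ rather than to $T_b$: it takes $\widetilde T$ harmonic with boundary value $\psi_b^{1/4}$ (this is where the hypothesis that $\psi_b$ is independent of $\beta$ is used in an essential way) and $\widetilde\psi=\widetilde T^{\,4}$, so that $\widetilde\psi|_{\partial\Omega}=\psi_b$ and the computation \eqref{eq:i3est} yields exactly $-\tfrac{2\alpha-\alpha^2}{2\varepsilon}\int_{\Sigma_+}|\beta\cdot n|(\psi_\varepsilon^m-\psi_b)^2$ with no $\varepsilon^{-1}$-weighted remainder; the $T$-boundary dissipation then comes from the Robin condition itself via \eqref{EstiL5}, as in your argument. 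So to repair your proof you must change the lifting (or add the assumption $\psi_b=T_b^4$, which the Robin theorem deliberately does not make); as written, the claimed uniformity in $\varepsilon$ and the stated boundary dissipation in \eqref{eq:energyrobin} do not follow.
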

\begin{proof}
The positivity of the solution is given on Appendix \ref{Postivsolution}. To deal with the case of Robin boundary condition \eqref{b2}, for $s\ge -\frac{1}{2}$,
we define the Robin map $\mathcal{R}:H^s(\partial\Omega) \to H^{s+\frac{3}{2}}(\Omega)$ (for example, see \cite{guo2014systems}) with $f=\mathcal{R} g$ as the weak solution for the equation
\begin{align}
	&\Delta f = 0, \text{ in }\Omega, \label{eq:robinf} \\
	&\varepsilon^r n\cdot \nabla f + f = g, \text{ on } \partial\Omega.
\end{align}
We define the operator $\Delta_r$ in $L^2(\Omega)$ by 
\begin{align*}
	&\Delta_r: \mathcal{D}(\Delta_r) \subset L^2(\Omega) \to L^2(\Omega),\\
	& \Delta_r = -\Delta,\,\, \mathcal{D}(\Delta_r)=\Big\{f\in H^1(\Omega):\Delta f \in L^2(\Omega),\;\varepsilon^r n\cdot \nabla f+f=0 \text{ on }\partial\Omega \Big\}.
\end{align*}
The space $\mathcal{D}(\Delta_r)$ is equipped with the norm
\begin{align*}
	\|f\|_{\mathcal{D}(\Delta_r)} = (\|\nabla f\|_{L^2(\Omega)}^2 + \frac{1}{\varepsilon^r}\|\gamma f\|_{L^2(\partial\Omega)}^2)^{\frac{1}{2}}.
\end{align*}
for all $f,h\in H^1(\Omega)$.
With the above definitions, we can see that $T_\varepsilon - \mathcal{R}T_b \in \mathcal{D}(\Delta_r)$ satisfies the following condition on the boundary
\begin{align*}
	\varepsilon^r n\cdot \nabla (T_\varepsilon - \mathcal{R}T_b) + T_\varepsilon - \mathcal{R}T_b = 0, \text{ on }\partial \Omega.
\end{align*}

We take $\{w_m(x)\}_{m=1}^\infty$ to be an orthogonal basis in $\mathcal{D}(\Delta_r)$, for example we can take the complete set of the eigenvectors of $-\Delta_r$ as  the basis. It is also orthonormal in $L^2(\Omega)$.
We take the operator $\mathbb{P}_m$ to be 
\begin{align*}
	\mathbb{P}_m f= \sum_{k=1}^m (f,w_k)w_k(x).
\end{align*}
Here we take $\mathbb{Q}_m$ as the same with \eqref{eq:operatorQ}.


We consider the Galerkin approximate system
\begin{align}
	&\partial_t T_\varepsilon^m = \Delta_r(T_\varepsilon^m  - \mathcal{R}T_b) + \frac{1}{\varepsilon^2} \int_{\mathbb{S}^2} (\mathbb{P}_m\psi_\varepsilon^m- \mathbb{P}_m ((T_\varepsilon^m)^4))d\beta, \label{eq:gl31}\\
	&\partial_t \psi_\varepsilon^m + \frac{1}{\varepsilon}\beta \cdot\nabla \psi_\varepsilon^m = - \frac{1}{\varepsilon^2}(\psi_\varepsilon^m-\mathbb{Q}_m\mathbb{P}_m(T_\varepsilon^m)^4)).\label{eq:gl32}
\end{align}
We take $\widetilde{T}$ as defined in \eqref{eq:Gequation} but with boundary data 
\begin{align*}
	\widetilde T = \psi_b^{\frac{1}{4}}, \text{ on }\partial\Omega.
\end{align*}
 and by $\widetilde{\psi}=\widetilde{T}^4$, respectively. 
We take 
\begin{align*}
	&T_\varepsilon^m - \mathcal{R}T_b = \sum_{k=1}^m d_k(t) w_k(x),\\ 
	&\psi_\varepsilon^m - \widetilde{\psi}
	= \sum_{k=1}^m \phi_k(t)\varphi_k(x,\beta),
\end{align*}
into the above system to get an ODE system of $d_k(t)$ and $\phi_k(t)$ with $k=1,\ldots,m$. The existence of the ODE system is guaranteed by the Cauchy-Lipschitz theorem. \tcb{Note the solutions $T_\varepsilon^m,\psi_\varepsilon\ge 0$, which is proved in Appendix A.}
We next derive the energy estimate. We multiply \eqref{eq:gl31} by $(T_\varepsilon^m)^4 - \widetilde{T}^4$ and \eqref{eq:gl32} by $\psi_\varepsilon^m - \widetilde{\psi}$ and integrate over time and space to get (same as \eqref{eq:i123456})
	\begin{align}
		\int_{\Omega}&\left(\frac{(T_\varepsilon^m)^5}{5} - \widetilde{T}^4T_\varepsilon^m\right)(t) dx + \frac{1}{2} \iint_{\Omega \times \mathbb{S}^2}(\psi_\varepsilon^m - \widetilde{\psi})^2(t) d\beta dx \nonumber \\
		=&\int_{\Omega}\frac{(T_{\varepsilon0}^m)^5}{5} dx + \frac{1}{2} \iint_{\Omega \times \mathbb{S}^2}(\psi_{\varepsilon0}^m)^2 d\beta dx - \frac{1}{2\varepsilon} \int_0^t\iint_{\Omega\times\mathbb{S}^2} \beta \cdot \nabla (\psi_\varepsilon^m - \widetilde{\psi})^2 d\beta dx d\tau  \nonumber\\
		&+ \int_0^t \iint_{\Omega\times\mathbb{S}^2} \beta \cdot \nabla \widetilde{\psi} (\psi_\varepsilon^m - \widetilde{\psi}) d\beta dxd\tau+\int_0^t \int_{\Omega}  ((T_\varepsilon^m)^4 - \widetilde{T}^4)\Delta T_\varepsilon^m dxd\tau  \nonumber\\
		&- \frac{1}{\varepsilon^2}\int_0^t\iint_{\Omega\times\mathbb{S}^2}\left(\psi_\varepsilon^m - \mathbb{P}_m((T_\varepsilon^m)^4)\right) (\psi_\varepsilon^m - \widetilde{\psi} - \mathbb{P}_m ((T_\varepsilon^m)^4) + \widetilde{T}^4)d\beta dxd\tau \nonumber\\
		=& I_1+I_2+I_3+I_4+I_5+I_6. 
	\end{align}
	The terms can be treated as in the proof of Theorem \ref{thmexistd} except $I_5$, which can be estimated by
	\tcb{\begin{align*}
		I_5 =& \int_0^t \int_{\Omega}  ((T_\varepsilon^m)^4 - \widetilde{T}^4)\Delta T_\varepsilon^m dxd\tau \\
		=&-\frac{16}{25}\int_0^t \int_\Omega |\nabla (T_\varepsilon^m)^{\frac{5}{2}}|^2dxd\tau + \int_0^t\int_{\partial\Omega}((T_\varepsilon^m)^4-\widetilde{T}^4) n\cdot \nabla T_\varepsilon^m  d\sigma_x d\tau \\
		& - \int_0^T\int_{\Omega} T_\varepsilon^m \Delta \widetilde{T} dxd\tau + \int_0^t\int_{\partial\Omega} T_b n\cdot \nabla \widetilde{T}^4 d\sigma_x d\tau \\
		=& -\frac{16}{25}\int_0^t \int_\Omega |\nabla (T_\varepsilon^m)^{\frac{5}{2}}|^2dxd\tau+\int_0^t \int_{\partial\Omega} ((T_\varepsilon^m)^4-T_b^4) \frac{1}{\varepsilon^r}(-T_\varepsilon^m + T_b) d\sigma_x d\tau\\
		&+ \int_{0}^t \int_\Omega \frac{(T_\varepsilon^m)^5}{5} dxd\tau + \int_0^t\int_{\Omega} \frac{(\Delta \widetilde{T}^4)^{\frac{5}{4}}}{\frac{5}{4}}  dxd\tau \nonumber\\
		&+ \frac{4}{5} \int_{\Omega} {\widetilde{T}^5} (t)dx  + \frac{64}{25}\int_0^t \int_{\Omega} |\nabla \widetilde{T}^{\frac{5}{2}}|^2 dxd\tau \\
		=&-\frac{16}{25}\int_0^t \int_\Omega |\nabla (T_\varepsilon^m)^{\frac{5}{2}}|^2dxd\tau + \int_{0}^t \int_\Omega \frac{(T_\varepsilon^m)^5}{5} dxd\tau \nonumber\\
		&+ \int_0^t\int_{\Omega} \frac{(\Delta \widetilde{T}^4)^{\frac{5}{4}}}{\frac{5}{4}}  dxd\tau + \frac{4}{5} \int_{\Omega} {\widetilde{T}^5} (t)dx  + \frac{64}{25}\int_0^t \int_{\Omega} |\nabla \widetilde{T}^{\frac{5}{2}}|^2 dxd\tau \\
		&- \frac{1}{\varepsilon^r} \int_0^t \int_{\partial\Omega} ((T_\varepsilon^m)^2+T_b^2)(T_\varepsilon^m + T_b) (T_\varepsilon^m - T_b)^2 d\sigma_x d\tau.
	\end{align*}
	The last term appears additional to the estimate \eqref{eq:i5est} of $I_5$ for the Dirichlet case. Using the positivity of $T_\varepsilon^m$ $T_{b}$}, we get 
	\begin{align*}
		&((T_\varepsilon^m)^2+T_b^2)(T_\varepsilon^m + T_b) (T_\varepsilon^m - T_b)^2 - {(T_\varepsilon^m-T_b)^5} \\
		&\quad= {(T_\varepsilon^m - T_b)^2}2T_b(2(T_\varepsilon^m)^2-T_\varepsilon^m T_b + T_b^2) \ge 0,
	\end{align*}
	and 
	\begin{align*}
		&((T_\varepsilon^m)^2+T_b^2)(T_\varepsilon^m + T_b) (T_\varepsilon^m - T_b)^2 + {(T_\varepsilon^m-T_b)^5} \\
		&\quad= {(T_\varepsilon^m - T_b)^2}2\tcb{T_\varepsilon^m}((T_\varepsilon^m)^2-T_\varepsilon^m T_b + 2T_b^2) \ge 0,
	\end{align*}
	so that 
	\begin{align}\label{EstiL5}
		((T_\varepsilon^m)^2+T_b^2)(T_\varepsilon^m + T_b) (T_\varepsilon^m - T_b)^2 \ge  |T_\varepsilon^m-T_b|^5. 	
	\end{align}
Consequently, we obtain 
	\begin{align*}
		I_5 \le& -\frac{16}{25}\int_0^t \int_\Omega |\nabla (T_\varepsilon^m)^{\frac{5}{2}}|^2dxd\tau -\frac{1}{\varepsilon^r} \int_0^t \|\gamma^1 T_\varepsilon - T_b\|_{L^5(\partial\Omega)}^5d\tau.
	\end{align*}
	The energy inequality \eqref{eq:gl2est} then becomes
	\begin{align}\label{eq:gl3est}
		&\|T_{\varepsilon}^m(t)\|_{L^5(\Omega))}^5 + \|\psi_\varepsilon^m(t)\|_{L^2(\Omega\times\mathbb{S}^2)}^2 + \int_0^t \|\nabla (T_\varepsilon^m)^{\frac{5}{2}}\|_{L^2(\Omega)}^2 d\tau \nonumber\\
		&\qquad + \frac{1}{\varepsilon^2} \int_0^t \|\psi_\varepsilon^m - \mathbb{P}_m((T_\varepsilon^m)^4)\|_{L^2(\Omega\times\mathbb{S}^2)}^2 d\tau+\frac{1}{\varepsilon^r} \int_0^t \|\gamma^1 T_\varepsilon - T_b\|_{L^5(\partial\Omega)}^5d\tau \nonumber \\
		&\qquad+\frac{2\alpha - \alpha^2}{2\varepsilon} \int_0^t \|\gamma^2\psi_\varepsilon^m - \psi_b\|_{L^2(\Sigma_+;|n\cdot\beta| d\beta d\sigma_x)}^2 d\tau \nonumber\\
		&\quad \le C e^{Ct}\left(\|T_{\varepsilon0}\|_{L^5}^5 + \|\psi_{\varepsilon0}\|_{L^2(\Omega\times\mathbb{S}^2)}^2\right).
	\end{align}
	We can pass to the limit $m\to \infty$ and use trace theorem like in the proof of Theorem \ref{thmexistd} to get
	\begin{align*}
		&\gamma^1 T_\varepsilon^m \to \gamma^1 T_\varepsilon, \text{ strongly in }L^5_{\loc}([0,\infty);L^5(\partial\Omega))\\
		&\gamma^2\psi_\varepsilon^m \rightharpoonup \gamma^2 \psi_\varepsilon, \text{ weakly in }L^2_{\loc}([0,\infty);L^2(\Sigma;|n\cdot\beta| d\beta d\sigma_x d\tau)).
	\end{align*}
	With this we can apply test functions on the Galerkin system \eqref{eq:gl31}-\eqref{eq:gl32} and pass to the limit $m\to \infty$ to show \eqref{eq:weakr1}-\eqref{eq:weakr2} holds.

	To show the energy inequality \eqref{eq:energyrobin}, we can pass to the limit $m\to \infty$ in the above energy estimate and follow the proof of Theorem \ref{thm:existencet} and Theorem \ref{thmexistd}, except here we additionally need
	\begin{align*}
		\int_0^t \|\gamma^1 T_\varepsilon - T_b\|_{L^5(\partial\Omega)}^5d\tau \le \liminf_{m\to \infty}\int_0^t \|\gamma^1 T_\varepsilon^m - T_b\|_{L^5(\partial\Omega)}^5 d\tau
	\end{align*}
	which comes from the convergence of $\gamma^1 T_\varepsilon$.
\end{proof}

\tcb{\subsection{Uniform boundness of the solutions.}
We finish this section by showing the non-negative weak solutions are uniformly bounded. We have the following lemma.
\begin{lemma}\label{lm:uniformbd}
	Assume the intial and boundary data satisfy $0\le T_{\varepsilon0}\le \gamma,0\le \psi_{\varepsilon0}\le \gamma^4$ and $0\le T_b\le \gamma,0\le \psi_b \le \gamma^4$ for some constant $\gamma>0$. Then the weak solutions $(T_\varepsilon,\psi_\varepsilon)$ to system \eqref{eq:Teps}-\eqref{eq:psieps} is uniformly bounded, i.e. $0\le T_\varepsilon\le \gamma^4,0\le \psi_\varepsilon \le \gamma^4.$
\end{lemma}
The proof of the above lemma is given in Appendix A.}

\section{Passage to the limit with weak compactness method}\label{section3}

Now we state the main theorem of this paper.
\begin{theorem}\label{LimitProof}
Consider a family of nonnegative weak solution $\left(T_\varepsilon,\psi_\varepsilon \right)$ of \eqref{eq:Teps}-\eqref{eq:psieps} with initial conditions \eqref{eq:ic1}-\eqref{eq:ic2} and boundary condition \eqref{bpsi} for $\psi_\varepsilon$ and boundary conditions \eqref{b1}, or \eqref{b3}  or \eqref{b2} for $T_\varepsilon$, defined in Definition \ref{df1},  Definition \ref{def3} and  Definition  \ref{def2}, respectively. Assume the nonnegative initial data satisfy 
\[
\|T_{\varepsilon0}-\overline{T}_{0}\|_{L^{5}(\Omega)} \rightarrow 0\quad \text{and}\quad \|\psi_{\varepsilon0}-\overline{T}_0^4\|_{L^{2}(\Omega\times\mathbb{S}^{2})} \rightarrow 0,\; \quad\text{as} \; \varepsilon \rightarrow 0,
\]
where $\overline{T}_0 \in L^8(\Omega)$. We also suppose that the well prepared data condition \eqref{eq:wellbc} is satisfied and the boundary data satisfy $T_b,\psi_b\ge 0$. Then, when $\varepsilon \to 0$, we can extract a subsequence of $\left(T_\varepsilon,\psi_\varepsilon \right)$ such that for $t>0$,
\begin{align}
	&T_\varepsilon \to  \overline{T} \quad \text{almost everywhere}, \label{eq:tstrong}\\
	&\psi_\varepsilon \to   \overline{T}^{4} \quad \text{strongly in } L^2([0,t]\times\Omega\times\mathbb{S}^{2}).\label{eq:psistrong}
\end{align}
Moreover, $\overline{T}=\overline{T}(t,x)$ is the weak solution of the limit equation
\begin{align}\label{Limitsystem}
	\partial_{t}\left(\overline{T}+4\pi \overline{T}^{4}\right)=\Delta\left(\overline{T}+\frac{4\pi}{3}\overline{T}^{4}\right),
\end{align}
with initial condition
\begin{align}\label{eq:inicond}
 \overline{T}(0,x)=\overline{T}_{0}(x),\,& x \in \Omega,
\end{align}
and boundary condition 
\begin{align}\label{eq:Boundarycond}
	\overline{T}(t,x)= T_{b}(t,x), \, t>0 \text{ and } x\in\partial \Omega.
\end{align}
\end{theorem}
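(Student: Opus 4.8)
The plan is to pass to the limit in a recombined weak formulation, in which the singular $1/\varepsilon$ and $1/\varepsilon^2$ factors cancel, after first upgrading the available weak compactness to strong compactness of $T_\varepsilon$ by means of a velocity averaging lemma and a Young measure argument.

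First I would collect the $\varepsilon$-uniform bounds supplied by Theorems \ref{thm:existencet}, \ref{thmexistd} and \ref{thmexistr}: $T_\varepsilon$ bounded in $L^\infty_t L^5_x\cap L^5_{t,\loc}L^{15}_x$, $\nabla T_\varepsilon^{5/2}$ bounded in $L^2_{t,x}$, $\psi_\varepsilon$ bounded in $L^\infty_tL^2_{x,\beta}$, and, crucially, $r_\varepsilon:=\varepsilon^{-1}(\psi_\varepsilon-T_\varepsilon^4)$ bounded in $L^2_{t,x,\beta}$, so that $\psi_\varepsilon-T_\varepsilon^4=\varepsilon r_\varepsilon\to0$ strongly in $L^2$. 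Extracting a subsequence, $T_\varepsilon\rightharpoonup\overline T$, $T_\varepsilon^4\rightharpoonup\overline{T^4}$, $\psi_\varepsilon\rightharpoonup\overline\psi$ and $r_\varepsilon\rightharpoonup\overline r$ in the respective spaces; the relation $\psi_\varepsilon-T_\varepsilon^4\to0$ forces $\overline\psi=\overline{T^4}$, in particular independent of $\beta$. The difficulty — and the heart of the proof — is that without strong compactness of $T_\varepsilon$ one cannot identify $\overline{T^4}$ with $\overline T^4$.

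To obtain that compactness I would apply a velocity averaging lemma to \eqref{eq:psieps}, rewritten as $\varepsilon\partial_t\psi_\varepsilon+\beta\cdot\nabla_x\psi_\varepsilon=-r_\varepsilon$ with right-hand side bounded in $L^2$; this yields that the moments $\langle\psi_\varepsilon\rangle$ and $\langle\beta_i\psi_\varepsilon\rangle$ are relatively compact in $L^2_{\loc}([0,\infty)\times\Omega)$. Since $\langle\psi_\varepsilon\rangle-4\pi T_\varepsilon^4=\varepsilon\langle r_\varepsilon\rangle\to0$ strongly in $L^2$, the function $T_\varepsilon^4$ is then relatively compact in $L^2_{\loc}$; along a further subsequence $T_\varepsilon^4\to\chi$ strongly, and since $T_\varepsilon\ge0$ the elementary inequality $|a^{1/4}-b^{1/4}|\le|a-b|^{1/4}$ together with the higher integrability coming from the $L^5_tL^{15}_x$ bound gives $T_\varepsilon\to\overline T:=\chi^{1/4}$ a.e.\ and in $L^p_{\loc}$. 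Equivalently, the Young measure generated by $T_\varepsilon$ is a Dirac mass, so $\overline{T^4}=\overline T^4$. This proves \eqref{eq:tstrong}, and then $\psi_\varepsilon=T_\varepsilon^4+\varepsilon r_\varepsilon\to\overline T^4$ strongly, i.e.\ \eqref{eq:psistrong}. I expect this compactness step to be the main obstacle, because the bare $T_\varepsilon$-term carries no spatial derivative bound and the natural time derivative of $T_\varepsilon^4$ is singular in $\varepsilon$, so it is really the averaging lemma, which exploits the full kinetic structure rather than just the moments, that does the work.

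Finally I would pass to the limit in the equations. Adding \eqref{eq:weakt1} (respectively its analogues \eqref{eq:weakd1}, \eqref{eq:weakr1}) to \eqref{eq:weakt2} tested with $\rho=\varphi(t,x)$, the $\varepsilon^{-2}$ collision terms cancel and the $\varepsilon^{-1}$ transport term becomes $\varepsilon^{-1}\langle\beta\psi_\varepsilon\rangle=\langle\beta\,r_\varepsilon\rangle$, which is merely bounded in $L^2$; separately, testing \eqref{eq:weakt2} with $\rho=\beta_i\varphi(t,x)$, multiplying by $\varepsilon$ and letting $\varepsilon\to0$, the identity $\langle\beta_i\beta_j\rangle=\tfrac{4\pi}{3}\delta_{ij}$ and the strong convergence of $\psi_\varepsilon$ identify the weak limit $\langle\beta_i r_\varepsilon\rangle\rightharpoonup-\tfrac{4\pi}{3}\partial_i\overline T^4$. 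Substituting this and using the strong convergence of $T_\varepsilon$ to pass to the limit in the remaining terms, one arrives at the weak formulation of $\partial_t(\overline T+4\pi\overline T^4)=\Delta(\overline T+\tfrac{4\pi}{3}\overline T^4)$. The initial condition \eqref{eq:inicond} follows from the weak-in-time continuity of $T_\varepsilon+\langle\psi_\varepsilon\rangle$ and the convergence of the data (using $\overline T_0\in L^8$), after noting that $s\mapsto s+4\pi s^4$ is increasing. The boundary condition \eqref{eq:Boundarycond} is obtained from the traces: on $\mathbb{T}^3$ there is nothing to prove; in case \eqref{b3} one has $\gamma^1 T_\varepsilon=T_b$ identically; and in case \eqref{b2} with $r>0$ the energy inequality \eqref{eq:energyrobin} yields $\varepsilon^{-r}\int_0^t\|\gamma^1T_\varepsilon-T_b\|_{L^5(\partial\Omega)}^5\,d\tau\le C$, which together with the strong convergence of the traces of $T_\varepsilon^{5/2}$ coming from its $L^2_tH^1_x$ bound gives $\gamma^1\overline T=T_b$.
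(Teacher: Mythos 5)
Your overall architecture (uniform bounds, strong compactness of $T_\varepsilon$, recombination of the two weak formulations so the singular terms cancel, identification of the flux by testing with $\beta_i\varphi$ and multiplying by $\varepsilon$, traces for the boundary condition) parallels the paper, but the key compactness step has a genuine gap. You claim that applying the averaging lemma to $\varepsilon\partial_t\psi_\varepsilon+\beta\cdot\nabla_x\psi_\varepsilon=-r_\varepsilon$ yields that $\langle\psi_\varepsilon\rangle$ is \emph{relatively compact} in $L^2_{\loc}([0,\infty)\times\Omega)$, and from $\langle\psi_\varepsilon\rangle-4\pi T_\varepsilon^4\to0$ you deduce strong convergence of $T_\varepsilon^4$. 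But the averaging lemma available for this $\varepsilon$-scaled equation (Lemma \ref{AveragingLemma}) only gives equicontinuity of \emph{spatial} translates of the averages, uniformly in $\varepsilon$; because the time derivative in the kinetic equation carries the factor $\varepsilon$, no uniform control of time translates follows, and by the Riesz--Fr\'echet--Kolmogorov criterion spatial equicontinuity alone does not give compactness in $L^2_{t,x}$. So the pivotal assertion "$T_\varepsilon^4\to\chi$ strongly along a subsequence" is not justified as written, and everything downstream (identification $\overline{T^4}=\overline T{}^4$, a.e. convergence of $T_\varepsilon$) rests on it.

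The paper closes exactly this hole differently: it takes one factor with spatial equicontinuity (namely $T_\varepsilon^{5/2}$, bounded in $L^2_tH^1_x$ thanks to the dissipation term; in the remark after the theorem, $\langle\psi_\varepsilon\rangle$ via the averaging lemma when the Laplacian is absent) and one factor with a time-derivative bound (namely $T_\varepsilon+\langle\psi_\varepsilon\rangle$, whose $\partial_t$ is bounded in $L^1_tW^{-2,1}_x$ by the combined equation), and applies the Lions product lemma (Lemma \ref{Lemmalions1996mathematical}) to conclude only that the weak limit of the product is the product of the weak limits; strong convergence is then recovered through the Young measure argument, where the symmetrized integrand $(\lambda^{5/2}-\mu^{5/2})(\lambda-\mu)+(\lambda^4-\mu^4)(\lambda^{5/2}-\mu^{5/2})\ge0$ forces the Young measure to be a Dirac mass. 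Your argument could be repaired along similar lines (e.g.\ combine the spatial equicontinuity of $T_\varepsilon^{5/2}$ or of $\langle\psi_\varepsilon\rangle$ with the time-derivative bound on $T_\varepsilon+\langle\psi_\varepsilon\rangle$ and then use strict monotonicity of $s\mapsto s+4\pi s^4$ or the Young measure step), but some such time-compactness input is indispensable and is missing from your proposal. A secondary omission: the theorem covers the Robin condition with $r\ge0$, and your trace argument only treats $r>0$; the paper handles $r=0$ separately through the trace identity $\gamma^2\overline\psi=\psi_b=T_b^4$, which yields $\overline T=T_b$ on $\partial\Omega$.
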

\begin{proof}
The proof can be divided into two steps. First we show the convergence of the solutions of the system \eqref{eq:Teps}-\eqref{eq:psieps}, i.e. \eqref{eq:tstrong} and \eqref{eq:psistrong} hold. Then we show that the limit $\overline{T}$ satisfies the equation \eqref{Limitsystem} as well as the initial and boundary conditions.

\textbf{Convergence of the solutions for system \eqref{eq:Teps}-\eqref{eq:psieps}.}
From Theorem \ref{thm:existencet}, or Theorem \ref{thmexistd} or Theorem \ref{thmexistr}, we get, under any of the three type boundary conditions considered in the above theorems,
\begin{align}\label{eq:estC}
	&\|T_\varepsilon(t)\|_{L^5(\Omega)}^5  +\|\psi_\varepsilon(t)\|_{L^2(\Omega\times\mathbb{S}^2)}^2 + \int_0^t \|\nabla T_\varepsilon^{\frac{5}{2}}\|_{L^2}^2 d\tau \nonumber\\
	&\quad+ \int_0^t \left\|\frac{1}{\varepsilon}(\psi_\varepsilon-T_\varepsilon^4)\right\|_{L^2(\Omega\times\mathbb{S}^2)}^2d\tau \le C.
\end{align}
Here $C$ does not depend on $\varepsilon$.

Therefore, it follows that up to a subsequence,
\begin{align}
	&\psi_\varepsilon \rightharpoonup \overline{\psi}, \text{ weakly in }L^2([0,t];L^2(\Omega\times\mathbb{S}^2)), \label{eq:wk1}\\
	&T_\varepsilon \rightharpoonup \overline{T}, \text{ weakly in }L^5([0,t];L^5(\Omega)), \label{eq:wk2}\\
	&T_\varepsilon^{\frac{5}{2}} \rightharpoonup \overline{T_\varepsilon^{\frac{5}{2}}}, \text{ weakly in } L^2([0,t];H^1(\Omega)),\label{eq:wk3}\\
	&\psi_\varepsilon - T_\varepsilon^4 \to 0, \text{ strongly in }L^2([0,t];L^2(\Omega\times\mathbb{S}^2)),\label{eq:wk4} \\
	&\frac{1}{\varepsilon}(\psi_\varepsilon-T_\varepsilon) \rightharpoonup A, \text{ weakly in }L^2([0,t];L^2(\Omega\times\mathbb{S}^2)). \label{eq:wk5}
\end{align}
Here and below, $\overline{f_\varepsilon}$ denotes the weak limit of $\{f_\varepsilon\}_{\varepsilon>0}$ while $\varepsilon\to 0$.

Since
\begin{align*}
	4\pi \|T_\varepsilon^4\|_{L^2(\Omega)}=\|T_\varepsilon^4\|_{L^2(\Omega\times\mathbb{S}^2)} \le \|\psi_\varepsilon-T_\varepsilon^4\|_{L^2(\Omega\times\mathbb{S}^2)} + \|\psi_\varepsilon\|_{L^2(\Omega\times\mathbb{S}^2)},
\end{align*}
we have
\begin{align*}
	T_\varepsilon \text{ in uniformly bounded in }L^8([0,t];L^8(\Omega)).
\end{align*}
It follows that
\begin{align}\label{eq:wklp}
	T_\varepsilon^p \rightharpoonup \overline{T_\varepsilon^p}, \text{ weakly in }L^{q_1}([0,t];L^{q_2}(\Omega)),
\end{align}
for any $1\le p\le 8$ and $q_1 \le \frac{8}{p}, q_2 \le \frac{8}{p}$.

Taking the integral of \eqref{eq:psieps} over $\beta\in\mathbb{S}^2$ and adding \eqref{eq:Teps}, we get
	\begin{align}
	\partial_t \left(T_\varepsilon +\langle \psi_\varepsilon \rangle \right) + \frac{1}{\varepsilon} \nabla \cdot \langle \psi_\varepsilon \beta\rangle = \Delta T_\varepsilon.
	\end{align}
	Since for all $t>0$,
	\begin{align*}
	\|T_\varepsilon\|_{L^1([0,t];L^1(\Omega))} \le C \|T_\varepsilon\|_{L^5([0,t];L^5(\Omega))}
	\end{align*}
	is uniform bounded in $\varepsilon$, we get that $ \Delta T_\varepsilon$ is bounded in  ${L^{1}([0,t]; W^{-2,1}(\Omega))}$. Moreover, using \eqref{eq:estC}, we have  
	\begin{align*}
	\int_0^t& \int_\Omega \int_{\mathbb{S}^{2}} \frac{1}{\varepsilon} \psi_\varepsilon \beta d\beta dx d\tau \\
	=&  \int_0^t \int_\Omega \int_{\mathbb{S}^{2}} \frac{1}{\varepsilon} (\psi_\varepsilon -T_\varepsilon^4)\beta d\beta dxd\tau\\
	\le& \left(\frac{1}{\varepsilon^2}\int_0^t \int_\Omega \int_{\mathbb{S}^{2}} (\psi_\varepsilon-T_\varepsilon^4)^2 d\beta dxd\tau\right)^{\frac{1}{2}} \left(\int_0^t \int_\Omega \int_{\mathbb{S}^{2}} |\beta|^2 d\beta dxd\tau\right)^{\frac{1}{2}} \\
	\le & C \int_0^t \left\|\frac{1}{\varepsilon}(\psi_\varepsilon-T_\varepsilon^4)^2 d\beta dxd\tau\right\|_{L^2(\Omega\times\mathbb{S}^2)} \le C.
	\end{align*}
	Therefore, $\frac{1}{\varepsilon} \nabla \cdot \langle \psi_\varepsilon \beta \rangle $ is bounded in  ${L^{1}([0,t]; W^{-1,1}(\Omega))}$. Consequently, we have 
	\begin{align}\label{eq:partialtl1}
	\partial_t (T_\varepsilon + \langle \psi_\varepsilon \rangle) \in L^{1}([0,t]; W^{-2,1}(\Omega)). 
	\end{align}
	In addition, from \eqref{eq:estC}, we can get
	\begin{align}\label{eq:tpsimoml2}
		T_\varepsilon +\langle \psi_\varepsilon \rangle  \in L^2([0,t]\times\Omega).	
	\end{align}
	From \eqref{eq:wk1} and \eqref{eq:wk2}, we deduce
	\begin{align}\label{eq:tpluspsiconver}
		T_\varepsilon +  \langle \psi_\varepsilon \rangle \rightharpoonup \overline{T}+\langle \overline{\psi} \rangle, \quad \text{ weakly in } L^2([0,t]\times\Omega).
	\end{align}
	On the other hand, from \eqref{eq:wk3}, we have
	\begin{align}\label{eq:wealconvT52}
		T_{\varepsilon}^{\frac{5}{2}} \rightharpoonup	\overline{T_\varepsilon	^{\frac{5}{2}}},\quad \text{ weakly in } L^2([0,t]\times\Omega).	
	\end{align}
	Then Lemma \ref{Lemmalions1996mathematical}, with its assumptions verified by \eqref{eq:partialtl1}- \eqref{eq:wealconvT52}, implies that 
	\begin{align}\label{eq:weakprod}
		\left(T_\varepsilon	 + \langle \psi_\varepsilon	\rangle\right) T_\varepsilon^{\frac{5}{2}} 	\rightharpoonup	 \left(\overline{T} + \langle \overline{\psi} \rangle \right) \overline{T_\varepsilon	^{\frac{5}{2}}}, \quad \text{in the sense of distributions}.
	\end{align}
	Moreover, due to \eqref{eq:wk4},
	we have $\overline{\psi} - \overline{T_\varepsilon	^4}=0$. Taking this into \eqref{eq:weakprod}, we conclude that 
	\begin{align}\label{eq:13523}
	&\left(T_{\varepsilon}+\langle \psi_\varepsilon \rangle \right)T_\varepsilon^{\frac{5}{2}} \rightharpoonup (\overline{T} + 4\pi\overline{T_\varepsilon^4}) \overline{T_\varepsilon^{\frac{5}{2}}}, \quad \text{in the sense of distributions.}
	\end{align}
	On the other hand, using the weak convergence \eqref{eq:wklp} with $p=\frac{7}{2},\frac{13}{2}$ and the strong convergence \eqref{eq:wk4}, we get
	\begin{align*}
		&T_\varepsilon^{\frac{7}{2}} \rightharpoonup \overline{T_\varepsilon^{\frac{7}{2}}}, \quad \text{weakly in }L^2([0,t]\times\Omega), \\
		& \langle \psi_\varepsilon\rangle T_\varepsilon^{\frac{5}{2}} = \langle \psi_\varepsilon - T_\varepsilon^4 \rangle T_\varepsilon^{\frac{5}{2}} + 4\pi T_\varepsilon^{\frac{13}{2}}  \rightharpoonup 4\pi \overline{T_\varepsilon^{\frac{13}{2}}}, \quad \text{weakly in }L^{\frac{16}{13}}([0,t]\times\Omega).
	\end{align*}
	Therefore,
	\[\left(T_{\varepsilon}+\int_{\mathbb{S}^{2}} \psi_\varepsilon \right)T_\varepsilon^{\frac{5}{2}} \rightharpoonup \overline{T_\varepsilon^{\frac{7}{2}}}+\overline{T_\varepsilon^{\frac{13}{2}}}, \text{ in the sense of distributions}.\]
	Comparing \eqref{eq:13523} and using the uniqueness of weak limits, we arrive at
	\begin{align}\label{eq:weaklimiteq}
	\overline{T_\varepsilon^{\frac{7}{2}}}+4\pi\overline{T_\varepsilon^{\frac{13}{2}}}=\overline{T}\overline{T_\varepsilon^{\frac{5}{2}}}+4\pi\overline{T_\varepsilon^{4}}\overline{T_\varepsilon^{\frac{5}{2}}}. 		
	\end{align}
 
Next we use the family of Young measures $\{\nu_{x}\}_{x\in \Omega}$ (see \cite[Theorems 2.2,2.3]{chen2000compactness} and \cite{tartar1979compensated,balder1995lectures,ball1989version}) associated with the $\{{T_{\varepsilon_{n}},}_{n\in \mathbb{N}} \}$ to prove that \eqref{eq:weaklimiteq} implies the strong convergence of $\overline{T_\varepsilon}$ to $\overline{T}$. Indeed, we have
\begin{align}
	(T_{\varepsilon_{n}})^p \rightharpoonup \int_{\mathbb{R}} \lambda^p d\nu_x(\lambda).
\end{align}
for any $p \ge 1$.
Hence,
\begin{align*}
	&\overline{T^{\frac{7}{2}}_{\varepsilon}}+\overline{T^{\frac{13}{2}}_{\varepsilon}}=\int_{\mathbb{R}} \lambda^{\frac{7}{2}} d\nu_x(\lambda) + \int_{\mathbb{R}} \lambda^{\frac{13}{2}} d\nu_x(\lambda),\\
	 & \overline{T}\overline{T_\varepsilon^{\frac{5}{2}}}+\overline{T_\varepsilon^{4}}\overline{T_\varepsilon^{\frac{5}{2}}} = \int_{\mathbb{R}}\int_{\mathbb{R}} \mu\lambda^{\frac{5}{2}} d\nu_x(\lambda) d\nu_x(\mu) + \int_{\mathbb{R}}\int_{\mathbb{R}} \lambda^4\mu^{\frac{5}{2}} d\nu_x(\lambda) d\nu_x(\mu) .
\end{align*}
From \eqref{eq:weaklimiteq}, the above two equations equal, that is
\begin{align*}
	 \int_{\mathbb{R}}\int_{\mathbb{R}} \left(\lambda^{\frac{5}{2}}(\lambda-\mu) + \lambda^4(\lambda^{\frac{5}{2}}-\mu^{\frac{5}{2}})\right)d\nu_x(\lambda)d\nu_x(\mu) =0.
\end{align*}
Using the symmetric property of the above formula leads to
\begin{align*}
	0 \le\int_{\mathbb{R}}\int_{\mathbb{R}} \left((\lambda^{\frac{5}{2}}-\mu^{\frac{5}{2}})(\lambda-\mu) + (\lambda^4-\mu^4)(\lambda^{\frac{5}{2}}-\mu^{\frac{5}{2}})\right)d\nu_x(\lambda)d\nu_x(\mu) =0.
\end{align*}
Since the function inside the integral is strictly positive unless $\lambda=\mu$, we can conclude that $\nu_x(\lambda)$ reduces almost all points of $x$ to a family of Dirac masses concentrated at $\nu_x=\delta_{\overline{T}(x)}$. \tcb{With this and the uniformly boundness of $T_\varepsilon,\psi_\varepsilon$ according to Lemma \ref{lm:uniformbd}}, we can apply \cite[Theorem 2.3]{chen2000compactness}, and conclude that 
\begin{align}\label{eq:tc}
	T_\varepsilon \to \overline{T}, \text{ almost everywhere and strongly in } L^p([0,t]\times\Omega), 1<p\le \infty.
\end{align}
From this, we get $T_\varepsilon^4 \to \overline{T}^4$ almost everywhere. This combines \eqref{eq:wk4} implies that 
\begin{align}\label{eq:psic}
	\psi_\varepsilon \to \overline{T}^4, \quad \text{strongly in } L^2([0,t]\times\Omega \times\mathbb{S}^2).
\end{align}
	Therefore, we have proved \eqref{eq:tstrong} and \eqref{eq:psistrong}.
	\bigskip

	\textbf{The limiting system.} 
	To show the limit function $\overline{T}$ satisfy equation \eqref{Limitsystem}, we define
	\[\rho_\varepsilon = \int_{\mathbb{S}^2} \psi_\varepsilon d\beta, \; j_\varepsilon  = \frac{1}{\varepsilon} \int_{\mathbb{S}^2} \psi_\varepsilon\beta d\beta \cdot \]
	We have 
	\begin{align}\label{eq:rho}
	\partial_t \rho_\varepsilon + \nabla \cdot j_\varepsilon = -\frac{1}{\varepsilon^2} \int_{\mathbb{S}^{2}} (\psi_\varepsilon-T_\varepsilon^4) d\beta \cdot
	\end{align}
	 Comparing the equation \eqref{eq:Teps} and \eqref{eq:rho}, we get 
	\begin{align*}
	\partial_t  T_\varepsilon  - \Delta T_\varepsilon  = -(\partial_t \rho_\varepsilon +\nabla \cdot j_\varepsilon).
	\end{align*}
	Using \eqref{eq:tc}-\eqref{eq:psic} and $\rho_\varepsilon \to 4\pi \overline{T}^4$ we can pass to the limit in the above equation to get
	\begin{align}\label{eq:tbarjbar}
		\partial_t \overline{T} - \Delta \overline{T} = -4\pi\partial_t \overline{T}^4 + \nabla \cdot \overline{j_\varepsilon},
	\end{align}
	in the sense of distributions. Here we use $\overline{j_\varepsilon} $ to denote the weak limit of $j_\varepsilon$.

	Next we find the weak limit of $j_\varepsilon$.
	Using equation \eqref{eq:psieps}, we can get 
	\begin{align*}
	j_\varepsilon  = \frac{1}{\varepsilon} \int_{\mathbb{S}^2} \psi_\varepsilon\beta d\beta = \frac{1}{\varepsilon} \int_{\mathbb{S}^2} (\psi_\varepsilon-T_\varepsilon^4)\beta d\beta = -\varepsilon \partial_t\int_{\mathbb{S}^2} \psi_\varepsilon \beta d\beta -\nabla\cdot\int_{\mathbb{S}^2} (\psi_\varepsilon \beta\otimes \beta) d\beta.
	\end{align*}
	From the convergence of $\psi_\varepsilon$, we can see the 
	\begin{align*}
		\varepsilon \partial_t \int_{\mathbb{S}^2} \psi_\varepsilon \beta d\beta \rightharpoonup 0, \text{ weakly in }L^2([0,t];L^2(\Omega))
	\end{align*}
	as $\varepsilon \to 0$, and
	\begin{align*}
		\int_{\mathbb{S}^2} \psi_\varepsilon \beta \otimes\beta d\beta \to \overline{T}^4 \int_{\mathbb{S}^2} \beta \otimes\beta d\beta = \frac{4\pi}{3} \overline{T}^4 I 	
	\end{align*} 
	where $I$ is the identity matrix in $\mathbb{R}^3$. Therefore, we get
	\begin{align*}
		\nabla \cdot j_\varepsilon \rightharpoonup -\Delta (4\pi T_\varepsilon^4), \text{ in the sense of distributions.}
	\end{align*}
	It follows from this and \eqref{eq:tbarjbar} that 
	\begin{align*}
		\partial_t \overline{T} -\Delta \overline{T} = -4\pi \partial_t \overline{T}^4 +\frac{4\pi}{3} \Delta \overline{T}^4
	\end{align*}
	holds in the sense of distributions, i.e. equation \eqref{Limitsystem} holds.

	\bigskip
	\textbf{The initial condition.} Next we show the initial condition \eqref{eq:inicond} of the limit system \eqref{Limitsystem} holds in a weak sense. We consider
	\begin{align*}
		\int_0^t \int_{\Omega} \left(\frac{1}{\varepsilon} \int_{\mathbb{S}^2} \psi_\varepsilon \beta d\beta\right)^2 dxd\tau =& \int_0^t \int_{\Omega} \left(\frac{1}{\varepsilon} \int_{\mathbb{S}^2} (\psi_\varepsilon- T_\varepsilon^4) \beta d\beta\right)^2 dxd\tau \\
		\le&\frac{1}{\varepsilon^2}\int_0^t \int_{\Omega}  \int_{\mathbb{S}^2} \left(\psi_\varepsilon-T_\varepsilon^4 \right)^2 d\beta  dxd\tau 
	\end{align*}
	which is uniformly bounded due to \eqref{eq:estC}. This leads to
	\begin{align*}
		\partial_t (T_\varepsilon + \langle \psi_\varepsilon \rangle) \in L^2([0,t];H^{-2}(\Omega)),
	\end{align*}
	which combing with \eqref{eq:tc}-\eqref{eq:psic} implies that 
	\begin{align*}
		\overline{T} + 4\pi \overline{T}^4 \in C_w([0,t];L^2(\Omega)).
	\end{align*}
	Consequently, we get
	\[
	\overline{T}(t=0)+4\pi \overline{T}^4(t=0)= \lim_{\varepsilon\to0} \left(T_{\varepsilon0}+<\psi_{\varepsilon0}>\right)=\overline{T}_{0}+4\pi \overline{T}_{0}^4.
	\]
	Hence, from \eqref{eq:wellinitial} it follows 
	\begin{align*}
		\overline{T}(t=0)  = \overline{T}_0  = \lim_{\varepsilon\to0} T_{\varepsilon0},
	\end{align*}
	in a weak sense.

\bigskip
Now let us deal with the boundary conditions.

\textbf{Dirichlet boundary condition.}
For the boundary condition of $\overline{T}$, due to \eqref{eq:wk3} and \eqref{eq:tc}, there exists a subsequence $\{T_{\varepsilon_k}\}_{k=1}^\infty$ satisfying 
	\begin{align*}
		T_{\varepsilon_k}^{\frac{5}{2}} \to \overline{T}^{\frac{5}{2}}, \text{ strongly in } L^2([0,t];H^{1-\delta}(\Omega))
	\end{align*}
	for $0<\delta<\frac12$ small.
	Then we can use the continuity of the trace operator to get
	\begin{align}\label{eq:traceconv}
		\gamma^1 T_{\varepsilon_k}^{\frac{5}{2}}\to \gamma^1 \overline{T}^{\frac{5}{2}}, \text{ strongly in } L^2([0,t];L^2(\partial\Omega)).
	\end{align}
	For the Dirichlet boundary condition \eqref{b3}, we have $\gamma^1 T_\varepsilon = T_b$, hence
	\begin{align*}
		\gamma^1 \overline{T} = T_b.
	\end{align*}
	This verifies \eqref{eq:Boundarycond} in the Dirichlet case. 

	\textbf{Robin boundary condition with $r>0$.} 
	We can use the following inequality from the energy inequality \eqref{eq:energyrobin}:
	\begin{align*}
		\frac{1}{\varepsilon^r} \int_0^t \|\gamma^1 T_\varepsilon - T_b\|_{L^5(\partial\Omega)}^5d\tau \le C,
	\end{align*}
	to deduce that 
	\begin{align*}
		\gamma^1 T_\varepsilon \to T_b, \text{ strongly in }L^5([0,t];L^5(\partial\Omega)).
	\end{align*}
	This combines \eqref{eq:traceconv} leads to 
	\begin{align*}
		\gamma^1 \overline{T} = T_b.
	\end{align*}
	The case of Robin boundary condition \eqref{b2} with $r=0$ will be shown later.

	\textbf{Boundary condition for $\overline{\psi}$.} To show the boundary condition for $\overline{\psi}$. From \eqref{eq:estC}, we get that 
	\begin{align*}
		&\psi_\varepsilon \in L^2([0,t];L^2(\Omega\times\mathbb{S}^2)),\\
		&(\varepsilon\partial_t + \beta \cdot \nabla) \psi_\varepsilon = \frac{1}{\varepsilon} (\psi_\varepsilon - T_\varepsilon^4) \in L^2([0,t];L^2(\Omega\times\mathbb{S}^2)),
	\end{align*}
	we can use the definition of the trace operator $\gamma^2$ to get 
	\begin{align*}
		\gamma^2 \psi_\varepsilon \rightharpoonup \overline{\gamma^2 \psi_\varepsilon} \text{ weakly in }L^2([0,t];L^2(\Sigma;|n\cdot\beta|d\beta d\sigma_x)).
	\end{align*}
	To show $\overline{\gamma^2 \psi_\varepsilon} = \gamma^2 \overline{T}^{4}$, we
	multiply \eqref{eq:psieps} by $\varepsilon \rho$ with $\rho \in C^\infty([0,t]\times\Omega)$ and integrate over time and space, we get 
\begin{align*}
&\varepsilon \iint_{\Omega\times\mathbb{S}^2} \psi_\varepsilon(t) \rho(t) d\beta dx d\tau -\varepsilon \iint_{\Omega\times\mathbb{S}^2} \psi_{\varepsilon0} \rho(0) d\beta dx d\tau-\varepsilon\int_{0}^{t}\iint_{\Omega\times\mathbb{S}^2}\psi_\varepsilon\partial_{t}\rho d\beta dx\nonumber\\
&-\int_{0}^{t}\iint_{\Omega\times\mathbb{S}^2}\psi_\varepsilon\beta\cdot\nabla\rho d\beta dx d\tau+\int_{0}^{t}\iint_{\Sigma}(\beta\cdot n)\gamma^{2}\psi_\varepsilon\cdot\rho d\beta d\sigma_x d\tau\nonumber\\&=-\frac{1}{\varepsilon}\int_{0}^{t}\int_{\Omega\times\mathbb{S}^2}(\psi_\varepsilon-T_\varepsilon^4)\rho d\beta dx d\tau.
\end{align*}
We pass to the limit $\varepsilon\to 0$ in the above equation and use \eqref{eq:psic} and \eqref{eq:wk5} to get
\begin{align} \label{eq:bdbarpsiarho}
&-\int_{0}^{t}\iint_{\Omega\times\mathbb{S}^2}\overline{\psi}\beta\cdot\nabla\rho d\beta dx d\tau+\int_{0}^{t}\iint_{\Sigma}(\beta\cdot n)\overline{\gamma^{2}\psi_\varepsilon}\cdot\rho d\beta d\sigma_x d\tau\nonumber\\
&\quad=-\int_{0}^{t}\int_{\Omega\times\mathbb{S}^2}A\cdot \rho d\beta dx d\tau.
\end{align}
On the other hand, since we take the weak limit in 
\begin{align*}
	\varepsilon \partial_t \psi_\varepsilon + \beta \cdot \nabla \psi_\varepsilon = -\frac{1}{\varepsilon}(\psi_\varepsilon - T_\varepsilon^4),
\end{align*}
and get
\begin{align*}
	\beta \cdot \nabla \overline{\psi} = A.
\end{align*}
Applying the test function $\rho$ on this equation leads to 
\begin{align*} 
&-\int_{0}^{t}\iint_{\Omega\times\mathbb{S}^2}\overline{\psi}\beta\cdot\nabla\rho d\beta dx d\tau+\int_{0}^{t}\iint_{\Sigma}(\beta\cdot n)\gamma^{2}\overline{\psi}\cdot\rho d\beta d\sigma_x d\tau\nonumber\\
&\quad=-\int_{0}^{t}\int_{\Omega\times\mathbb{S}^2}A\cdot \rho d\beta dx d\tau.
\end{align*}
Comparing the above equation with \eqref{eq:bdbarpsiarho} leads to
\begin{align*}
	\gamma^2\overline{\psi} = \overline{\gamma^2\psi_\varepsilon}.
\end{align*}
On the other hand, we can use the boundary term in the energy inequality \eqref{eq:estdirichlet} or \eqref{eq:energyrobin}:
\begin{align*}
	&\qquad\frac{2\alpha - \alpha^2}{2\varepsilon} \int_0^t \|\gamma^2\psi_\varepsilon - \psi_b\|_{L^2(\Sigma_+;|n\cdot\beta| d\beta d\sigma_x)}^2 d\tau \le C,
\end{align*}
to deduce that
\begin{align*}
	\gamma^2 \psi_\varepsilon \to \psi_b, \text{ strongly in }L^2(\Sigma_+;|n\cdot\beta|d\beta d\sigma_x).
\end{align*}
It follows that
\begin{align*}
	\gamma^2 \overline{\psi}|_{\Sigma_+} = \psi_b.
\end{align*}
In addition, we can pass to the limit $\varepsilon\to 0$ in the boundary condition \eqref{bpsi} to get
\begin{align*}
	\gamma^2 \overline{\psi}|_{\Sigma_-} = \alpha \psi_b + (1-\alpha)\gamma^2 L\psi_\varepsilon|_{\Sigma_+} = \psi_b.
\end{align*}
Therefore,
\begin{align}
	\gamma^2 \overline{\psi} = \psi_b = T_b^4.
\end{align}

\textbf{Robin boundary condition with $r=0$.} 
We can use the above formula to get
\begin{align*}
	\gamma^2 \overline{T}^4 = T_b^4,
\end{align*}
from which we can deduce that
\begin{align*}
	\gamma^2 \overline{T} =T_b,
\end{align*}
i.e. \eqref{eq:Boundarycond} also holds in the case of Robin boundary condition with $r=0$. Hence we finish the proof.
\end{proof}

\begin{rem}
Here in the proof we use the continuity of $T_\varepsilon^{\frac{5}{2}}$ and Lemma \ref{Lemmalions1996mathematical} to show the strong convergence of $T_\varepsilon$. If we drop the Laplacian term in the equation \ref{eq:Teps}, we can no longer show this by the above proof. However, thanks to the averaging lemma, i.e. Lemma \ref{AveragingLemma}, we have
that for any $\eta \in C^\infty(\mathbb{S}^2)$,
\begin{align}
	\left\|\int_{\mathbb{S}^2} (\psi_\varepsilon(\cdot,\cdot+y,\beta) - \psi_\varepsilon(\cdot,\cdot,\beta))\eta(\beta)d\beta\right\|_{L^2([0,t];L^2(\mathbb{T}^3))} \to 0,
\end{align}
as $y\to 0$ uniformly in $\varepsilon$.
Thus we can take $h$ to be 
\begin{align*}
	\langle \psi_\varepsilon \rangle = \int_{\mathbb{S}^2} \psi_\varepsilon d\beta
\end{align*}
instead of $(T_\varepsilon)^{\frac{5}{2}}$ in Lemma \ref{Lemmalions1996mathematical} and the equation \eqref{eq:weakprod} becomes
\begin{align}\label{eq:prodwk123}
 	\left(T_\varepsilon +\langle \psi_\varepsilon \rangle \right)\langle \psi_\varepsilon \rangle \rightharpoonup \left(\overline{T} + \langle \overline{\psi} \rangle \right)  \langle \overline{\psi} \rangle.
\end{align} 
Due to the strong convergence in \eqref{eq:wk4},
\begin{align*}
	\langle \psi_\varepsilon - T_\varepsilon^4 \rangle \to 0,
\end{align*}
which leads to
\begin{align*}
	 &T_\varepsilon \langle \psi_\varepsilon \rangle  =  T_\varepsilon \langle \psi_\varepsilon - T_\varepsilon^4 \rangle + 4\pi T_\varepsilon^5 \rightharpoonup \langle A \rangle \overline{T} + 4\pi \overline{T_\varepsilon^5}, \\
	 &\langle \psi_\varepsilon \rangle \langle \psi_\varepsilon \rangle = \langle \psi_\varepsilon - T_\varepsilon^4 \rangle \left(\langle \psi_\varepsilon - T_\varepsilon^4 \rangle + 4\pi T_\varepsilon^4\right) +4\pi T_\varepsilon^4 \langle \psi_\varepsilon - T_\varepsilon^4 \rangle + 16\pi^{2}T_\varepsilon^8 \\
	 &\qquad \rightharpoonup \langle A\rangle (\langle A\rangle + 4\pi \overline{T_\varepsilon^4}) + 4\pi \overline{T_\varepsilon^4}\langle A\rangle + 16\pi^2 \overline{T_\varepsilon^8},
\end{align*}
weakly in $L^2_{\loc}([0,\infty);L^2(\mathbb{T}^3))$ as $\varepsilon \to 0$. Thus we can also pass to the limit $\varepsilon \to 0$ in \eqref{eq:prodwk123} and get the same limit with
\begin{align*}
	\langle A \rangle & \overline{T} +  4\pi \overline{T_\varepsilon^5} + \langle A\rangle(\langle A\rangle + 4\pi \overline{T_\varepsilon^4})+4\pi \overline{T_\varepsilon^4}\langle A\rangle + 16\pi^2 \overline{T_\varepsilon^8}\\
	 =& (\overline{T} + (\langle A\rangle + 4\pi \overline{T_\varepsilon^4}))(\langle A\rangle + 4\pi \overline{T_\varepsilon^4}),
\end{align*}
which implies that
\begin{align*}
	 4\pi \overline{T_\varepsilon^5} + 16\pi^2 \overline{T_\varepsilon^8} = \overline{T}4\pi \overline{T_\varepsilon^4} + 16\pi^2 \overline{T_\varepsilon^4}\cdot\overline{T_\varepsilon^4}.
\end{align*}
We can also apply the Young measure theory to get 
\begin{align*}
	4\pi& \int_{\mathbb{R}} \lambda^5 d\nu_x(\lambda) +  16\pi^2 \int_{\mathbb{R}} \lambda^8 d\nu_x(\lambda)\\
	=& 4\pi \int_{\mathbb{R}} \int_{\mathbb{R}} \lambda \mu^4 d\nu_x(\lambda)d\nu_x(\mu) + 16\pi^2 \int_{\mathbb{R}}\int_{\mathbb{R}} \lambda^4\mu^4 d\nu_x(\lambda) d\nu_x(\mu).
\end{align*}
It implies that
\begin{align*}
	4\pi& \int_{\mathbb{R}}\int_{\mathbb{R}} \lambda(\lambda^4-\mu^4)  d\nu_x(\lambda) d\nu_x(\mu) + 16\pi^2 \int_{\mathbb{R}}\int_{\mathbb{R}} \lambda^4(\lambda^4-\mu^4)  d\nu_x(\lambda) d\nu_x(\mu) \\
	=& 4\pi \int_{\mathbb{R}}\int_{\mathbb{R}} (\lambda-\mu)(\lambda^4-\mu^4)  d\nu_x(\lambda) d\nu_x(\mu) + 16\pi^2 \int_{\mathbb{R}}\int_{\mathbb{R}}(\lambda^4-\mu^4)^2  d\nu_x(\lambda) d\nu_x(\mu) \\
	=&0.
\end{align*}
Therefore, $\nu_x$ is concentrated at $\delta_{T_\varepsilon(x)}$. We can thus conclude that \eqref{eq:tc} holds. Therefore, the above theorem also holds for the system
\begin{align*}
	\partial_t T_\varepsilon =~&\frac{1}{\varepsilon^2}(\langle \psi_\varepsilon\rangle - 4\pi T_\varepsilon^4), \\
	\partial_t \psi_\varepsilon + \frac{1}{\varepsilon} \beta \cdot \nabla \psi_\varepsilon =~& -\frac{1}{\varepsilon^2}(\psi_\varepsilon - T_\varepsilon^4).
\end{align*} 
\end{rem}

\section{The relative entropy method}\label{section4}
The compactness method gives a clear justification of the diffusive limit of the system \eqref{eq:Teps}-\eqref{eq:psieps}. In this section, we give the rate of convergence of the diffusive limit under assumption on regularity of the limit system \eqref{hgm1.0}.

 
 We will introduce a relative entropy function to compare the solutions between \eqref{eq:Teps}-\eqref{eq:psieps} and \eqref{hgm1.0}. The difference of their solutions are estimated using this relative entropy function.


To compare solutions of the equations \eqref{eq:Teps}-\eqref{eq:psieps} and \eqref{hgm1.0}, we notice that the limit system \eqref{hgm1.0} does not include the equation for $\psi_\varepsilon$. To use the relative entropy method, we define $\overline{\psi}$ as follows 


\begin{align*}
\overline{\psi} = \overline{T}^4 - \varepsilon \beta \cdot \nabla \overline{T}^4 -\varepsilon^2 \partial_t \overline{T}^4 + \varepsilon^2 \beta \cdot \nabla (\beta \cdot  \nabla \overline{T}^4).
\end{align*}
so that $\overline{T}$ and $\overline{\psi}$ satisfies
\begin{align}
\partial_t \overline{T} =~& \Delta \overline{T} + \frac{1}{\varepsilon^2} \int_{\mathbb{S}^{2}} ( \overline{\psi}-\overline{T}^4) d\beta, \label{eq_1}\\
\partial_t \overline{\psi} + \frac{1}{\varepsilon} \beta \cdot \nabla \overline{\psi} =~& -  \frac{1}{\varepsilon^2}(\overline{\psi}-\overline{T}^4)+\overline{R}. \label{eq_2}
\end{align}
where
\begin{align*}
\overline{R} =& \partial_t \overline{\psi} + \frac{1}{\varepsilon} \beta \cdot \nabla \overline{\psi} + \frac{1}{\varepsilon^2} (\overline{\psi} - \overline{T}^4) \\
=& \partial_t \overline{T}^4 + \frac{1}{\varepsilon} \beta \cdot \nabla \overline{T}^4-\beta \cdot \nabla (\beta \cdot  \nabla \overline{T}^4) - \frac{1}{\varepsilon} \beta \cdot \nabla \overline{T}^4 -\partial_t \overline{T}^4 +\beta \cdot \nabla (\beta \cdot  \nabla \overline{T}^4) \\
&-\varepsilon \beta \cdot \nabla \partial_t \overline{T}^4+\varepsilon \beta \cdot \nabla (-\partial_t \overline{T}^4+\beta \cdot \nabla (\beta \cdot  \nabla \overline{T}^4)) 
-\varepsilon^2\partial_t^2 \overline{T}^4+\varepsilon^2 \beta \cdot \nabla (\beta \cdot  \nabla \overline{T}^4) \\
=&\varepsilon \beta \cdot \nabla (-2 \partial_t \overline{T}^4+ \beta \cdot \nabla (\beta \cdot  \nabla \partial_t\overline{T}^4)) - \varepsilon^2(\partial_t \overline{T}^4-\beta \cdot \nabla (\beta \cdot  \nabla \partial_t \overline{T}^4)).
\end{align*}

We define the energy function
\begin{align*}
H(T_\varepsilon,\psi_\varepsilon):=\int_{\Omega}& \frac{T_\varepsilon^5}{5} dx + \iint_{\Omega\times\mathbb{S}^2} \frac{\psi_\varepsilon^2}{2}  d\beta dx.
\end{align*}
The relative energy function is defined to be
\begin{align*}
H(T_\varepsilon&,\psi_\varepsilon|\overline{T},\overline{\psi})\\
 :=& E(T_\varepsilon,\psi_\varepsilon) - E(\overline{T},\overline{\psi}) - \left\langle\frac{\delta E}{\delta T}(\overline{T},\overline{\psi}),T_\varepsilon-\overline{T}) \right \rangle - \left\langle\frac{\delta E}{\delta \psi}(\overline{T},\overline{\psi}),\psi_\varepsilon-\overline{\psi}) \right \rangle \\
=& \int_\Omega \frac{T_\varepsilon^5-\overline{T}^5-5\overline{T}^4 (T_\varepsilon - \overline{T})}{5} dx + \int_\Omega\int_{\mathbb{S}^2} \frac{(\psi_\varepsilon - \overline{\psi})^2}{2} d\beta dx.
\end{align*}

We will apply the relative entropy method to compare the solutions $(T_\varepsilon,\psi_\varepsilon)$ and $(\overline{T},\overline{\psi})$ under three different boundary conditions: in the torus \eqref{b1}, Dirichlet boundary condition \eqref{b3} and Robin boundary condition \eqref{b2}. The main result of this section is the following theorem.
\begin{theorem}\label{thmre}
	Assume  $(T_\varepsilon,\psi_\varepsilon)$ is a weak solution of the system \eqref{eq:Teps}-\eqref{eq:psieps} and  $\overline{T}$ is a strong solution of the equation \eqref{eq:Teps} with $\overline{T}\in H^2(\Omega)$. Assume the well-prepared boundary condition \eqref{eq:wellbc} holds. Suppose $ \overline{T} \ge c >0$. Then the following inequality holds:
	\begin{align}\label{eq:Heps}
	\int_\Omega& (T_\varepsilon-\overline{T})^2 + (T_\varepsilon-\overline{T})^4 \bigg|_tdx + \iint_{\Omega\times\mathbb{S}^2} (\psi_\varepsilon-\overline{\psi})^2 \bigg|_t d\beta dx  \nonumber\\
	&\le \int_\Omega (T_{\varepsilon0}-\overline{T}_0)^2 + (T_{\varepsilon0}-\overline{T}_0)^4 dx + \iint_{\Omega\times\mathbb{S}^2} (\psi_{\varepsilon0}-\overline{\psi}_0)^2 d\beta dx  + C\varepsilon^s.
	\end{align}
	Here $s=2$ for the case of torus, $s=\min\{1,r\}$ for the case of Robin boundary condition \eqref{b2} with $r>0$ and $s=1$ for the case of nonhomogeneous Dirichlet condition \eqref{b3}.

	Furthermore, if the initial data is well-prepared that \eqref{eq:wellinitial} holds, and $T_{\varepsilon0}-\overline{T}_0 \to 0$ as $\varepsilon \to 0$, then 
	$T_\varepsilon \to \overline{T}$ and $\psi_\varepsilon \to \overline{\psi}$ strongly in $L^2(\Omega)$ and $L^2(\Omega\times\mathbb{S}^2)$, respectively for any $t>0$. 
\end{theorem}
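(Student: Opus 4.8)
The plan is to run a relative-entropy (modulated-energy) argument: establish a Gronwall inequality for $t\mapsto H(T_\varepsilon,\psi_\varepsilon\mid\overline T,\overline\psi)$ with a source term of size $\varepsilon^s$, read off \eqref{eq:Heps}, and then deduce the strong convergence. Since $(T_\varepsilon,\psi_\varepsilon)$ is only a weak solution, $H$ is not directly differentiable, so I will assemble $H(t)-H(0)$ from three pieces: the energy inequality \eqref{eq:energythm1} (resp.\ \eqref{eq:estdirichlet}, \eqref{eq:energyrobin}) for $(T_\varepsilon,\psi_\varepsilon)$; the exact energy identity for the strong solution $(\overline T,\overline\psi)$ of \eqref{eq_1}--\eqref{eq_2}, obtained by multiplying \eqref{eq_1} by $\overline T^4$ and \eqref{eq_2} by $\overline\psi$; and the evolution of the cross terms $\int_\Omega\overline T^4(T_\varepsilon-\overline T)\,dx$ and $\iint_{\Omega\times\mathbb{S}^2}\overline\psi(\psi_\varepsilon-\overline\psi)\,d\beta\,dx$, in which the $\partial_t T_\varepsilon$, $\partial_t\psi_\varepsilon$ contributions are given meaning by testing the weak formulations \eqref{eq:weakt1}--\eqref{eq:weakt2} (resp.\ \eqref{eq:weakd1}--\eqref{eq:weakd2}, \eqref{eq:weakr1}--\eqref{eq:weakr2}) against the smooth functions $\overline T^4$ and $\overline\psi$, while the remaining terms involve only $\partial_t\overline T$, $\partial_t\overline\psi$ and are genuine integrals. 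Adding these with the correct signs turns the comparison of dissipations into the relative dissipation plus controllable remainders.

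The two singular groups organize as expected. The $\varepsilon^{-2}$ relaxation terms complete the square into the nonpositive relative dissipation $-\varepsilon^{-2}\int_0^t\iint_{\Omega\times\mathbb{S}^2}|(\psi_\varepsilon-T_\varepsilon^4)-(\overline\psi-\overline T^4)|^2\,d\beta\,dx\,d\tau$, plus the Taylor-type remainder $\varepsilon^{-2}\int_0^t\int_\Omega[(T_\varepsilon^4-\overline T^4)-4\overline T^3(T_\varepsilon-\overline T)]\langle\overline\psi-\overline T^4\rangle\,dx\,d\tau$; since $\langle\beta\rangle=0$, the explicit form of $\overline\psi$ gives $\langle\overline\psi-\overline T^4\rangle=\varepsilon^2(\tfrac{4\pi}3\Delta\overline T^4-4\pi\partial_t\overline T^4)$, so the $\varepsilon^{-2}$ cancels, and using $(T_\varepsilon^4-\overline T^4)-4\overline T^3(T_\varepsilon-\overline T)=(T_\varepsilon-\overline T)^2(T_\varepsilon^2+2T_\varepsilon\overline T+3\overline T^2)$ together with $\overline T\in H^2(\Omega)\hookrightarrow L^\infty$ and the uniform bounds $T_\varepsilon\in L^\infty_tL^5\cap L^5_tL^{15}\cap L^8_{t,x}$, this remainder is $\lesssim\int_0^tH\,d\tau$. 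The $\varepsilon^{-1}$ transport terms collapse to the surface integral $-\tfrac1{2\varepsilon}\int_0^t\int_\Sigma(\beta\cdot n)(\psi_\varepsilon-\overline\psi)^2$. The diffusion terms produce $-\tfrac{16}{25}\int_0^t\|\nabla(T_\varepsilon^{5/2}-\overline T^{5/2})\|_{L^2}^2\,d\tau$ plus a remainder of the form $\int_0^t\int_\Omega(T_\varepsilon^{3/2}-\overline T^{3/2})^2\,\nabla T_\varepsilon\cdot\nabla\overline T\,dx\,d\tau$ and $\int_0^t\int_\Omega(T_\varepsilon-\overline T)^2(T_\varepsilon^2+2T_\varepsilon\overline T+3\overline T^2)\Delta\overline T\,dx\,d\tau$; I will absorb these into $\eta\int_0^t\|\nabla(T_\varepsilon^{5/2}-\overline T^{5/2})\|^2\,d\tau+\int_0^tg(\tau)H(\tau)\,d\tau$ with $g\in L^1_{\loc}$, by splitting on $\{T_\varepsilon\ge c/2\}$ — where $\overline T\ge c>0$ lets $\nabla T_\varepsilon$ be controlled by $\nabla T_\varepsilon^{5/2}$ — and interpolating $T_\varepsilon-\overline T$ between $L^4$ (controlled by $H$ via convexity) and $L^{15}$ (controlled by the energy estimate).

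The boundary terms fix $s$. On the torus there is no boundary and the only $\varepsilon$-error is the residual $-\iint(\psi_\varepsilon-\overline\psi)\overline R\le\tfrac12\|\psi_\varepsilon-\overline\psi\|_{L^2}^2+\tfrac12\|\overline R\|_{L^2}^2\lesssim H+\varepsilon^2$, since the explicit formula for $\overline R$ and the regularity of $\overline T$ give $\|\overline R\|_{L^2}=O(\varepsilon)$; hence $s=2$. For Dirichlet data, $\gamma^1T_\varepsilon=T_b=\gamma^1\overline T$ kills the $T$-boundary contribution, and for the $\psi$-boundary one splits $\Sigma=\Sigma_+\cup\Sigma_-$, uses the reflection law \eqref{bpsi} for $\psi_\varepsilon$ and the fact that $\overline\psi=\overline T^4-\varepsilon\beta\cdot\nabla\overline T^4+O(\varepsilon^2)$ satisfies the same law modulo $O(\varepsilon)$ — its $O(1)$ part cancelling precisely because $\psi_b=T_b^4$ — together with $(1-\alpha)^2<1$, to bound $-\tfrac1{2\varepsilon}\int_\Sigma(\beta\cdot n)(\psi_\varepsilon-\overline\psi)^2\le-c\,\varepsilon^{-1}\|\gamma^2\psi_\varepsilon-\psi_b\|_{L^2(\Sigma_+;|n\cdot\beta|)}^2+C\varepsilon$; this yields $s=1$. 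For Robin data with $r>0$, $\overline T=T_b$ on $\partial\Omega$ so the $T$-boundary term equals $-\varepsilon^{-r}\int_{\partial\Omega}(T_\varepsilon^4-T_b^4)(T_\varepsilon-T_b)\,d\sigma_x$ plus a defect containing the Neumann trace $n\cdot\nabla\overline T$; by the pointwise inequality \eqref{EstiL5} the main part dominates $c\,\varepsilon^{-r}\|\gamma^1T_\varepsilon-T_b\|_{L^5(\partial\Omega)}^5$, against which the defect can be Young-absorbed at the cost of an $O(\varepsilon^{\min\{1,r\}})$ term, so with the $\psi$-boundary estimate we obtain $s=\min\{1,r\}$.

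Assembling the pieces gives $H(t)+(\text{nonnegative dissipation})\le H(0)+C\int_0^tg\,H\,d\tau+C\varepsilon^s$ with $g\in L^1_{\loc}$, so Gronwall yields $H(t)\le C(H(0)+\varepsilon^s)$. The two-sided bound $c_1[(T_\varepsilon-\overline T)^2+(T_\varepsilon-\overline T)^4]\le\tfrac15\bigl(T_\varepsilon^5-\overline T^5-5\overline T^4(T_\varepsilon-\overline T)\bigr)\le c_2[(T_\varepsilon-\overline T)^2+(T_\varepsilon-\overline T)^4]$, valid since $0<c\le\overline T\le\|\overline T\|_{L^\infty}$, converts this into \eqref{eq:Heps}. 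Finally, if the initial data are well prepared, \eqref{eq:wellinitial}, $T_{\varepsilon0}\to\overline T_0$, and $\overline\psi(0)=\overline T_0^4+O(\varepsilon)$ force $H(0)\to0$, so the right-hand side of \eqref{eq:Heps} tends to $0$ and $\|T_\varepsilon(t)-\overline T(t)\|_{L^2(\Omega)}^2+\|\psi_\varepsilon(t)-\overline\psi(t)\|_{L^2(\Omega\times\mathbb{S}^2)}^2\to0$ for each $t>0$. The hardest point will be the diffusion remainder: the $T_\varepsilon$-equation is not of gradient-flow type and $T_\varepsilon$ has no positive lower bound, so $-\tfrac{16}{25}\|\nabla(T_\varepsilon^{5/2}-\overline T^{5/2})\|^2$ is not a clean relative dissipation and the leftover $\int(T_\varepsilon^{3/2}-\overline T^{3/2})^2\nabla T_\varepsilon\cdot\nabla\overline T$ genuinely needs the set-splitting and interpolation above; the second delicate point is reconciling the three boundary conditions so as to extract exactly the stated rates.
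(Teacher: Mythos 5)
Your overall architecture is the same as the paper's: you assemble $H(t)-H(0)$ from the energy inequality for the weak solution, the energy identity for $(\overline T,\overline\psi)$ solving \eqref{eq_1}--\eqref{eq_2}, and the weak formulations tested against $\overline T^4$ and $\overline\psi$; you complete the square in the $\varepsilon^{-2}$ relaxation terms, use $\langle\overline\psi-\overline T^4\rangle=\varepsilon^2\bigl(\tfrac{4\pi}{3}\Delta\overline T^4-4\pi\partial_t\overline T^4\bigr)$ to cancel the singular factor, use $\|\overline R\|_{L^2}=O(\varepsilon)$, and treat the Dirichlet and Robin boundary terms exactly as the paper does (vanishing trace of $T_\varepsilon-\overline T$, the expansion $\overline\psi|_{\partial\Omega}=T_b^4+\varepsilon\overline R_b$ with the reflection law, and \eqref{EstiL5} with Young absorption for Robin), arriving at the same rates $s=2,1,\min\{1,r\}$ and the same Gronwall closure with the lower bound of Lemma~\ref{lmtg}.

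The genuine gap is precisely the step you flag as hardest: your proposed form of the diffusion remainder, $\int_0^t\int_\Omega (T_\varepsilon^{3/2}-\overline T^{3/2})^2\,\nabla T_\varepsilon\cdot\nabla\overline T\,dx\,d\tau$, together with the plan to absorb it by splitting on $\{T_\varepsilon\ge c/2\}$ and interpolating, does not close. For the weak solution the only gradient information is $\nabla T_\varepsilon^{5/2}\in L^2$; $\nabla T_\varepsilon$ is not a controlled (or even well-defined integrable) quantity on the set where $T_\varepsilon$ is small, since formally $\nabla T_\varepsilon=\tfrac25 T_\varepsilon^{-3/2}\nabla T_\varepsilon^{5/2}$, and on $\{T_\varepsilon<c/2\}$ the prefactor $(T_\varepsilon^{3/2}-\overline T^{3/2})^2$ stays of order one while $T_\varepsilon^{-3/2}$ is unbounded, so nothing in $H$ or in the dissipation dominates this piece; your sketch gives no estimate there, and the $L^4$/$L^{15}$ interpolation you invoke only concerns zero-order factors. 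The resolution in the paper is that this problematic term never has to appear: using the pointwise identity $\Delta\overline T^{5/2}=\tfrac52\overline T^{3/2}\Delta\overline T+\tfrac{15}{4}\overline T^{1/2}|\nabla\overline T|^2$ and integration by parts only on $T_\varepsilon^{5/2}\in H^1$ (see \eqref{eq:4t3d}, \eqref{eq:calt52} and \eqref{eq:4t3DeltaT}), the cross terms $-\int\bigl(T_\varepsilon^4\Delta\overline T+T_\varepsilon\Delta\overline T^4\bigr)$ reorganize exactly into $\tfrac{32}{25}\int\nabla T_\varepsilon^{5/2}\cdot\nabla\overline T^{5/2}$, which completes the square into $-\tfrac{16}{25}\int|\nabla T_\varepsilon^{5/2}-\nabla\overline T^{5/2}|^2$, plus the purely zero-order remainder $\tfrac{32}{25}\int\bigl(T_\varepsilon^{5/2}-\overline T^{5/2}-\tfrac52\overline T^{3/2}(T_\varepsilon-\overline T)\bigr)\Delta\overline T^{5/2}$, which the mean value theorem bounds by $C\int\bigl[(T_\varepsilon-\overline T)^2+(T_\varepsilon-\overline T)^4\bigr]$ using only $\overline T\ge c$ and the regularity of $\overline T$ — no lower bound on $T_\varepsilon$, no set splitting, no interpolation, and no appearance of $\nabla T_\varepsilon$. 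Until you replace your decomposition by this (or an equivalent) rearrangement, the Gronwall inequality is not established, so the proof as proposed is incomplete at its central analytic step; with that substitution the rest of your argument matches the paper (Lemma~\ref{lm1} and the boundary variants \eqref{eq:reformuladirichlet}, \eqref{eq:Hevolb}).
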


\subsection{The case of torus}
We next derive the relative entropy inequality for the case of torus $\Omega=\mathbb{T}^3$.
\begin{lemma} \label{lm1}
	Assume  $T_\varepsilon$ is a weak solution of the system \eqref{eq:Teps}-\eqref{eq:psieps}, and $\overline{T}$ is a smooth solution of the equation \eqref{hgm1.0}.  The following inequality holds:
	\begin{align}\label{eq:Hevol}
		H(&T_\varepsilon,\psi_\varepsilon | \overline{T},\overline{\psi}) \bigg|_{t}+ \frac{16}{25}\int_0^t\int_\Omega (\nabla T_\varepsilon^{\frac{5}{2}} - \nabla \overline{T}^{\frac{5}{2}})^2 dxd\tau \nonumber\\
		\le& H(T_\varepsilon,\psi_\varepsilon | \overline{T},\overline{\psi}) \bigg|_{0}   +\frac{32}{25} \int_0^t\int_\Omega (T_\varepsilon^{\frac{5}{2}} - \overline{T}^{\frac{5}{2}} - \frac{5}{2} \overline{T}^{\frac{3}{2}}(T_\varepsilon - \overline{T}))\Delta \overline{T}^{\frac{5}{2}} dxd\tau \nonumber\\
		&+\frac{1}{\varepsilon^2} \int_0^t\iint_{\Omega\times\mathbb{S}^2}\left(T_\varepsilon^4 - \overline{T}^4 - 4 \overline{T}^3(T_\varepsilon - \overline{T})\right) \left(\overline{\psi} - \overline{T}^4\right)  d\beta dxd\tau \nonumber\\
		& - \int_0^t\iint_{\Omega\times\mathbb{S}^2} (\psi_\varepsilon-\overline{\psi}) \overline{R} d\beta dxd\tau- \frac{1}{\varepsilon^2}\int_0^t\iint_{\Omega\times\mathbb{S}^2} (\psi_\varepsilon - T_\varepsilon^4 - (\overline{\psi}-\overline{T}^4))^2 d\beta dxd\tau .
	\end{align}
\end{lemma}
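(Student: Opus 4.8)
The plan is to run the classical relative entropy argument: write $H(T_\varepsilon,\psi_\varepsilon\mid\overline T,\overline\psi)\big|_t - H\big|_0$ from the definition of the relative energy, bound the $(T_\varepsilon,\psi_\varepsilon)$-contribution from above by the energy inequality \eqref{eq:energythm1}, compute the $(\overline T,\overline\psi)$-contribution and the two cross pairings exactly from the limit equations \eqref{eq_1}--\eqref{eq_2}, and then reorganize everything into \eqref{eq:Hevol}.

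Concretely, I would first record the two energy balances. Theorem \ref{thm:existencet} gives $E(T_\varepsilon,\psi_\varepsilon)\big|_t - E(T_{\varepsilon0},\psi_{\varepsilon0}) \le -\tfrac{16}{25}\int_0^t\|\nabla T_\varepsilon^{5/2}\|_{L^2}^2\,d\tau - \tfrac1{\varepsilon^2}\int_0^t\|\psi_\varepsilon-T_\varepsilon^4\|_{L^2(\Omega\times\mathbb{S}^2)}^2\,d\tau$; since the target statement is an inequality, only this bound is needed from the $\varepsilon$-side. Because $\overline T$ is smooth and $(\overline T,\overline\psi)$ solves \eqref{eq_1}--\eqref{eq_2} pointwise, multiplying \eqref{eq_1} by $\overline T^4$ and \eqref{eq_2} by $\overline\psi$, integrating and adding yields the exact identity $\tfrac{d}{d\tau}E(\overline T,\overline\psi) = -\tfrac{16}{25}\|\nabla\overline T^{5/2}\|_{L^2}^2 - \tfrac1{\varepsilon^2}\|\overline\psi-\overline T^4\|_{L^2(\Omega\times\mathbb{S}^2)}^2 + \iint_{\Omega\times\mathbb{S}^2}\overline\psi\,\overline R$, where the streaming contribution $\tfrac1{2\varepsilon}\iint\beta\cdot\nabla\overline\psi^2$ vanishes by periodicity on $\mathbb{T}^3$ and $\int\overline T^4\Delta\overline T = -\tfrac{16}{25}\|\nabla\overline T^{5/2}\|_{L^2}^2$ after one integration by parts.

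Next I would handle the two cross pairings. Testing the weak formulations \eqref{eq:weakt1}--\eqref{eq:weakt2} with the (smooth, hence admissible) functions $\overline T^4$ and $\overline\psi$ and feeding in \eqref{eq_1}--\eqref{eq_2} for the $\overline T$ and $\overline\psi$ factors, using $\partial_\tau\overline T^4 = 4\overline T^3\partial_\tau\overline T$, produces the integrated identities for $\int_\Omega\overline T^4(T_\varepsilon-\overline T)$ and $\iint_{\Omega\times\mathbb{S}^2}\overline\psi(\psi_\varepsilon-\overline\psi)$; in the $\psi$-pairing the two streaming terms combine into $-\tfrac1\varepsilon\iint\beta\cdot\nabla[\overline\psi(\psi_\varepsilon-\overline\psi)]$, which again vanishes on $\mathbb{T}^3$. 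Subtracting the limit balance and these two cross identities from the $\varepsilon$-balance according to the definition of $H$, the diffusive terms collapse, after the identities $\Delta\overline T^4 = 4\overline T^3\Delta\overline T + 12\overline T^2|\nabla\overline T|^2$, $\overline T^{3/2}\Delta\overline T^{5/2} = \tfrac52\overline T^3\Delta\overline T + \tfrac{15}4\overline T^2|\nabla\overline T|^2$ and the integration by parts $\int(\nabla T_\varepsilon^{5/2}-\nabla\overline T^{5/2})\cdot\nabla\overline T^{5/2} = -\int(T_\varepsilon^{5/2}-\overline T^{5/2})\Delta\overline T^{5/2}$, into $-\tfrac{16}{25}\int(\nabla T_\varepsilon^{5/2}-\nabla\overline T^{5/2})^2$ plus $\tfrac{32}{25}\int\big(T_\varepsilon^{5/2}-\overline T^{5/2}-\tfrac52\overline T^{3/2}(T_\varepsilon-\overline T)\big)\Delta\overline T^{5/2}$. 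The $\varepsilon^{-2}$-terms, upon repeatedly telescoping with $\overline\psi-\overline T^4$ and $\psi_\varepsilon-T_\varepsilon^4$ and completing a square, regroup into $-\tfrac1{\varepsilon^2}\iint(\psi_\varepsilon-T_\varepsilon^4-(\overline\psi-\overline T^4))^2$ plus $\tfrac1{\varepsilon^2}\iint\big(T_\varepsilon^4-\overline T^4-4\overline T^3(T_\varepsilon-\overline T)\big)(\overline\psi-\overline T^4)$, and the $\overline R$-terms add up to $-\iint(\psi_\varepsilon-\overline\psi)\overline R$. This is exactly \eqref{eq:Hevol}.

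The main obstacle is the rigor of these manipulations at the level of weak solutions, not the algebra. The weak solution only has $T_\varepsilon\in L^\infty_tL^5_x$ with $\nabla T_\varepsilon^{5/2}\in L^2_{t,x}$ and $\partial_t T_\varepsilon\in L^2_tH^{-2}_x$ (and $\psi_\varepsilon\in L^\infty_tL^2$, $\partial_t\psi_\varepsilon\in L^2_tH^{-1}$), so one must check that $\overline T^4,\overline\psi$ lie in the dual spaces needed to be valid test functions, that the maps $\tau\mapsto\int_\Omega\overline T^4(T_\varepsilon-\overline T)$ and $\tau\mapsto\iint\overline\psi(\psi_\varepsilon-\overline\psi)$ are absolutely continuous with the asserted derivatives (which uses the weak-in-time continuity of $T_\varepsilon,\psi_\varepsilon$ established in Theorem \ref{thm:existencet}), and that every integration by parts moving $\Delta$ or $\nabla$ onto the smooth factor is read directly off the weak formulation, e.g. $\int\overline T^4\Delta T_\varepsilon$ interpreted as $\int T_\varepsilon\Delta\overline T^4$, so that no regularity of $T_\varepsilon$ beyond $\nabla T_\varepsilon^{5/2}\in L^2$ is invoked. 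Once these points are secured the rest is bookkeeping with the convexity remainders of $T\mapsto T^5/5$, $T\mapsto T^{5/2}$ and $T\mapsto T^4$.
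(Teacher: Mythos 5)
Your proposal is correct and follows essentially the same route as the paper: bound the $\varepsilon$-side by the energy inequality, use the exact balance for $(\overline{T},\overline{\psi})$ including the $\overline{R}$ term, obtain the cross pairings by testing the weak formulations with $\overline{T}^4$ and $\overline{\psi}$ (the paper does this via the difference equations with a time cutoff $\theta(\tau)$, which is the same computation), and then regroup with the identities $\Delta\overline{T}^{5/2}=\tfrac52\overline{T}^{3/2}\Delta\overline{T}+\tfrac{15}{4}\overline{T}^{1/2}|\nabla\overline{T}|^2$ and $\int\overline{T}^4\Delta\overline{T}=-\tfrac{16}{25}\int|\nabla\overline{T}^{5/2}|^2$ to produce the gradient-difference square, the convexity remainders, and the completed $\varepsilon^{-2}$ square. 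Your remarks on test-function admissibility and weak-in-time continuity correspond to the paper's $\theta$-cutoff and $\delta\to0$ step, so no gap remains.
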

\begin{proof}
	First, we recall that from \eqref{eq:energythm1}, the energy function for the equations \eqref{eq:Teps}-\eqref{eq:psieps} satisfies
	\begin{align}\label{eq:ene1}
	H(T_\varepsilon,\psi_\varepsilon) \bigg|_{0}^t + \frac{16}{25}\int_0^t \int_{\Omega} \left|\nabla (T_\varepsilon)^{\frac{5}{2}}\right|^2dxd\tau + \frac{1}{\varepsilon^2}\int_0^t\int_{\Omega} \int_{\mathbb{S}^{2}} (\psi_\varepsilon-(T_\varepsilon)^4  )^2 d \beta dxd\tau \le 0.
	\end{align}
	The function $(\overline{T}, \overline{\psi})$ also satisfies a similar equality:
	\begin{align}\label{eq:ene2}
	&H(\overline{T},\overline{\psi}) \bigg|_{0}^t + \frac{16}{25}\int_0^t \int_{\Omega} \left|\nabla (\overline{T})^{\frac{5}{2}}\right|^2dxd\tau + \frac{1}{\varepsilon^2}\int_0^t\int_{\Omega} \int_{\mathbb{S}^{2}} (\overline{\psi}-\overline{T}^4  )^2 d \beta dxd\tau \nonumber\\
	&\quad= \int_0^t\iint_{\Omega\times\mathbb{S}^2} \overline{\psi}\cdot \overline{R} d\beta dxd\tau.
	\end{align}
	Next we consider the equations of the difference $(T_\varepsilon - \overline{T},\,\psi_\varepsilon-\overline{\psi})$ using the definition of weak solutions \eqref{eq:weakt1}-\eqref{eq:weakt2} :
	\begin{align}
		-\int_0^\infty&\int_\Omega \varphi_t(T_\varepsilon-\overline{T})dxdt - \int_\Omega \varphi (T_\varepsilon-\overline{T}) \bigg|_{t=0}dx \nonumber\\
		=& \int_0^\infty \int_\Omega \Delta \varphi (T_\varepsilon - \overline{T}) dxdt + \frac{1}{\varepsilon^2} \int_0^\infty \iint_{\Omega\times\mathbb{S}^2}(\psi_\varepsilon - \overline{\psi} - T_\varepsilon^4 + \overline{T}^4) \varphi d\beta dxdt, \label{eq:we1} \\
		-\int_0^\infty&\iint_{\Omega\times\mathbb{S}^2} \rho_t (\psi_\varepsilon - \overline{\psi})d\beta dxdt - \int_\Omega\int_{\mathbb{S}^2} \rho(\psi_\varepsilon- \overline{\psi}) \bigg|_{t=0} d\beta dx \nonumber\\&\quad- \frac{1}{\varepsilon} \int_0^\infty\iint_{\Omega\times\mathbb{S}^2}(\psi_\varepsilon-\overline{\psi}) \beta \cdot \nabla \rho d\beta dxdt \nonumber\\
		=& -\frac{1}{\varepsilon^2} \int_0^\infty\iint_{\Omega\times\mathbb{S}^2} (\psi_\varepsilon - \overline{\psi} - T_\varepsilon^4 + \overline{T}^4)\rho d\beta dxdt - \int_0^\infty\iint_{\Omega\times\mathbb{S}^2} \rho \overline{R} d\beta dxdt.\label{eq:we2}
	\end{align}
	We introduce the following test function
	\begin{align}\label{eq:testfun}
		\varphi = \theta(\tau) \overline{T}^4,\quad \rho = \theta(\tau) \overline{\psi},
	\end{align}
	where
	\begin{align*}
		\theta(\tau) := \left\{\begin{array}{cl}
			1, &\text{ for } 0\le \tau < t, \\
			\frac{t-\tau}{\delta} + 1, &\text{ for } t \le \tau < t+\delta, \\
			0, &\text{ for } \tau \ge t+\delta.
		\end{array}\right.
	\end{align*}
	Taking these test functions into \eqref{eq:we1}-\eqref{eq:we2} and let $\delta \to 0$, we obtain
	\begin{align*}
		\int_{\Omega} &\overline{T}^4(T_\varepsilon-\overline{T}) \bigg|_{\tau=0}^t dx  - \int_0^t \int_\Omega \partial_\tau(\overline{T}^4) (T_\varepsilon-\overline{T}) dxd\tau \\
		&= \int_0^t\int_\Omega \Delta \overline{T}^4(T_\varepsilon-\overline{T})dxd\tau + \frac{1}{\varepsilon^2} \int_0^t \int_\Omega\int_{\mathbb{S}^2} \overline{T}^4(\psi_\varepsilon-\overline{\psi}-T_\varepsilon^4+\overline{T}^4) d\beta dxd\tau. \\
		\int_\Omega &\int_{\mathbb{S}^2} \overline{\psi} (\psi_\varepsilon-\overline{\psi}) \bigg|_{\tau=0}^t d\beta dx - \int_0^t\int_\Omega\int_{\mathbb{S}^2} (\partial_\tau \overline{\psi})(\psi_\varepsilon-\overline{\psi}) d\beta dxd\tau \\
		&\qquad -\frac{1}{\varepsilon} \int_0^t\int_\Omega\int_{\mathbb{S}^2} (\psi_\varepsilon-\overline{\psi})\beta \cdot \nabla \overline{\psi} d\beta dx d\tau \\
		&= - \frac{1}{\varepsilon^2}\int_0^t\int_\Omega\int_{\mathbb{S}^2} \overline{\psi}(\psi_\varepsilon-\overline{\psi}-T_\varepsilon^4+\overline{T}^4) d\beta dxd\tau - \int_0^t \int_\Omega\int_{\mathbb{S}^2} \overline{\psi} \cdot \overline{R} d\beta dxd\tau.
	\end{align*}
	Using the equations \eqref{eq_1} and \eqref{eq_2}, the above equations become
	\begin{align*}
		\int_{\Omega}& \overline{T}^4(T_\varepsilon-\overline{T}) \bigg|_{\tau=0}^t dx  \\
		=& \int_0^t \int_\Omega 4\overline{T}^3 \Delta \overline{T} (T_\varepsilon-\overline{T}) dxd\tau 
		+ \int_0^t\int_\Omega \Delta \overline{T}^4(T_\varepsilon-\overline{T})dxd\tau \\
		& +\frac{1}{\varepsilon^2} \int_0^t\iint_{\Omega\times\mathbb{S}^2}4\overline{T}^3(\overline{\psi}-\overline{T}^4)(T_\varepsilon-\overline{T}) d\beta dxd\tau \\
		&+ \frac{1}{\varepsilon^2} \int_0^t \int_\Omega\int_{\mathbb{S}^2} \overline{T}^4(\psi_\varepsilon-\overline{\psi}-T_\varepsilon^4+\overline{T}^4) d\beta dxd\tau, \\
		\int_\Omega& \int_{\mathbb{S}^2} \overline{\psi} (\psi_\varepsilon-\overline{\psi}) \bigg|_{\tau=0}^t d\beta dx \\
		= &  \int_0^t\int_\Omega\int_{\mathbb{S}^2} (-\frac{1}{\varepsilon} \beta \cdot \nabla \overline{\psi})(\psi_\varepsilon-\overline{\psi}) d\beta dxd\tau +\frac{1}{\varepsilon} \int_0^t\int_\Omega\int_{\mathbb{S}^2} (\psi_\varepsilon-\overline{\psi})\beta \cdot \nabla \overline{\psi} d\beta dx d\tau \\
		&-\frac{1}{\varepsilon^2} \int_0^t\int_\Omega\int_{\mathbb{S}^2}(\overline{\psi}-\overline{T}^4)(\psi_\varepsilon-\overline{\psi})d\beta dxd\tau + \int_0^t \int_\Omega\int_{\mathbb{S}^2} \overline{R} (\psi_\varepsilon-\overline{\psi}) d\beta dxd\tau\\
		& - \frac{1}{\varepsilon^2}\int_0^t\int_\Omega\int_{\mathbb{S}^2} \overline{\psi}(\psi_\varepsilon-\overline{\psi}-T_\varepsilon^4+\overline{T}^4) d\beta dxd\tau - \int_0^t \int_\Omega\int_{\mathbb{S}^2} \overline{\psi} \cdot \overline{R} d\beta dxd\tau.
	\end{align*}
	Adding them together gives
\begin{align*}
		\int_{\Omega} &\overline{T}^4(T_\varepsilon-\overline{T}) \bigg|_{\tau=0}^t dx + \iint_{\Omega\times\mathbb{S}^2} \overline{\psi} (\psi_\varepsilon-\overline{\psi}) \bigg|_{\tau=0}^t d\beta dx \\
		=& \int_0^t \int_\Omega 4\overline{T}^3 \Delta \overline{T} (T_\varepsilon-\overline{T}) + \Delta \overline{T}^4(T_\varepsilon-\overline{T})dxd\tau \\
		& -\frac{1}{\varepsilon^2} \int_0^t\iint_{\Omega\times\mathbb{S}^2}(\overline{\psi}-\overline{T}^4)(T_\varepsilon^4-\overline{T}^4-4\overline{T}^3(T_\varepsilon-\overline{T}) )d\beta dxd\tau \\
		&- \frac{2}{\varepsilon^2} \int_0^t \int_\Omega\int_{\mathbb{S}^2} (\overline{\psi}-\overline{T}^4)(\psi_\varepsilon-\overline{\psi}-T_\varepsilon^4+\overline{T}^4) d\beta dxd\tau \\
		&+ \int_0^t\int_\Omega\int_{\mathbb{S}^2} \overline{R}(\psi_\varepsilon-\overline{\psi}) d\beta dxd\tau -\int_0^t\int_\Omega\int_{\mathbb{S}^2} \overline{\psi} \cdot \overline{R} d\beta dxd\tau. 
	\end{align*}
	We substract the above equation from the difference between \eqref{eq:ene1} and \eqref{eq:ene2} and arrive at the following inequality:
	\begin{align}\label{eq:Hincal}
	 	H(T_\varepsilon&,\psi_\varepsilon | \overline{T},\overline{\psi}) \bigg|_{t} \nonumber \\
	 	\le& H(T_\varepsilon,\psi_\varepsilon | \overline{T},\overline{\psi}) \bigg|_{0}  - \frac{16}{25} \int_0^t\int_\Omega \left(\left|\nabla(T_\varepsilon)^{\frac{5}{2}}\right|^2 - \left|\nabla(\overline{T})^{\frac{5}{2}}\right|^2\right) dxd\tau \nonumber\\
		&-\int_0^t \int_\Omega 4\overline{T}^3 \Delta \overline{T} (T_\varepsilon-\overline{T}) + \Delta \overline{T}^4(T_\varepsilon-\overline{T})dxd\tau \nonumber\\
		& - \int_0^t\iint_{\Omega\times\mathbb{S}^2} (\psi_\varepsilon-\overline{\psi}) \overline{R} d\beta dxd\tau- \frac{1}{\varepsilon^2}\int_0^t\iint_{\Omega\times\mathbb{S}^2} (\psi_\varepsilon - T_\varepsilon^4 - (\overline{\psi}-\overline{T}^4))^2 d\beta dxd\tau \nonumber\\
		&+\frac{1}{\varepsilon^2} \int_0^t\iint_{\Omega\times\mathbb{S}^2}\left(T_\varepsilon^4 - \overline{T}^4 - 4 \overline{T}^3(T_\varepsilon - \overline{T})\right) \left(\overline{\psi} - \overline{T}^4\right)  d\beta dxd\tau.
	 \end{align}
	 To simplify the inequality, we rewrite the third term on the right hand side as:
	 \begin{align}
        -\int_0^t& \int_\Omega 4\overline{T}^3 \Delta \overline{T} (T_\varepsilon-\overline{T}) + \Delta \overline{T}^4(T_\varepsilon-\overline{T})dxd\tau \nonumber\\
        =& \int_0^t\int_\Omega (T_\varepsilon^4 - \overline{T}^4 - 4 \overline{T}^3(T_\varepsilon - \overline{T}))\Delta \overline{T} dxd\tau \nonumber\\
        &- \int_0^t\int_\Omega (T_\varepsilon^4 - \overline{T}^4) \Delta \overline{T}  + \Delta \overline{T}^4 (T_\varepsilon - \overline{T}) dxd\tau \nonumber\\
        =& \int_0^t\int_\Omega (T_\varepsilon^4 - \overline{T}^4 - 4 \overline{T}^3(T_\varepsilon - \overline{T}))\Delta \overline{T} dxd\tau  -\frac{32}{25} \int_0^t\int_\Omega \left|\nabla \overline{T}^{\frac{5}{2}}\right|^2 dxd\tau\nonumber\\& - \int_0^t \int_\Omega T_\varepsilon^4 \Delta \overline{T} + T_\varepsilon \overline{T}^4 dxd\tau.\label{eq:4t3d}
    \end{align}
    Here we use the fact that 
    \begin{align}\label{eq:calt52}
    	\int_\Omega \overline{T}^4 \Delta \overline{T} dx = \int_\Omega \overline{T} \Delta \overline{T}^4 dx = - \frac{16}{25} \int_\Omega \left|\nabla \overline{T}^{\frac{5}{2}}\right|^2 dx.
    \end{align}
    We calculate the last term in \eqref{eq:4t3d} as
	\begin{align*}
		-\int_{0}^t \int_\Omega (T_\varepsilon^4 \Delta \overline{T} + T_\varepsilon \Delta \overline{T}^4) dxd\tau =& -\int_0^t\int_\Omega T_\varepsilon^4 \Delta \overline{T} + 4T_\varepsilon \overline{T}^3 \Delta \overline{T} + 12 T_\varepsilon \overline{T}^2 |\nabla \overline{T}|^2 dxd\tau.
	\end{align*}
	Using
	\[\Delta \overline{T}^{\frac{5}{2}} = \nabla \cdot \left(\frac{5}{2} \overline{T}^{\frac{3}{2}}\nabla \overline{T} \right) = \frac{5}{2} \overline{T}^{\frac{3}{2}}\Delta\overline{T} + \frac{15}{4} \overline{T}^{\frac{1}{2}}|\nabla \overline{T}|^2,\]
	we obtain 
	\begin{align}
		-\int_{0}^t &\int_\Omega (T_\varepsilon^4 \Delta \overline{T} + T_\varepsilon \Delta \overline{T}^4) dxd\tau \nonumber \\
		=& -\int_0^t\int_\Omega T_\varepsilon^4 \Delta \overline{T} + 4T_\varepsilon \overline{T}^3\Delta \overline{T} + 12 T_\varepsilon \overline{T}^{\frac{3}{2}} \cdot \frac{4}{15}(\Delta \overline{T}^{\frac{5}{2}} - \frac{5}{2}\overline{T}^{\frac{3}{2}}\Delta \overline{T})dxd\tau \nonumber\\
		=& -\int_0^t \int_\Omega T_\varepsilon^4\overline{T} + 4T_\varepsilon \overline{T}^3 \Delta \overline{T} + \frac{16}{5} T_\varepsilon \overline{T}^{\frac{3}{2}} \Delta \overline{T}^{\frac{5}{2}} - 8 T_\varepsilon \overline{T}^3\Delta \overline{T} dxd\tau \nonumber\\
		=& -\int_0^t \int_\Omega T_\varepsilon^4\overline{T} - 4T_\varepsilon \overline{T}^3 \Delta \overline{T} + \frac{16}{5} T_\varepsilon \overline{T}^{\frac{3}{2}} \Delta \overline{T}^{\frac{5}{2}}dxd\tau \nonumber\\
		=& -\int_0^t\int_\Omega (T_\varepsilon^4 - \overline{T}^4 - 4\overline{T}^3(T_\varepsilon - \overline{T}))\Delta \overline{T} dxd\tau  + 3\int_0^t\int_\Omega \overline{T}^4 \Delta \overline{T} dxd\tau\nonumber\\
		& - \frac{16}{5} \int_0^t\int_\Omega T_\varepsilon \overline{T}^{\frac{3}{2}}\Delta \overline{T}^{\frac{5}{2}} dxd\tau. \label{eq:T4Tcal}
	\end{align}
	The last term in the above equation can be calculate as
	\begin{align*}
		-\frac{16}{5} \int_0^t\int_\Omega T_\varepsilon \overline{T}^{\frac{3}{2}}\Delta \overline{T}^{\frac{5}{2}} dxd\tau =&\frac{16}{5}\int_0^t\int_\Omega  \frac{2}{5}(T_\varepsilon^{\frac{5}{2}} - \overline{T}^{\frac{5}{2}} - \frac{5}{2} \overline{T}^{\frac{3}{2}}(T_\varepsilon - \overline{T}))\Delta \overline{T}^{\frac{5}{2}} dxd\tau \\
		& - \frac{32}{25} \int_0^t\int_\Omega T_\varepsilon^{\frac{5}{2}} \Delta \overline{T}^{\frac{5}{2}} dxd\tau - \frac{16}{5} \frac{3}{5} \int_0^t\int_\Omega \overline{T}^{\frac{5}{2}} \Delta \overline{T}^{\frac{5}{2}} dxd\tau.
	\end{align*}
	Taking this equation into \eqref{eq:T4Tcal} and using \eqref{eq:calt52}, we obtain 
	\begin{align*}
		-\int_{0}^t& \int_\Omega (T_\varepsilon^4 \Delta \overline{T} + T_\varepsilon \Delta \overline{T}) dxd\tau\\
		 =& -\int_0^t\int_\Omega (T_\varepsilon^4 - \overline{T}^4 - 4\overline{T}^3(T_\varepsilon - \overline{T}))\Delta \overline{T} dxd\tau \\
		& +\frac{32}{25} \int_0^t\int_\Omega (T_\varepsilon^{\frac{5}{2}} - \overline{T}^{\frac{5}{2}} - \frac{5}{2} \overline{T}^{\frac{3}{2}}(T_\varepsilon - \overline{T}))\Delta \overline{T}^{\frac{5}{2}} dxd\tau -\frac{32}{25}\int_0^t\int_\Omega T_\varepsilon^{\frac{5}{2}} \Delta \overline{T}^{\frac{5}{2}} dxd\tau .
	\end{align*}
	Taking it into \eqref{eq:4t3d} and using 
	\begin{align*}
	&-\frac{16}{25} \int_0^t \left|\nabla T_\varepsilon^{\frac{5}{2}}\right|^2-\frac{16}{25} \int_0^t \left|\nabla \overline{T}^{\frac{5}{2}}\right|^2dxd\tau + \frac{32}{25} \int_0^t \nabla T_\varepsilon^{\frac{5}{2}}\cdot\nabla \overline{T}^{\frac{5}{2}} \\
	&\quad= -\frac{16}{25}\int_0^t\int_\Omega (\nabla T_\varepsilon^{\frac{5}{2}} - \nabla \overline{T}^{\frac{5}{2}})^2 dxd\tau,
	\end{align*}
	inequality \eqref{eq:Hincal} becomes \eqref{eq:Hevol} and finished the proof.
\end{proof}

We now prove Theorem \ref{thmre}.

\begin{proof}[Proof of Theorem \ref{thmre}]
	From Lemma \ref{lm1}, we have 
		\begin{align}\label{relpf}
		H(T_\varepsilon&,\psi_\varepsilon | \overline{T},\overline{\psi}) \bigg|_{t}+ \frac{16}{25}\int_0^t\int_\Omega (\nabla T_\varepsilon^{\frac{5}{2}} - \nabla \overline{T}^{\frac{5}{2}})^2 dxd\tau \nonumber\\
		\le& H(T_\varepsilon,\psi_\varepsilon | \overline{T},\overline{\psi}) \bigg|_{0}   +\frac{32}{25} \int_0^t\int_\Omega (T_\varepsilon^{\frac{5}{2}} - \overline{T}^{\frac{5}{2}} - \frac{5}{2} \overline{T}^{\frac{3}{2}}(T_\varepsilon - \overline{T}))\Delta \overline{T}^{\frac{5}{2}} dxd\tau \nonumber\\
		&+\frac{1}{\varepsilon^2} \int_0^t\iint_{\Omega\times\mathbb{S}^2}\left(T_\varepsilon^4 - \overline{T}^4 - 4 \overline{T}^3(T_\varepsilon - \overline{T})\right) \left(\overline{\psi} - \overline{T}^4\right)  d\beta dxd\tau \nonumber\\
		& - \int_0^t\iint_{\Omega\times\mathbb{S}^2} (\psi_\varepsilon-\overline{\psi}) \overline{R} d\beta dxd\tau- \frac{1}{\varepsilon^2}\int_0^t\iint_{\Omega\times\mathbb{S}^2} (\psi_\varepsilon - T_\varepsilon^4 - (\overline{\psi}-\overline{T}^4))^2 d\beta dxd\tau \nonumber\\
		=& I_1 + I_2 + I_3 + I_4 + I_5.
	\end{align}
	To control the relative entropy, we need the following lemma.
		\begin{lemma}\label{lmtg} Let $c>0$. Suppose $A\ge c,\, A+g\ge 0$, then
	\begin{align*}
	(A+g)^5 - A^5 - 5A^4g \ge (c^3|g|^2+c|g|^4).
	\end{align*}
	\end{lemma}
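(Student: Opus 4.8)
The plan is to reduce the inequality to an elementary polynomial positivity statement. First I would substitute $t := A+g$, so that the hypothesis $A+g\ge 0$ becomes $t\ge 0$, the hypothesis $A\ge c$ is unchanged, the left-hand side becomes $P(t):=t^5-5A^4t+4A^5$ (since $P(A+g)=(A+g)^5-A^5-5A^4g$), and the right-hand side becomes $c^3(t-A)^2+c(t-A)^4$.

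Next I would note that $P(A)=0$ and $P'(A)=0$, so $(t-A)^2$ divides $P$, and the division (or a direct expansion) yields
\[
P(t)=(t-A)^2\bigl(t^3+2At^2+3A^2t+4A^3\bigr).
\]
Since the right-hand side also carries the factor $(t-A)^2=g^2$, the desired inequality is equivalent to
\[
t^3+2At^2+3A^2t+4A^3\ \ge\ c^3+c(t-A)^2,\qquad t\ge 0,\ A\ge c>0.
\]

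I would prove this by expanding the difference of the two sides,
\[
D(t):=t^3+(2A-c)t^2+(3A^2+2Ac)\,t+\bigl(4A^3-cA^2-c^3\bigr),
\]
and checking that each coefficient is nonnegative when $A\ge c>0$: one has $2A-c\ge c>0$, $3A^2+2Ac>0$, and $4A^3-cA^2-c^3\ge 3A^3-c^3\ge 2c^3>0$, using $cA^2\le A^3$ and then $A^3\ge c^3$. Hence $D(t)\ge 0$ on $t\ge 0$, so $P(t)-c^3(t-A)^2-c(t-A)^4=(t-A)^2D(t)\ge 0$, which is the claim.

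I do not expect a real obstacle here; the one point requiring care is forcing the constant term $4A^3-cA^2-c^3$ to be nonnegative, which uses the hypothesis $A\ge c$ twice. An alternative route, writing $(A+g)^5-A^5-5A^4g=\int_0^g\!\int_0^s 20(A+u)^3\,du\,ds$ and using $(A+u)^3\ge c^3+u^3$, handles $g\ge 0$ cleanly but becomes awkward when $g<0$ (where $A+u$ can be close to $0$), so I would favour the factorization argument above.
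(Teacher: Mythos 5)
Your proof is correct, and it takes a genuinely different route from the paper. You substitute $t=A+g$, observe that $P(t)=t^5-5A^4t+4A^5$ has a double root at $t=A$, factor $P(t)=(t-A)^2(t^3+2At^2+3A^2t+4A^3)$ (which checks out), and reduce the claim to the cubic inequality $t^3+2At^2+3A^2t+4A^3\ge c^3+c(t-A)^2$ for $t\ge0$, $A\ge c$, which you settle by verifying that every coefficient of the difference $D(t)$ is nonnegative — including the constant term $4A^3-cA^2-c^3\ge 3A^3-c^3\ge 2c^3$, the only place needing the hypothesis $A\ge c$. The paper instead expands directly in $g$, obtaining $10A^3g^2+10A^2g^3+5Ag^4+g^5$, uses $g\ge -A$ (i.e.\ $A+g\ge0$) to absorb $g^5\ge -Ag^4$, and then completes a square, writing $9A^3g^2+10A^2g^3+\tfrac{25}{9}Ag^4$ as a perfect square to discard the indefinite cubic term, keeping $A^3g^2+\tfrac{11}{9}Ag^4\ge c^3g^2+cg^4$. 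Your factorization makes transparent exactly how the two hypotheses enter (the constraint $A+g\ge0$ becomes the range $t\ge0$, and $A\ge c$ becomes coefficient positivity), and avoids guessing the square-completion coefficients; the paper's argument is equally short but relies on that slightly ad hoc completion of the square. Either proof is acceptable.
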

	\begin{proof}
		We can prove this lemma by direct calculations:
		\begin{align*}
		(A+g)^5&- A^5 - 5A^4g \\ 
		=& A^5 + 5 A^4g + 10 A^3 g^2+10A^2g^3+5Ag^4+g^5-A^5-5A^4g \\
		=&10 A^3g^2+10 A^2g^3+5Ag^4+g^5 \\
		\ge&  10A^3g^2+10 A^2g^3+5Ag^4-Ag^4 \\
		=&  10A^3 g^2+10 A^2g^3+4A g^4 \\
		=&A^3g^2 +\left(9A^3g^2 + 10 A^2g^3 + \frac{25}{9} Ag^4\right) + \frac{11}{9}A g^4 \\
		\ge& A^3g^2 + A^5\left(3\frac{g}{A} + \frac{5}{3}\frac{g^2}{A^2}\right)^2 + \frac{11}{9} Ag^4 \\
		\ge & c^3 g^2 + cg^4.
		\end{align*}	
	\end{proof}

	By applying Lemma \ref{lmtg} with $g:=T_\varepsilon-\overline{T}$ and $A=\overline{T}\ge c$, we have 
	\begin{align}\label{eq:Hes}
	H(T_\varepsilon,\psi_\varepsilon|\overline{T},\overline{\psi}) \ge& \int_\Omega 
	\frac{T_{\varepsilon}^5}{5} - \frac{\overline{T}^5}{5}-\overline{T}^4(T_\varepsilon-\overline{T}) dx 
	\ge C \int_\Omega (T_\varepsilon-\overline{T})^2 + (T_\varepsilon-\overline{T})^4 dx.
	\end{align} 

	Now we estimate the right hand side of inequality \eqref{relpf}. 

	We first consider $I_2$. Using the mean value theorem, we obtain
	\begin{align*}
		T_\varepsilon^{\frac{5}{2}} - \overline{T}^{\frac{5}{2}} - \frac{5}{2} \overline{T}^{\frac{3}{2}}(T_\varepsilon-\overline{T}) =& \frac{15}{4}\int_0^1\int_0^r \left(s(T_\varepsilon-\overline{T}) + \overline{T} \right)^{\frac{1}{2}} ds dr \cdot (T_\varepsilon - \overline{T})^2.\\
		\le& \frac{15}{4} \int_0^1\int_0^r ((s|T_\varepsilon-\overline{T}|)^{\frac{1}{2}} + \overline{T}^{\frac{1}{2}} )dsdr \cdot (T_\varepsilon - \overline{T})^2 \\
		\le & C |T_\varepsilon - \overline{T}|^2 + C|T_\varepsilon - \overline{T}|^4.
	\end{align*}
	Therefore, we have
	\begin{align}\label{I1}
		I_2 =& \frac{32}{25} \int_0^t\int_\Omega (T_\varepsilon^{\frac{5}{2}} - \overline{T}^{\frac{5}{2}} - \frac{5}{2} \overline{T}^{\frac{3}{2}}(T_\varepsilon - \overline{T}))\Delta \overline{T}^{\frac{5}{2}} dxd\tau\nonumber\\
		 \le& C \int_0^t \int_\Omega (T_\varepsilon - \overline{T})^2 + (T_\varepsilon - \overline{T})^4 dxd\tau.
	\end{align}
	Next we consider $I_3$.
	From the property that 
	\begin{align*}
		 T_\varepsilon^4 - \overline{T}^4-4\overline{T}^3(T_\varepsilon-\overline{T}) =& 6\overline{T}^2(T_\varepsilon-\overline{T})^2 + 4\overline{T}(T_\varepsilon-\overline{T})^3 + (T_\varepsilon-\overline{T})^4\\
	 \le& C(T_\varepsilon-\overline{T})^2 + C(T_\varepsilon-\overline{T})^4,
	\end{align*}
	we have
	\begin{align*}
		\int_0^t\int_\Omega(T_\varepsilon^4 - \overline{T}^4 - 4\overline{T}^3(T_\varepsilon-\overline{T}))\Delta \overline{T} dxd\tau \le C \int_0^t\int_\Omega (T_\varepsilon-\overline{T})^2 + (T_\varepsilon-\overline{T})^4 dxd\tau.
	\end{align*}
	So $I_3$ can be estimated as
	\begin{align}\label{I3}
	I_3=& \frac{1}{\varepsilon^2} \int_0^t\int_\Omega\int_{\mathbb{S}^2} \left(T_\varepsilon^4 - \overline{T}^4 - 4 \overline{T}^3(T_\varepsilon - \overline{T})\right) \left(\overline{\psi} - \overline{T}^4\right)  d\beta dx d\tau \nonumber\\
	=&\frac{1}{\varepsilon^2}\int_0^t\int_\Omega\int_{\mathbb{S}^2} (T_\varepsilon^4 - \overline{T}^4 - 4 \overline{T}^3(T_\varepsilon - \overline{T})	\nonumber\\
	&\qquad \cdot (\overline{T}^4 - \varepsilon \beta \cdot \nabla \overline{T}^4 -\varepsilon^2 \partial_t \overline{T}^4 + \varepsilon^2 \beta\cdot \nabla(\beta \cdot \nabla \overline{T}^4) - \overline{T}^4) d\beta dx  d\tau \nonumber\\
	=& \int_0^t\int_\Omega \left(T_\varepsilon^4 - \overline{T}^4 - 4 \overline{T}^3(T_\varepsilon - \overline{T})\right)(-4\pi \partial_t \overline{T}^4 + \frac{4}{3}\pi \Delta\overline{T}^4)  dxd\tau \nonumber\\
	\le& C(\|\partial_t \overline{T}\|_{L^\infty} + \|\Delta \overline{T}\|_{L^\infty})\int_0^t\int_\Omega\left(T_\varepsilon^4 - \overline{T}^4 - 4 \overline{T}^3(T_\varepsilon - \overline{T})\right) dxd\tau \nonumber\\
	\le& C\int_0^t \int_\Omega (6\overline{T}^2(T_\varepsilon-\overline{T})^2 + 4\overline{T}(T_\varepsilon-\overline{T})^3 + (T_\varepsilon-\overline{T})^4)dxd\tau \nonumber\\
	\le& C\int_0^t \int_\Omega (T_\varepsilon-\overline{T})^2 + (T_\varepsilon-\overline{T})^4 dxd\tau.
	\end{align}
	For $I_4$, we have
	\begin{align}\label{I2}
	I_4 =&\int_0^t\int_\Omega\int_{\mathbb{S}^2} (\psi_\varepsilon-\overline{\psi}) \overline{R} d\beta dxd\tau\nonumber \\
	\le& \int_0^t\int_\Omega\int_{\mathbb{S}^2} (\psi_\varepsilon-\overline{\psi})^2 d\beta dx d\tau +\int_0^t\int_\Omega\int_{\mathbb{S}^2}\overline{R}^2 d\beta dx d\tau\nonumber\\
	\le&\int_0^t\int_\Omega\int_{\mathbb{S}^2} (\psi_\varepsilon-\overline{\psi})^2 d\beta dxd\tau + C \varepsilon^2 .
	\end{align}
	Taking the above estimate and \eqref{I1}-\eqref{I2} into \eqref{relpf}, we get the estimate
	\begin{align*}
		\int_\Omega& (T_\varepsilon-\overline{T})^2 + (T_\varepsilon-\overline{T})^4 \bigg|_tdx + \iint_{\Omega\times\mathbb{S}^2} (\psi_\varepsilon-\overline{\psi})^2 \bigg|_t d\beta dx \\
		&\quad+ \frac{1}{\varepsilon^2} \int_0^t\iint_{\Omega\times\mathbb{S}^2} (\psi_\varepsilon - T_\varepsilon^4 - (\overline{\psi}-\overline{T}^4))^2 d\beta dxd\tau \\
		&\quad +\frac{16}{25} \int_0^t\int_\Omega \left(\nabla (T_\varepsilon)^{\frac{5}{2}} -\nabla (\overline{T})^{\frac{5}{2}} \right)^2 dxd\tau\\
		&\le C\int_0^t \int_\Omega (T_\varepsilon-\overline{T})^2 + (T_\varepsilon-\overline{T})^4dxd\tau + \int_0^t\iint_{\Omega\times\mathbb{S}^2} (\psi_\varepsilon-\overline{\psi})^2  d\beta dx d\tau +C\varepsilon^2.
	\end{align*}
	Applying Gronwall's lemma to the above inequality leads to \eqref{eq:Heps} and finishes the proof.
\end{proof}

\subsection{Dirichlet boundary conditions.}
In this case, we can do the similar calculations as before and use the boundary condition
\[T_\varepsilon =\overline{T}=T_b, \quad \text{for } x\in\partial \Omega,\]
to get the relative entropy inequality.
\begin{lemma}
	Assume  $T_\varepsilon$ is the weak solution of the system \eqref{eq:Teps}-\eqref{eq:psieps} with boundary conditions \eqref{bpsi} and  \eqref{b3},  $\overline{T}$ is a smooth solution of the equation \eqref{hgm1.0} with boundary condition $\overline{T} (t,x) =T_b $ for $x \in \partial \Omega$.  We have the following inequality:
	\begin{align}\label{eq:reformuladirichlet}
		H(T_\varepsilon&,\psi_\varepsilon | \overline{T},\overline{\psi}) \bigg|_{t}+ \frac{16}{25}\int_0^t\int_\Omega (\nabla T_\varepsilon^{\frac{5}{2}} - \nabla \overline{T}^{\frac{5}{2}})^2 dxd\tau \nonumber\\
		\le& H(T_\varepsilon,\psi_\varepsilon | \overline{T},\overline{\psi}) \bigg|_{0}   +\frac{32}{25} \int_0^t\int_\Omega (T_\varepsilon^{\frac{5}{2}} - \overline{T}^{\frac{5}{2}} - \frac{5}{2} \overline{T}^{\frac{3}{2}}(T_\varepsilon - \overline{T}))\Delta \overline{T}^{\frac{5}{2}} dxd\tau \nonumber\\
		&+\frac{1}{\varepsilon^2} \int_0^t\iint_{\Omega\times\mathbb{S}^2}\left(T_\varepsilon^4 - \overline{T}^4 - 4 \overline{T}^3(T_\varepsilon - \overline{T})\right) \left(\overline{\psi} - \overline{T}^4\right)  d\beta dxd\tau \nonumber\\
		& - \int_0^t\iint_{\Omega\times\mathbb{S}^2} (\psi_\varepsilon-\overline{\psi}) \overline{R} d\beta dxd\tau- \frac{1}{\varepsilon^2}\int_0^t\iint_{\Omega\times\mathbb{S}^2} (\psi_\varepsilon - T_\varepsilon^4 - (\overline{\psi}-\overline{T}^4))^2 d\beta dxd\tau \nonumber\\
		&-\frac{1}{2\varepsilon}\int_0^t\iint_{\Sigma_{+}} (\beta \cdot n)(\psi_\varepsilon- \overline{\psi})^2 d\sigma_xdxd\tau \nonumber\\
		&- \frac{1}{2\varepsilon} \int_0^t\iint_{\Sigma_{-}} (\beta \cdot n)(\alpha T_b^4 + (1-\alpha)\psi_\varepsilon' - \overline{\psi})^2 d\sigma_xdxd\tau.
	\end{align}
\end{lemma}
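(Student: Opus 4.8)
The plan is to repeat the argument of Lemma \ref{lm1} almost verbatim; the only genuinely new ingredient is the accounting of the boundary fluxes produced by the free-streaming operator $\tfrac1\varepsilon\beta\cdot\nabla$ and, to a lesser extent, by the Laplacian. I would proceed in the same three blocks as in the torus case.

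First I record two energy balances. Running the computation that leads to \eqref{eq:estdirichlet} without discarding any term, the weak solution $(T_\varepsilon,\psi_\varepsilon)$ satisfies, for a.e.\ $t>0$,
\[
E(T_\varepsilon,\psi_\varepsilon)\Big|_0^t+\tfrac{16}{25}\int_0^t\|\nabla T_\varepsilon^{5/2}\|_{L^2}^2\,d\tau+\tfrac1{\varepsilon^2}\int_0^t\|\psi_\varepsilon-T_\varepsilon^4\|_{L^2(\Omega\times\mathbb{S}^2)}^2\,d\tau+\tfrac1{2\varepsilon}\int_0^t\iint_\Sigma(\beta\cdot n)(\gamma^2\psi_\varepsilon)^2\,d\beta\,d\sigma_x\,d\tau=0 ,
\]
the boundary integral being finite by Lemma \ref{lm:trace} and Green's identity \eqref{eq:psibcal} (note $(\varepsilon\partial_t+\beta\cdot\nabla)\psi_\varepsilon=\tfrac1\varepsilon(\psi_\varepsilon-T_\varepsilon^4)\in L^2$ by \eqref{eq:estdirichlet}); since $\gamma^1T_\varepsilon=T_b$ is time-independent, no temperature boundary flux appears in this identity. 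Testing \eqref{eq_1}--\eqref{eq_2} against $(\overline T^4,\overline\psi)$ --- legitimate because $\overline T\in H^2(\Omega)$ has a positive lower bound, so $\overline T^4,\overline\psi$ and their first derivatives are controlled --- yields the analogous identity for $(\overline T,\overline\psi)$, now with the extra source $\int_0^t\iint_{\Omega\times\mathbb{S}^2}\overline\psi\,\overline R$ on the right and the boundary flux $\tfrac1{2\varepsilon}\int_0^t\iint_\Sigma(\beta\cdot n)\overline\psi^2$. These are the Dirichlet analogues of \eqref{eq:ene1}--\eqref{eq:ene2}.

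Second I handle the cross terms. I use the weak formulation \eqref{eq:weakd1}--\eqref{eq:weakd2} for the difference $(T_\varepsilon-\overline T,\psi_\varepsilon-\overline\psi)$, written in the form that retains the trace term $\tfrac1\varepsilon\iint_\Sigma(\beta\cdot n)\gamma^2(\psi_\varepsilon-\overline\psi)\rho$ (the form used in the existence proof of Theorem \ref{thmexistd}), with the test functions $\varphi=\theta(\tau)\overline T^4$, $\rho=\theta(\tau)\overline\psi$ from \eqref{eq:testfun}, and then let $\delta\to0$. This produces an identity for $\int_\Omega\overline T^4(T_\varepsilon-\overline T)\big|_0^t+\iint_{\Omega\times\mathbb{S}^2}\overline\psi(\psi_\varepsilon-\overline\psi)\big|_0^t$ whose interior part is algebraically the same as in Lemma \ref{lm1}. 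Three boundary contributions arise: the term $\int_{\partial\Omega}(\gamma^1T_\varepsilon-\gamma^1\overline T)\,n\cdot\nabla\overline T^4$ from integrating the Laplacian by parts, which vanishes since $\gamma^1T_\varepsilon=\gamma^1\overline T=T_b$ by \eqref{b3}; a term $\int_{\partial\Omega}T_b^4\,n\cdot\nabla(T_\varepsilon-\overline T)$, which exactly cancels the difference of the temperature boundary fluxes coming from the two energy balances; and the streaming trace term $\tfrac1\varepsilon\int_0^t\iint_\Sigma(\beta\cdot n)\gamma^2(\psi_\varepsilon-\overline\psi)\,\overline\psi$.

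Third I subtract the cross-term identity from the difference of the two energy balances. The interior part collapses exactly as in Lemma \ref{lm1}: the manipulations \eqref{eq:4t3d}--\eqref{eq:calt52} together with the square completion producing $-\tfrac{16}{25}\int_0^t\int_\Omega(\nabla T_\varepsilon^{5/2}-\nabla\overline T^{5/2})^2$ give the first five terms on the right of \eqref{eq:reformuladirichlet} (in particular the three $\tfrac1{\varepsilon^2}$-contributions recombine, after completing the square, into $-\tfrac1{\varepsilon^2}\int_0^t\iint(\psi_\varepsilon-T_\varepsilon^4-(\overline\psi-\overline T^4))^2$). The boundary terms surviving from the three balances reassemble, again by completing the square $(\gamma^2\psi_\varepsilon)^2-2\gamma^2\psi_\varepsilon\,\overline\psi+\overline\psi^2$, into $-\tfrac1{2\varepsilon}\int_0^t\iint_\Sigma(\beta\cdot n)(\gamma^2\psi_\varepsilon-\overline\psi)^2$. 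Splitting $\Sigma=\Sigma_+\cup\Sigma_-$ gives the $\Sigma_+$-term of \eqref{eq:reformuladirichlet} directly, and on $\Sigma_-$ I insert the reflection condition \eqref{bpsi} together with the well-prepared assumption \eqref{eq:wellbc}, i.e.\ $\gamma^2\psi_\varepsilon|_{\Sigma_-}=\alpha T_b^4+(1-\alpha)L\psi_\varepsilon|_{\Sigma_+}$, which turns the $\Sigma_-$-contribution into the last term of \eqref{eq:reformuladirichlet}.

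As for the difficulty: the interior computation is identical to Lemma \ref{lm1} and presents nothing new, so the entire burden is in making the boundary bookkeeping rigorous at the weak-solution level --- namely that $\gamma^2\psi_\varepsilon$ exists in $L^2(\Sigma;|n\cdot\beta|\,d\beta\,d\sigma_x\,d\tau)$ and that Green's identity \eqref{eq:psibcal} may be applied to $\psi_\varepsilon-\overline\psi$ (both follow from Lemma \ref{lm:trace} and the energy estimate), and that the energy balance in the first block holds as an equality with the correct sign, which one obtains by passing to the limit in the Galerkin energy identity from the proof of Theorem \ref{thmexistd} rather than only in its final inequality. The one other point of care is checking that $\overline T^4$ and $\overline\psi$ are admissible test functions, which is exactly guaranteed by the hypotheses $\overline T\in H^2(\Omega)$ and $\overline T\ge c>0$ of Theorem \ref{thmre}.
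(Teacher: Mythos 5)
Your proposal is correct and takes essentially the same route as the paper's proof: two energy balances with all boundary terms retained (for the weak solution and for $(\overline T,\overline\psi)$ with the residual $\overline R$), cross terms from the weak formulation tested with $\theta(\tau)\overline T^4$ and $\theta(\tau)\overline\psi$, the interior algebra of Lemma \ref{lm1} reused verbatim, temperature boundary terms handled via $\gamma^1T_\varepsilon=\overline T=T_b$, and the kinetic boundary terms completed to $-\tfrac{1}{2\varepsilon}\int_0^t\iint_\Sigma(\beta\cdot n)(\gamma^2\psi_\varepsilon-\overline\psi)^2\,d\beta\,d\sigma_x\,d\tau$ before inserting the reflection condition on $\Sigma_-$. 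One slip to fix: the parenthetical claim in your first display that ``no temperature boundary flux appears'' is false --- integrating $T_\varepsilon^4\Delta T_\varepsilon$ by parts with inhomogeneous Dirichlet data produces $\int_0^t\int_{\partial\Omega}T_b^4\,n\cdot\nabla T_\varepsilon\,d\sigma_x\,d\tau$, which the paper keeps in \eqref{eq:ene1b}, and your own third block presupposes this flux is present so that it can cancel against $\int_0^t\int_{\partial\Omega}T_b^4\,n\cdot\nabla(T_\varepsilon-\overline T)\,d\sigma_x\,d\tau$; restore it and the bookkeeping closes exactly as in the paper. Note also that passing to the limit in the Galerkin energy identity only gives the first balance as an inequality, not an equality, but that is the direction actually needed, exactly as in \eqref{eq:ene1b}.
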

	Here, the above inequality does not include boundary terms of $T_\varepsilon$ since $T_\varepsilon-\overline{T}$ vanishes on the boundary. 
\begin{proof}
	We can slightly modify the proof of Theorem \ref{thmexistd} to show that the following energy inequality holds
		\begin{align}\label{eq:ene1b}
	H(T_\varepsilon&,\psi_\varepsilon) \bigg|_{0}^t + \frac{16}{25}\int_0^t \int_{\Omega} \left|\nabla (T_\varepsilon)^{\frac{5}{2}}\right|^2dxd\tau + \frac{1}{\varepsilon^2}\int_0^t\int_{\Omega} \int_{\mathbb{S}^{2}} (\psi_\varepsilon-(T_\varepsilon)^4  )^2 d \beta dxd\tau\nonumber\\
	&+ \int_0^t\int_{\partial \Omega} T_b^4 n\cdot \nabla T_\varepsilon d\sigma_x d\tau+ \frac{1}{2\varepsilon} \int_0^t\iint_{\Sigma_{+}}  (\beta \cdot n) \psi_\varepsilon^2 d\sigma_xdx d\tau\nonumber\\
	&+ \frac{1}{2\varepsilon} \int_0^t\iint_{\Sigma_{-}} (\beta \cdot n) (\alpha T_b^4 + (1-\alpha) \psi_\varepsilon')^2d\sigma_xdxd\tau\le 0.
	\end{align}

		Similarily the equations \eqref{eq_1} and \eqref{eq_2} also  satisfy
	\begin{align}\label{eq:ene2b}
	H(\overline{T}&,\overline{\psi}) \bigg|_{0}^t + \frac{16}{25}\int_0^t \int_{\Omega} \left|\nabla (\overline{T})^{\frac{5}{2}}\right|^2dxd\tau + \frac{1}{\varepsilon^2}\int_0^t\int_{\Omega} \int_{\mathbb{S}^{2}} (\overline{\psi}-\overline{T}^4  )^2 d \beta dxd\tau \nonumber\\
	&- \int_0^t \int_{\partial \Omega} T_b^4 n \cdot \nabla \overline{T} d\sigma_x d\tau + \frac{1}{2\varepsilon}\int_0^t \iint_{\Sigma} (\beta \cdot n) \overline{\psi}^2 d\sigma_xdx d\tau\nonumber\\
	=& \int_0^t\iint_{\Omega\times\mathbb{S}^2} \overline{\psi}\cdot \overline{R} d\beta dxd\tau.
	\end{align}

	We recall that from the definition of weak solutions \eqref{eq:weakd1}-\eqref{eq:weakd2}, the difference $T_\varepsilon-\overline{T},\psi_\varepsilon-\overline{\psi}$ satisfy
		\begin{align}
		-&\int_0^\infty\int_\Omega \varphi_t(T_\varepsilon-\overline{T})dxdt - \int_\Omega \varphi (T_\varepsilon-\overline{T}) \bigg|_{t=0}dx \nonumber\\
		&= \int_0^\infty \int_\Omega \Delta \varphi (T_\varepsilon - \overline{T}) dxdt + \int_0^\infty \int_{\partial \Omega} n\cdot \nabla T_\varepsilon \varphi d\sigma_x dt \nonumber\\
		&\quad- \int_0^\infty \int_{\partial \Omega} \varphi n \cdot \nabla \overline{T} d\sigma_x dt - \int_0^\infty \int_{\partial \Omega} (T_\varepsilon - \overline{T}) n \cdot \nabla \varphi dxdt \nonumber\\
		&\quad + \frac{1}{\varepsilon^2} \int_0^\infty \iint_{\Omega\times\mathbb{S}^2}(\psi_\varepsilon - \overline{\psi} - T_\varepsilon^4 + \overline{T}^4) \varphi d\beta dxdt, \label{eq:we1b} \\
		-&\int_0^\infty\iint_{\Omega\times\mathbb{S}^2} \rho_t (\psi_\varepsilon - \overline{\psi})d\beta dxdt - \int_\Omega\int_{\mathbb{S}^2} \rho(\psi_\varepsilon- \overline{\psi}) \bigg|_{t=0} d\beta dx\nonumber \\
		&\quad- \frac{1}{\varepsilon} \int_0^\infty\iint_{\Omega\times\mathbb{S}^2}(\psi_\varepsilon-\overline{\psi}) \beta \cdot \nabla \rho d\beta dxdt + \frac{1}{\varepsilon}\int_0^\infty\iint_{\Sigma_{+}} (\beta \cdot n) \psi_\varepsilon \rho d\sigma_xdxdt \nonumber \\
		&\quad+ \frac{1}{\varepsilon}\int_0^t\iint_{\Sigma_{-}} (\beta \cdot n) (\alpha T_b^4 + (1-\alpha) \psi_\varepsilon) \rho d\sigma_xdxdt - \frac{1}{\varepsilon} \int_0^t\iint_{\Sigma} (\beta \cdot n) \overline{\psi} \rho d\sigma_xdxdt \nonumber\\
		&= -\frac{1}{\varepsilon^2} \int_0^\infty\iint_{\Omega\times\mathbb{S}^2} (\psi_\varepsilon - \overline{\psi} - T_\varepsilon^4 + \overline{T}^4)\rho d\beta dxdt - \int_0^\infty\iint_{\Omega\times\mathbb{S}^2} \rho \overline{R} d\beta dxdt.\label{eq:we2b}
	\end{align}
	We choose the test function same as \eqref{eq:testfun} and let $\delta \to 0$. We will get
	\begin{align*}
		\int_{\Omega}& \overline{T}^4 (T_\varepsilon-\overline{T}) \bigg|_{\tau=0}^t dx  \\
		=& \int_0^t \int_\Omega 4\overline{T}^3 \Delta \overline{T} (T_\varepsilon-\overline{T}) dxd\tau 
		+ \int_0^t\int_\Omega \Delta \overline{T}^4(T_\varepsilon-\overline{T})dxd\tau \\
		& +\frac{1}{\varepsilon^2} \int_0^t\iint_{\Omega\times\mathbb{S}^2}4\overline{T}^3(\overline{\psi}-\overline{T}^4)(T_\varepsilon-\overline{T}) d\beta dxd\tau \\
		&+ \frac{1}{\varepsilon^2} \int_0^t \int_\Omega\int_{\mathbb{S}^2} \overline{T}^4(\psi_\varepsilon-\overline{\psi}-T_\varepsilon^4+\overline{T}^4) d\beta dxd\tau \\
		&+ \int_0^t \int_{\partial \Omega} n\cdot \nabla T_\varepsilon\overline{T}^4 d\sigma_x d\tau - \int_0^t \int_{\partial \Omega} T_b^4 n \cdot \nabla \overline{T} d\sigma_x d\tau \nonumber\\
		&- \int_0^t \int_{\partial \Omega} (T_\varepsilon - \overline{T}) n \cdot \nabla \overline{T}^4 dxd\tau , \\
		\iint_{\Omega\times\mathbb{S}^2} &\overline{\psi} (\psi_\varepsilon-\overline{\psi}) \bigg|_{\tau=0}^t d\beta dx \\
		= & 
		 - \frac{1}{\varepsilon}\int_0^t\iint_{\Sigma_{+}} (\beta \cdot n) \psi_\varepsilon \overline{\psi} d\sigma_xdxd\tau \nonumber\\
		 &- \frac{1}{\varepsilon}\int_0^t\iint_{\Sigma_{-}} (\beta \cdot n) (\alpha T_b^4 + (1-\alpha) \psi_\varepsilon) \overline{\psi} d\sigma_xdxd\tau \\
		  &+ \frac{1}{\varepsilon} \int_0^t\iint_{\Sigma} (\beta \cdot n) \overline{\psi} \cdot\overline{\psi} d\sigma_xdxd\tau -\frac{1}{\varepsilon^2} \int_0^t\int_\Omega\int_{\mathbb{S}^2}(\overline{\psi}-\overline{T}^4)(\psi_\varepsilon-\overline{\psi})d\beta dxd\tau \\&+ \int_0^t \int_\Omega\int_{\mathbb{S}^2} \overline{R} (\psi_\varepsilon-\overline{\psi}) d\beta dxd\tau - \frac{1}{\varepsilon^2}\int_0^t\int_\Omega\int_{\mathbb{S}^2} \overline{\psi}(\psi_\varepsilon-\overline{\psi}-T_\varepsilon^4+\overline{T}^4) d\beta dxd\tau \\&- \int_0^t \int_\Omega\int_{\mathbb{S}^2} \overline{\psi} \cdot \overline{R} d\beta dxd\tau.
	\end{align*}
	We substract the summation of the above two equations from the difference of equations \eqref{eq:ene1b} and \eqref{eq:ene2b} to get
	\begin{align}\label{eq:Hincalb}
	 	H(T_\varepsilon&,\psi_\varepsilon | \overline{T},\overline{\psi}) \bigg|_{t}\\
	 	 \le& H(T_\varepsilon,\psi_\varepsilon | \overline{T},\overline{\psi}) \bigg|_{0}  - \frac{16}{25} \int_0^t\int_\Omega \left(\left|\nabla(T_\varepsilon)^{\frac{5}{2}}\right|^2 - \left|\nabla(\overline{T})^{\frac{5}{2}}\right|^2\right) dxd\tau \nonumber\\
		&-\int_0^t \int_\Omega 4\overline{T}^3 \Delta \overline{T} (T_\varepsilon-\overline{T}) + \Delta \overline{T}^4(T_\varepsilon-\overline{T})dxd\tau \nonumber\\
		& - \int_0^t\iint_{\Omega\times\mathbb{S}^2} (\psi_\varepsilon-\overline{\psi}) \overline{R} d\beta dxd\tau- \frac{1}{\varepsilon^2}\int_0^t\iint_{\Omega\times\mathbb{S}^2} (\psi_\varepsilon - T_\varepsilon^4 - (\overline{\psi}-\overline{T}^4))^2 d\beta dxd\tau \nonumber\\
		&+\frac{1}{\varepsilon^2} \int_0^t\iint_{\Omega\times\mathbb{S}^2}\left(T_\varepsilon^4 - \overline{T}^4 - 4 \overline{T}^3(T_\varepsilon - \overline{T})\right) \left(\overline{\psi} - \overline{T}^4\right)  d\beta dxd\tau \nonumber \\
		&  + \int_0^t \int_{\partial \Omega} (T_\varepsilon - \overline{T}) n\cdot \nabla \overline{T}^4 d\sigma_xd\tau \nonumber\\
		&-\frac{1}{2\varepsilon}\int_0^t\iint_{\Sigma_{+}} (\beta \cdot n)(\psi_\varepsilon- \overline{\psi})^2 d\sigma_xdxd\tau \nonumber\\&- \frac{1}{2\varepsilon} \int_0^t\iint_{\Sigma_{-}}(\beta \cdot n) (\alpha T_b^4 + (1-\alpha)\psi_\varepsilon' - \overline{\psi})^2 d\sigma_xdxd\tau.
	 \end{align}
By considering the boundary conditions, the equation \eqref{eq:4t3d} becomes 
	 \begin{align*}
        -\int_0^t& \int_\Omega 4\overline{T}^3 \Delta \overline{T} (T_\varepsilon-\overline{T}) + \Delta \overline{T}^4(T_\varepsilon-\overline{T})dxd\tau \nonumber\\
        =& \int_0^t\int_\Omega (T_\varepsilon^4 - \overline{T}^4 - 4 \overline{T}^3(T_\varepsilon - \overline{T}))\Delta \overline{T} dxd\tau  +  \int_0^t\int_\Omega (\overline{T}^4 \Delta \overline{T} + \overline{T} \Delta \overline{T}^4)dxd\tau \\
        &- \int_0^t \int_\Omega( T_\varepsilon^4 \Delta \overline{T} + T_\varepsilon \Delta \overline{T}^4) dxd\tau.
    \end{align*}
    The last term is
	\begin{align*}
		-\int_{0}^t& \int_\Omega (T_\varepsilon^4 \Delta \overline{T} + T_\varepsilon \Delta \overline{T}^4) dxd\tau \\
		=& -\int_0^t\int_\Omega (T_\varepsilon^4 - \overline{T}^4 - 4\overline{T}^3(T_\varepsilon - \overline{T}))\Delta \overline{T} dxd\tau \\
		& +\frac{32}{25} \int_0^t\int_\Omega (T_\varepsilon^{\frac{5}{2}} - \overline{T}^{\frac{5}{2}} - \frac{5}{2} \overline{T}^{\frac{3}{2}}(T_\varepsilon - \overline{T}))\Delta \overline{T}^{\frac{5}{2}} dxd\tau -\frac{32}{25}\int_0^t\int_\Omega T_\varepsilon^{\frac{5}{2}} \Delta \overline{T}^{\frac{5}{2}} dxd\tau \\
		& + 3\int_0^t\int_\Omega \overline{T}^4\Delta \overline{T} dxd\tau - \frac{48}{25}\int_0^t\int_\Omega \overline{T}^{\frac{5}{2}} \Delta \overline{T}^{\frac{5}{2}} dxd\tau.
	\end{align*}
	Adding the above two equations and using the integration-by-parts formulas
	\begin{align*}
		\int_\Omega \overline{T}^4 \Delta \overline{T} dx 
		=& - \frac{16}{25}\int_\Omega \left|\nabla \overline{T}^{\frac{5}{2}}\right|^2 dx + \int_{\partial \Omega} \overline{T}^4 n \cdot \nabla \overline{T} d\sigma_x, \\
		\int_\Omega \overline{T} \Delta \overline{T}^4 =& - \frac{16}{25}\int_\Omega \left|\nabla \overline{T}^{\frac{5}{2}}\right|^2 dx + \int_{\partial \Omega} \overline{T} n \cdot \nabla \overline{T}^4 d\sigma_x, \\
		\int_{\Omega} \overline{T}^{\frac{5}{2}}\Delta \overline{T}^{\frac{5}{2}} dx =& -\int_\Omega \left|\nabla \overline{T}^{\frac{5}{2}}\right|^2 dx + \frac{5}{2} \int_{\partial \Omega} \overline{T}^4 n \cdot \nabla \overline{T} dx, \\
		\int_{\Omega} {T}_\varepsilon^{\frac{5}{2}}\Delta \overline{T}^{\frac{5}{2}} dx =& -\int_\Omega \nabla {T}_\varepsilon^{\frac{5}{2}} \cdot \nabla \overline{T}^{\frac{5}{2}} dx + \frac{5}{2} \int_{\partial \Omega} T_\varepsilon^{\frac{5}{2}} \overline{T}^{\frac{3}{2}} n \cdot \nabla \overline{T} dx,
	\end{align*}
	we obtain
	\begin{align}\label{eq:4t3DeltaT}
		-\int_0^t& \int_\Omega 4\overline{T}^3 \Delta \overline{T} (T_\varepsilon-\overline{T}) + \Delta \overline{T}^4(T_\varepsilon-\overline{T})dxd\tau \nonumber\\
		=& \frac{32}{25} \int_0^t\int_\Omega (T_\varepsilon^{\frac{5}{2}} - \overline{T}^{\frac{5}{2}} - \frac{5}{2} \overline{T}^{\frac{3}{2}}(T_\varepsilon - \overline{T}))\Delta \overline{T}^{\frac{5}{2}} dxd\tau \\
		&- \frac{32}{25} \int_0^t\int_{\Omega} \left|\nabla \overline{T}^{\frac{5}{2}}\right|^2 dxd\tau + \frac{32}{25} \int_0^t\int_{\Omega} \nabla {T}_\varepsilon^{\frac{5}{2}} \cdot \nabla \overline{T}^{\frac{5}{2}} dxd\tau \nonumber\\
		&+\frac{16}{5}\int_0^t\int_{\partial \Omega} \overline{T}^4 n\cdot \nabla \overline{T} d\sigma_xd\tau - \frac{16}{5} \int_0^t\int_{\partial \Omega} T_\varepsilon^{\frac{5}{2}} \overline{T}^{\frac{3}{2}} n\cdot \nabla \overline{T} d\sigma_x d\tau \nonumber\\
		=& \frac{32}{25} \int_0^t\int_\Omega (T_\varepsilon^{\frac{5}{2}} - \overline{T}^{\frac{5}{2}} - \frac{5}{2} \overline{T}^{\frac{3}{2}}(T_\varepsilon - \overline{T}))\Delta \overline{T}^{\frac{5}{2}} dxd\tau - \frac{32}{25} \int_0^t\int_{\Omega} \left|\nabla \overline{T}^{\frac{5}{2}}\right|^2 dxd\tau \nonumber\\
		&+ \frac{32}{25} \int_0^t\int_{\Omega} \nabla {T}_\varepsilon^{\frac{5}{2}} \cdot \nabla \overline{T}^{\frac{5}{2}} dxd\tau \nonumber\\
		& -\frac{16}{5}\int_0^t\int_{\partial \Omega} \overline{T}^{\frac{3}{2}}(T_\varepsilon^{\frac{5}{2}} - \overline{T}^{\frac{5}{2}} - \frac{5}{2} \overline{T}^{\frac{3}{2}}(T_\varepsilon - \overline{T})) n\cdot \nabla \overline{T} d\sigma_xd\tau \nonumber \\&- 2 \int_0^t\int_{\partial \Omega} (T_\varepsilon - \overline{T}) n\cdot \nabla \overline{T}^4d\sigma_xd\tau\nonumber\\
		=&  \frac{32}{25} \int_0^t\int_\Omega (T_\varepsilon^{\frac{5}{2}} - \overline{T}^{\frac{5}{2}} - \frac{5}{2} \overline{T}^{\frac{3}{2}}(T_\varepsilon - \overline{T}))\Delta \overline{T}^{\frac{5}{2}} dxd\tau - \frac{32}{25} \int_0^t\int_{\Omega} \left|\nabla \overline{T}^{\frac{5}{2}}\right|^2 dxd\tau \nonumber\\
		&+ \frac{32}{25} \int_0^t\int_{\Omega} \nabla {T}_\varepsilon^{\frac{5}{2}} \cdot \nabla \overline{T}^{\frac{5}{2}} dxd\tau.
	\end{align}
	Taking the above equation into \eqref{eq:Hincal} will lead to the inequality  and finishes the proof.
\end{proof}

We now proceed to prove Theorem \ref{thmre}.
\begin{proof}[Proof of Theorem \ref{thmre}.]
	Notice that the relative entropy formula \eqref{eq:reformuladirichlet} only differs from \eqref{eq:Hevol} of the torus case by the last two boundary terms on the right hand side of \eqref{eq:reformuladirichlet}. To control these two terms, we recall
\[\overline{\psi}|_{\partial \Omega} =T_b^4-\varepsilon\beta \cdot \nabla \overline{T}^4-\varepsilon^2\partial_t\overline{T}^4 + \varepsilon^2 \beta \cdot \nabla(\beta \cdot \nabla \overline{T}^4)=T_b^4+\varepsilon\overline{R}_b,\]
with $\overline{R}_b = -\beta \cdot \nabla \overline{T}^4-\varepsilon\partial_t\overline{T}^4 + \varepsilon \beta \cdot \nabla(\beta \cdot \nabla \overline{T}^4)$ bounded. So we have
\begin{align*}
	-\frac{1}{2\varepsilon}&\int_0^t\iint_{\Sigma_{+}} (\beta \cdot n)(\psi_\varepsilon- \overline{\psi})^2 d\sigma_xdxd\tau \\
	=& -\frac{1}{2\varepsilon}\int_0^t\iint_{\Sigma_{+}} (\beta \cdot n)(\psi_\varepsilon- T_b^4 - \varepsilon \overline{R}_b)^2 d\sigma_xdxd\tau \\
	=& - \frac{1}{2\varepsilon} \int_0^t\iint_{\Sigma_{+}} (\beta \cdot n)(\psi_\varepsilon - T_b^4)^2 d\sigma_xdxd\tau +\int_0^t\iint_{\Sigma_{+}} (\beta \cdot n) (\psi_\varepsilon - T_b^4)\overline{R}_b d\sigma_xdxd\tau \\
	&- \frac{\varepsilon}{2}  \int_0^t\iint_{\Sigma_{+}} \overline{R}_b^2 d\sigma_xdxd\tau \\
\end{align*}
From the coordinate transform $\beta' = \beta - 2n(n\cdot \beta)$, we get $n \cdot \beta = -n\cdot \beta'$. We can also get $\beta = \beta' + 2n(n \cdot \beta')$ and $\overline{R}_b$ can be also expressed using $\beta'$. We denote $\overline{R}_b'[\beta'] = \overline{R}_b[\beta]$. We have
\begin{align*}
	- \frac{1}{2\varepsilon}& \int_0^t\iint_{\Sigma_{-}}(\beta \cdot n) (\alpha T_b^4 + (1-\alpha)\psi_\varepsilon' - \overline{\psi})^2 d\sigma_xdxd\tau \\
	=& - \frac{1}{2\varepsilon} \int_0^t\iint_{\Sigma_{-}}(\beta \cdot n) (\alpha T_b^4 + (1-\alpha)\psi_\varepsilon' - T_b^4 - \varepsilon \overline{R}_b)^2 d\sigma_xdxd\tau\\
	=&  \frac{1}{2\varepsilon} \int_0^t\iint_{\Sigma_{+}}(\beta' \cdot n) ( (1-\alpha)(\psi_\varepsilon' - T_b^4) - \varepsilon \overline{R}_b')^2 dS_{\beta',x}d\tau \\
	=& \frac{(1-\alpha)^2}{2\varepsilon}\int_0^t\iint_{\Sigma_{+}} (\beta \cdot n) (\psi_\varepsilon - T_b^4)^2 d\sigma_xdx d\tau \\
	&-  \int_0^t\iint_{\Sigma_{+}}(\beta\cdot n)(\psi_\varepsilon - T_b^4)  \overline{R}_b' d\sigma_xdx d\tau+\frac{\varepsilon}{2} \int_0^t\iint_{\Sigma_{+}} (\overline{R}_b')^2 d\sigma_xdxd\tau.
\end{align*}
Adding the above two equations together, we have
\begin{align}\label{eq:I3b}
    -\frac{1}{2\varepsilon}&\int_0^t\iint_{\Sigma_{+}} (\beta \cdot n)(\psi_\varepsilon- \overline{\psi})^2 d\sigma_xdxd\tau \nonumber\\&\quad- \frac{1}{2\varepsilon} \int_0^t\iint_{\Sigma_{-}}(\beta \cdot n) (\alpha T_b^4 + (1-\alpha)\psi_\varepsilon' - \overline{\psi})^2 d\sigma_xdxd\tau \nonumber\\
    \le& -\frac{(2\alpha - \alpha^2)}{2\varepsilon} \int_0^t\iint_{\Sigma_{+}} (\beta \cdot n) (\psi_\varepsilon - T_b^4)^2 d\sigma_xdx d\tau \nonumber\\
    &+ \int_0^t\iint_{\Sigma_{+}} (\beta \cdot n) (\psi_\varepsilon - T_b^4)(\overline{R}_b - \overline{R}_b') d\sigma_xdx d\tau + C\varepsilon \nonumber\\
    \le& -\frac{(2\alpha - \alpha^2)}{2\varepsilon} \int_0^t\iint_{\Sigma_{+}} (\beta \cdot n) (\psi_\varepsilon - T_b^4)^2 d\sigma_xdx d\tau \nonumber\\
    &+ \frac{(2\alpha-\alpha^2)}{4\varepsilon} \int_0^t\iint_{\Sigma_{+}} (\beta \cdot n) (\psi_\varepsilon - T_b^4)^2 d\sigma_xdx d\tau \nonumber\\
    &+ \frac{C\varepsilon}{2\alpha-\alpha^2} \int_0^t\iint_{\Sigma_{+}} (\overline{R}_b-\overline{R}_b')^2 d\sigma_xdxd\tau + C\varepsilon \nonumber\\
    \le& -\frac{(2\alpha - \alpha^2)}{4\varepsilon} \int_0^t\iint_{\Sigma_{+}} (\beta \cdot n) (\psi_\varepsilon - T_b^4)^2 d\sigma_xdx d\tau + C\varepsilon.
\end{align}
Combining the above estimates with the estimates of other terms in proof of the torus case and applying Gronwall's inequality lead to \eqref{eq:Heps} with $s=1$ and finishes  the proof.
\end{proof}

\subsection{Robin boundary condition with $r>0$}
For the case of Robin boundary condition \eqref{b2}, a similar relative entropy inequality can also be derived like for the Dirichlet case. The result is
	\begin{align}\label{eq:Hevolb}
		H(T_\varepsilon&,\psi_\varepsilon | \overline{T},\overline{\psi}) \bigg|_{t}+ \frac{16}{25}\int_0^t\int_\Omega (\nabla T_\varepsilon^{\frac{5}{2}} - \nabla \overline{T}^{\frac{5}{2}})^2 dxd\tau \nonumber\\
		\le& H(T_\varepsilon,\psi_\varepsilon | \overline{T},\overline{\psi}) \bigg|_{0}   +\frac{32}{25} \int_0^t\int_\Omega (T_\varepsilon^{\frac{5}{2}} - \overline{T}^{\frac{5}{2}} - \frac{5}{2} \overline{T}^{\frac{3}{2}}(T_\varepsilon - \overline{T}))\Delta \overline{T}^{\frac{5}{2}} dxd\tau \nonumber\\
		&+\frac{1}{\varepsilon^2} \int_0^t\iint_{\Omega\times\mathbb{S}^2}\left(T_\varepsilon^4 - \overline{T}^4 - 4 \overline{T}^3(T_\varepsilon - \overline{T})\right) \left(\overline{\psi} - \overline{T}^4\right)  d\beta dxd\tau \nonumber\\
		& - \int_0^t\iint_{\Omega\times\mathbb{S}^2} (\psi_\varepsilon-\overline{\psi}) \overline{R} d\beta dxd\tau- \frac{1}{\varepsilon^2}\int_0^t\iint_{\Omega\times\mathbb{S}^2} (\psi_\varepsilon - T_\varepsilon^4 - (\overline{\psi}-\overline{T}^4))^2 d\beta dxd\tau \nonumber\\
		& + \int_0^t\int_{\partial \Omega} (T_\varepsilon^4 - \overline{T}^4) \frac{T_b - T_\varepsilon}{ \varepsilon^r} d\sigma_xd\tau - \int_0^t \int_{\partial \Omega} (T_\varepsilon - \overline{T}) n\cdot \nabla \overline{T}^4 d\sigma_xd\tau \nonumber\\
		&-\frac{16}{5}\int_0^t\int_{\partial \Omega} \overline{T}^{\frac{3}{2}}(T_\varepsilon^{\frac{5}{2}} - \overline{T}^{\frac{5}{2}} - \frac{5}{2} \overline{T}^{\frac{3}{2}}(T_\varepsilon - \overline{T})) n\cdot \nabla \overline{T} d\sigma_xd\tau \nonumber \\
		&-\frac{1}{2\varepsilon}\int_0^t\iint_{\Sigma_{+}} (\beta \cdot n)(\psi_\varepsilon- \overline{\psi})^2 d\sigma_xdxd\tau \nonumber\\
		&- \frac{1}{2\varepsilon} \int_0^t\iint_{\Sigma_{-}} (\beta \cdot n)(\alpha T_b^4 + (1-\alpha)\psi_\varepsilon' - \overline{\psi})^2 d\sigma_xdxd\tau .
	\end{align}
With the above relative entropy inequality we now proceed to prove Theorem \ref{thmre}.

\begin{proof}[Proof of Theorem \ref{thmre}]
Compare the relative entropy inequality \eqref{eq:Hevolb} with \eqref{eq:reformuladirichlet} of the Dirichlet case, there are two additional terms with the boundary of $T_\varepsilon$ that need to control. First, by \eqref{EstiL5} we have
\begin{align}\label{eq:I1b}
	\int_0^t&\int_{\partial \Omega} (T_\varepsilon^4 - \overline{T}^4) \frac{T_b - T_\varepsilon}{ \varepsilon^r} d\sigma_x d\tau \nonumber\\
	 =& - \int_0^t\int_{\partial \Omega} (T_\varepsilon^4 - T_b^4) \frac{T_b - T_\varepsilon}{ \varepsilon^r} d\sigma_x d\tau \nonumber\\
	=& -\frac{1}{\varepsilon^r}\int_0^t\int_{\partial \Omega} (T_\varepsilon - T_b)^2 (T_\varepsilon + T_b)(T_\varepsilon^2 + T_b^2) d\sigma_x d\tau \nonumber\\
	\le& -\frac{1}{ \varepsilon^r} \int_0^t\int_{\partial \Omega} |T_\varepsilon - T_b|^5 d\sigma_x d\tau,
\end{align}
The second bounary term is
\begin{align}\label{eq:I2b}
	\int_0^t& \int_{\partial \Omega} (T_\varepsilon - \overline{T}) n\cdot \nabla \overline{T}^4 d\sigma_xd\tau \nonumber\\=& \int_0^t \int_{\partial \Omega} (T_\varepsilon - T_b) n\cdot \nabla \overline{T}^4 d\sigma_xd\tau \nonumber\\
	\le& \frac{1}{2 \varepsilon^r}  \int_0^t\int_{\partial \Omega} |T_\varepsilon - T_b|^5 d\sigma_x d\tau + C\varepsilon^r \int_0^t\int_{\partial \Omega} |n\cdot \nabla \overline{T}^4|^2 d\sigma_xd\tau \nonumber \\
	\le& \frac{1}{2 \varepsilon^r}  \int_0^t\int_{\partial \Omega} |T_\varepsilon - T_b|^5 d\sigma_x d\tau + C\varepsilon^r .
\end{align}
Combining \eqref{eq:I1b}, \eqref{eq:I2b}, and result of the Dirichlet case, we can conclude that the following inequality holds:
	\begin{align*}
		\int_\Omega& (T_\varepsilon-\overline{T})^2 + (T_\varepsilon-\overline{T})^4 \bigg|_tdx + \iint_{\Omega\times\mathbb{S}^2} (\psi_\varepsilon-\overline{\psi})^2 \bigg|_t d\beta dx \\
		&\quad + \frac{1}{\varepsilon^2} \int_0^t\iint_{\Omega\times\mathbb{S}^2} (\psi_\varepsilon - T_\varepsilon^4 - (\overline{\psi}-\overline{T}^4))^2 d\beta dxd\tau \\&\quad + \int_0^t\int_\Omega \left(\nabla (T_\varepsilon)^{\frac{5}{2}} -\nabla (\overline{T})^{\frac{5}{2}} \right)^2 dxd\tau + \frac{1}{2\varepsilon^r} \int_0^t\int_{\partial \Omega} |T_\varepsilon -T_b|^5 d\sigma_x d\tau \\
		&\quad+ \frac{(2\alpha - \alpha^2)}{\varepsilon} \int_0^t\iint_{\Sigma_{+}} (\beta \cdot n) (\psi_\varepsilon - T_b^4)^2 d\sigma_xdx d\tau\\
		&\le C\int_0^t \int_\Omega (T_\varepsilon-\overline{T})^2 + (T_\varepsilon-\overline{T})^4dxd\tau + \int_0^t\iint_{\Omega\times\mathbb{S}^2} (\psi_\varepsilon-\overline{\psi})^2  d\beta dx d\tau +C\varepsilon + C\varepsilon^r.
	\end{align*}
	Applying Gronwall's inequality leads to \eqref{eq:Heps} and finishes the proof.	
\end{proof}

For the case of Robin boundary condition with $r=0$, a boundary layer exists for $T_\varepsilon$, thus we can not apply the above relative entropy method directly to show the convergence of $T_\varepsilon$, although with the compactness method this is done in Theorem \ref{LimitProof}. This boundary layer problem will be investigated in our future paper.

\section*{Acknowledgements} The work of M.G and N. M is supported by Tamkeen under the NYU Abu Dhabi Research Institute grant of the center SITE.

\bibliographystyle{siam}
\bibliography{ReferenceHGM}

\appendix

\section{Positivity of the solution of system \eqref{eq:Teps}-\eqref{eq:psieps}}\label{Postivsolution} 

{\color{black}
This section is devoted to the proof of Lemma \ref{lm:uniformbd} such that the non-negativity of the solution for \eqref{eq:Teps}-\eqref{eq:psieps}, associated to the radiative boundary condition \eqref{bpsi} and for a general boundary conditions
	 \begin{align}
	 	a\varepsilon^r n \cdot \nabla T_\varepsilon(t,x) = -bT_\varepsilon(t,x) + bT_b(t,x), \text{ for any } x\in \partial \Omega. \label{b2g}
	 \end{align}
where $a$ and $b$ are real numbers satisfying the condition of $|a|+|b|>0$.  We add the reals number a and b to \eqref{b2} in order to cover different boundary conditions. 

The non-negativity of the solutions to the approximate Galerkin approximate systems that constructed in section 2 can be proved in a same way.

\begin{proof}[Proof of Lemma \ref{lm:uniformbd}]
	For a nonnegative initial and boundary radiative data $\psi_{\varepsilon0}\ge 0,\psi_{b}\ge 0$, and due to $T_\varepsilon^4\ge 0$, following from the maximum principle for linear transport equations, the  solution $\psi_{\varepsilon}$ of the radiative transfer radiative \eqref{eq:psieps} is nonnegative. 

	Next we show $T_\varepsilon\ge 0$. We define $F$ on $\mathbb{R}_{+}\times\Omega \times \mathbb{R}$ by 
	\begin{equation}
		F(t,x,y)=\frac{1}{\varepsilon^2}\langle \psi_\varepsilon(t,x,\beta) - y^4(t,x)\rangle.
		\end{equation}
		The system \eqref{eq:Teps}, \eqref{b2g} can be rewritten as
		\begin{align}
			&\partial_t T_\varepsilon = \Delta T_\varepsilon +F(t,x,T_\varepsilon ) , \text{ for any } t>0\,\,\, x\in \Omega
			 , \label{eq:TepsS} \\
		&a\varepsilon^r n \cdot \nabla T_\varepsilon(t,x) = -bT_\varepsilon(t,x) + bT_b(t,x), \text{ for any } x\in \partial \Omega, \label{b2gx}\\
		&	T_\varepsilon(t=0,x) = T_{\varepsilon 0}(x), ~~ \text{ for any } x\in\Omega. \label{eq:ic1c}
		\end{align}
		Now, we define $\bar{F}$ in $\mathbb{R}_{+}\times\Omega \times \mathbb{R}$ by
		\begin{align}\label{eq:FbarAp}
		\bar{F}(t, x,y)=\left \{\begin{array}{lll}
		\frac{1}{\varepsilon^2}\langle \psi_\varepsilon(t,x,\beta) - y^4(t,x)\rangle,&{\rm if}& y\geq 0,\\
		\frac{1}{\varepsilon^2}\langle \psi_\varepsilon(t,x,\beta)\rangle, &{\rm if}& y<0.
		\end{array}
		\right.
		\end{align}
		Let us consider  $\bar{T}_\varepsilon$ the solution of the following system
		\begin{equation}\label{eq2S}
		\begin{aligned}
		&\partial_t \bar{T}_\varepsilon = ~ \Delta \bar{T}_\varepsilon +F(t,x,\bar{T}_\varepsilon ) , \text{ for any } t>0\,\,\, x\in \Omega, \\
		&a\varepsilon^r n \cdot \nabla \bar{T}_\varepsilon(t,x) = -b\bar{T}_\varepsilon(t,x) + bT_b(t,x), \text{ for any } x\in \partial \Omega,\\
		&\bar{T}_\varepsilon(t=0,x) = ~ \bar{T}_{\varepsilon 0}(x), ~~ \text{ for any } x\in\Omega.
		\end{aligned}
		\end{equation}
		 The objective is to show  that the solution $\bar{T}_\varepsilon$ of this equation remains nonnegative over the time. Indeed, in this case  $\bar{F}$ and $F$ coincide, therefore we have by the uniqueness of the solution $T_\varepsilon = \bar{T}_\varepsilon $ which is  nonnegative.
		
		 We set $\bar{T}_\varepsilon^{+}=\max(T_\varepsilon,0)$ and $\bar{T}_\varepsilon^{-}=\max(-T_\varepsilon,0)$, such that $\bar{T}_\varepsilon=\bar{T}_\varepsilon^{+}-\bar{T}_\varepsilon^{-}$. Multiplying  the equation \eqref{eq2S} by $(-\bar{T}_{\varepsilon}^{-})$ and integrating over $\Omega$,  we obtain
		\begin{equation*}
		-\int_{\Omega}\partial_{t}\bar{T}_\varepsilon(t, x)\bar{T}_\varepsilon^{-}(t, x)dx+\int_{\Omega}\Delta \bar{T}_\varepsilon(t, x)\bar{T}_\varepsilon^{-}(t, x)dx= -\int_{\Omega}\bar{F}(t, x,\bar{T}_\varepsilon) \bar{T}_\varepsilon^{-}(t, x)dx.
		\end{equation*}
		Now, we have
		\begin{equation}\label{derivetemp}
		-\int_{\Omega}\partial_{t}\bar{T}_\varepsilon(t, x)\bar{T}_\varepsilon^{-}(t, x)dx= \frac{1}{2}\partial_{t}\int_{\Omega}(\bar{T}_\varepsilon^{-}(t, x))^{2}dx,
		\end{equation}
		\begin{equation}\label{F}
		\begin{aligned}
		-\int_{\Omega}\bar{F}(t, x,\bar{T}_\varepsilon) \bar{T}_\varepsilon^{-}(t, x)dx&=-\int_{\{\bar{T}_\varepsilon<0\}}\bar{F}(t, x,\bar{T}_\varepsilon) \bar{T}_\varepsilon^{-}(t, x)dx\\&=-\int_{\{\bar{T}_\varepsilon<0\}}\frac{1}{\varepsilon^2}\langle \psi_\varepsilon(t,x,\beta)\rangle \bar{T}_\varepsilon^{-}(t, x)dx\leq 0,
		\end{aligned}
		\end{equation}
		and 
		\begin{equation*}
		\begin{aligned}
		\int_{\Omega}\Delta \bar{T}_\varepsilon(t, x)\bar{T}_\varepsilon^{-}(t, x)dx=&\int_{\Omega}(\nabla \bar{T}_\varepsilon^{-}(t, x))^{2}dx+\int_{\partial\Omega}n \cdot \nabla\bar{T}_\varepsilon(t, x)\bar{T}_\varepsilon^{-}(t, x)d\sigma_{x}.
		\end{aligned}
		\end{equation*}
		If $a\neq0$ (Robin or Neumann boundary conditions), then
		\begin{equation}
		\begin{aligned}
		\int_{\partial\Omega}n \cdot \nabla\bar{T}_\varepsilon(t, x)\bar{T}_\varepsilon^{-}(t, x)d\sigma_{x}=&-\frac{b}{a\varepsilon^r}\int_{\partial\Omega}\bar{T}_\varepsilon(t, x)\bar{T}_\varepsilon^{-}(t, x)d\sigma_{x} \\&+\frac{b}{a\varepsilon^r}\int_{\partial\Omega}T_{b}(t, x)\bar{T}_\varepsilon^{-}(t, x)d\sigma_{x}\\=&\frac{b}{a\varepsilon^r}\int_{\partial\Omega}\left(\bar{T}_\varepsilon^{-}(t, x)\right)^{2}d\sigma_{x}\\&+\frac{b}{a\varepsilon^r}\int_{\partial\Omega}T_{b}(t, x)\bar{T}_\varepsilon^{-}(t, x)d\sigma_{x} \geqslant 0.
		\end{aligned}
		\label{Robnew}
		\end{equation}
		Now, if we have $a\neq0$ (thus $b>0$), since  $\bar{T}_\varepsilon^{-}=0$ on $\partial \Omega$ then
		\begin{equation}
		\begin{aligned}
		\int_{\partial\Omega}n \cdot \nabla\bar{T}_\varepsilon(t, x)\bar{T}_\varepsilon^{-}(t, x)d\sigma_{x}=0.
		\end{aligned}
		\label{Dirch}
		\end{equation}
		In the both cases, we have 
		\begin{equation}
		\int_{\Omega}\Delta \bar{T}_\varepsilon(t, x)\bar{T}_\varepsilon^{-}(t, x)dx\geqslant 0.
		\label{Lap}
		\end{equation}
		Consequently, \eqref{derivetemp}, \eqref{F} and \eqref{Lap} imply
		\begin{equation}
		\frac{1}{2}\partial_{t}\int_{\Omega}(\bar{T}_\varepsilon^{-}(t, x))^{2}dx\leq 0.
		\label{eqpos}
		\end{equation}
		As $T_{\varepsilon 0}$ is nonnegative, we deduce from \eqref{eqpos} that $\bar{T}_\varepsilon^{-}\equiv 0$. It follows that $\bar{T}_\varepsilon$ and consequently $T_\varepsilon$ are nonnegative in $\mathbb{R}_{+}\times\Omega$.

		Finally, we show if $T_{\varepsilon0}, T_b\le \gamma$ and $\psi_{\varepsilon0},\psi_b \le \gamma^4$, then the solution $(T_\varepsilon,\psi_\varepsilon)$ to system \eqref{eq:Teps}-\eqref{eq:psieps} satisfy $T_\varepsilon\le \gamma,\psi_\varepsilon\le \gamma$ in $\Omega$ for any $t>0.$
		Let $\gamma-T_\varepsilon  = g,  \gamma^4 - \psi_\varepsilon = \phi,$ then $(g,\phi)$ satisfies 
		\begin{align}
			&\partial_t g =\Delta g + \frac{1}{\varepsilon^2} \langle \phi -\gamma^4 + (\gamma-g)^4\rangle, \label{eq:A14}\\
			&\partial_t \phi + \frac{1}{\varepsilon}\beta\cdot \nabla \phi = -\frac{1}{\varepsilon^2}(\phi - \gamma^4 + (g+\gamma)^4),\label{eq:A15}
		\end{align}
		subject to the initial condition $g(t=0)=\gamma-T_{\varepsilon0}\ge 0,\psi(t=0)=\gamma^4 - \psi_{\varepsilon0} \ge 0$ and boundary condition 
		\begin{align*}
			a \varepsilon^r n\cdot \nabla g =- b(g-(T_b-\gamma)).
		\end{align*}
		We can take define $\bar{G}$ \eqref{eq:FbarAp} to be 
		\[
		\bar{G}=\left \{\begin{array}{lll}
		\frac{1}{\varepsilon^2}( \phi - (\gamma^4 - (\gamma-y)^4)),&{\rm if}& y\geq 0,\\
		\frac{1}{\varepsilon^2} \phi,&{\rm if}& y<0.
		\end{array}
		\right.,
		\]
		and due to $\gamma - (\gamma-y)^4\ge 0$ for $y\ge 0$, the same argument as before leads to the non-negativity of the solutions to 
		$$\partial_t g = \Delta g + \langle G(\bar{g})\rangle$$
		$$\partial_t \phi + \frac{1}{\varepsilon}\beta\cdot \nabla \phi = -\bar{G}(g).$$
		Due to $g\ge 0, \bar{G}(g) = (\phi-(\gamma^4-(\gamma-g)^4))$ and so the solution to \eqref{eq:A14}-\eqref{eq:A15} are non-negative $g=\gamma-T_\varepsilon\ge 0, \phi = \gamma^4 - \psi_\varepsilon \ge 0$ and hence $T_\varepsilon\le \gamma,\psi_\varepsilon\le \gamma^4$ holds.
\end{proof}}

\section{Lemmas used in the Compactness Method}\label{CompactnessLemma} 
We recall now the Compactness method  to prove the weak convergence.
\begin{lemma} \cite[Lemma 5.1]{lions1996mathematical}\label{Lemmalions1996mathematical}
Let $g^n$,$h^n$ converge weakly to $g$, $h$ respectively in $L^{p_1} (0, \tau; L^{p_{2}}(\Omega))$, $L^{q_1}(0,\tau; L^{q_2} (\Omega))$ where $1\le p_1, p_2 \le +\infty$,
$$
\frac{1}{p_1}+\frac{1}{q_1}=\frac{1}{p_2}+\frac{1}{q_2}=1
$$
We assume in addition that
\[\frac{\partial g^{n}}{\partial t} \text{ is bounded in } L^{1}(0, \tau; W^{-m,1}(\Omega)) \text{ for some }m\geqslant 0 \text{ independent of } n, \]
	\[\left\| h^{n}(.,t)-h^{n}(.+\xi,t)\right\|_{L^{q_1}(0,\tau; L^{q_2} (\Omega))} \to 0, \text{ as } |\xi | \to 0 \text{ uniformly in } n. \]
Then $g^{n}h^{n}$ converges to $gh$ in the sense of distributions on $\Omega \times (0,\tau)$.
\end{lemma}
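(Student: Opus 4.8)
The plan is to carry out the classical compensated-compactness argument for products of weakly convergent sequences, in which the time regularity of $g^n$ and the spatial equicontinuity of $h^n$ play complementary roles. Fix a test function $\varphi\in C_c^\infty(\Omega\times(0,\tau))$; it suffices to prove that $\iint \varphi g^n h^n\,dx\,dt\to\iint\varphi g h\,dx\,dt$. Since $\operatorname{supp}\varphi$ is compactly contained in $\Omega\times(0,\tau)$, I would first localize to a subdomain $\Omega'\Subset\Omega$ with $\operatorname{supp}\varphi\Subset\Omega'\times(0,\tau)$, extend $g^n$ and $h^n$ by zero, and introduce a standard family of spatial mollifiers $\{\rho_\delta\}_{\delta>0}$, taking $\delta$ small enough that the convolutions below only involve values of $g^n,h^n$ near $\operatorname{supp}\varphi$.

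The first step is the mollification error. Split $h^n=h^n*\rho_\delta+(h^n-h^n*\rho_\delta)$; by Minkowski's integral inequality $\|h^n-h^n*\rho_\delta\|_{L^{q_1}(0,\tau;L^{q_2})}\le\sup_{|\xi|\le\delta}\|h^n(\cdot,t)-h^n(\cdot+\xi,t)\|_{L^{q_1}(0,\tau;L^{q_2})}=:\omega(\delta)$, and $\omega(\delta)\to0$ as $\delta\to0$ uniformly in $n$ by the second hypothesis. Since the pairs of exponents are conjugate in both variables and $\{g^n\}$ is bounded in $L^{p_1}(0,\tau;L^{p_2})$ (being weakly convergent), Hölder's inequality gives $|\iint\varphi g^n(h^n-h^n*\rho_\delta)|\le\|\varphi\|_\infty\,\|g^n\|_{L^{p_1}(0,\tau;L^{p_2})}\,\omega(\delta)=O(\omega(\delta))$ uniformly in $n$.

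The heart of the argument — and the step I expect to be the main obstacle — is to pass to the limit, for fixed $\delta$, in $\iint\varphi g^n(h^n*\rho_\delta)$. The difficulty is that the time-derivative bound is available only for $g^n$, not for $h^n$, so $h^n*\rho_\delta$ cannot be directly compactified in time. The remedy is to move the convolution onto $g^n$ by Fubini: $\int_\Omega\varphi g^n(h^n*\rho_\delta)\,dx=\int_\Omega h^n\,\big((\varphi g^n)*\check\rho_\delta\big)\,dx$, where $\check\rho_\delta(y)=\rho_\delta(-y)$. Now $(\varphi g^n)*\check\rho_\delta$ is bounded in $L^{p_1}(0,\tau;W^{1,p_2}(\Omega'))$, because convolution with the fixed kernel gains spatial derivatives; and, for this fixed $\delta$, its time derivative $(\varphi_t g^n)*\check\rho_\delta+(\varphi\,\partial_t g^n)*\check\rho_\delta$ is bounded in $L^1(0,\tau;L^{p_2}(\Omega'))$ — the first term trivially, and the second because multiplication by $\varphi$ preserves $W^{-m,1}$ while convolution with $\check\rho_\delta$ maps $W^{-m,1}(\Omega)$ continuously into $C^m(\overline{\Omega'})\subset L^{p_2}(\Omega')$. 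By an Aubin–Lions–Simon compactness argument (using $W^{1,p_2}(\Omega')\hookrightarrow\hookrightarrow L^{p_2}(\Omega')\hookrightarrow W^{-1,1}(\Omega')$), $(\varphi g^n)*\check\rho_\delta$ is relatively compact in $L^{p_1}(0,\tau;L^{p_2}(\Omega'))$; since it also converges weakly there to $(\varphi g)*\check\rho_\delta$, the whole sequence converges to it \emph{strongly}. Pairing this strong convergence against the weak convergence $h^n\rightharpoonup h$ in the conjugate space $L^{q_1}(0,\tau;L^{q_2})$ yields $\iint\varphi g^n(h^n*\rho_\delta)\to\iint\varphi g(h*\rho_\delta)$ as $n\to\infty$.

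Finally I would combine the estimates: for every $\delta>0$,
\begin{align*}
\limsup_{n}\Big|\iint\varphi g^n h^n-\iint\varphi g h\Big|\le C\,\omega(\delta)+\Big|\iint\varphi g\,(h*\rho_\delta-h)\Big|,
\end{align*}
since the middle contribution vanishes in the limit $n\to\infty$ by the previous step. The first term on the right tends to $0$ as $\delta\to0$ by Step 1, and the second does too because $h*\rho_\delta\to h$ in $L^{q_1}(0,\tau;L^{q_2})$ (continuity of translation) while $\varphi g\in L^{p_1}(0,\tau;L^{p_2})$ is a fixed element of the dual space. Letting $\delta\to0$ gives $\iint\varphi g^n h^n\to\iint\varphi g h$, which is the assertion. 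Beyond the asymmetry already mentioned, the only point requiring care is the low regularity of $\partial_t g^n$ (merely $L^1$ in time with values in a negative-order Sobolev space); this is exactly what the spatial mollification rescues, at the price of constants depending on $\delta$, which is harmless because $\delta\to0$ is taken only after $n\to\infty$.
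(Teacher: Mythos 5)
The paper does not actually prove this lemma; it is quoted verbatim from Lions \cite[Lemma 5.1]{lions1996mathematical} and used as a black box, so there is no in-paper proof to compare against. Your argument is, in substance, the classical proof of that lemma: split $h^n=h^n*\rho_\delta+(h^n-h^n*\rho_\delta)$, kill the second piece uniformly in $n$ by the translation hypothesis, and for fixed $\delta$ trade the mollifier onto the $g$-side so that the $W^{-m,1}$ bound on $\partial_t g^n$ plus the spatial smoothing of the kernel yields strong compactness in time-space, which can then be paired against the weak convergence of $h^n$; finally let $\delta\to0$. The structure and all the main estimates (Minkowski for the mollification error, H\"older with the doubly conjugate exponents, the Fubini identity $\int\varphi g^n(h^n*\rho_\delta)=\int h^n\,(\varphi g^n)*\check\rho_\delta$, the $\delta$-dependent bounds on $(\varphi g^n)*\check\rho_\delta$ and its time derivative, and the identification of the strong limit with the weak one) are correct, and the order of limits ($n\to\infty$ first, then $\delta\to0$) is handled properly.

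Two endpoint points deserve a sentence of care, since the lemma allows $p_1,p_2\in[1,\infty]$. First, when $p_1=\infty$ (so $q_1=1$), the Aubin--Lions--Simon statement you invoke gives relative compactness of $(\varphi g^n)*\check\rho_\delta$ only in $L^r(0,\tau;L^{p_2}(\Omega'))$ for finite $r$, not in $L^\infty$ in time, whereas pairing against $h^n$ weak in $L^1(0,\tau;L^{q_2})$ formally asks for uniform-in-time strong convergence; this is repaired by the $W^{1,1}(0,\tau;L^{p_2}(\Omega'))$ bound (Helly-type selection giving convergence for a.e.\ $t$, then dominated convergence using the uniform bound), so it is a technicality rather than a gap, but it should be acknowledged. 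Second, in the last step you justify $h*\rho_\delta\to h$ in $L^{q_1}(0,\tau;L^{q_2})$ by ``continuity of translation,'' which fails in general when $q_2=\infty$ (i.e.\ $p_2=1$); the correct justification is that the limit $h$ inherits the uniform modulus of the $h^n$ by weak lower semicontinuity of the norm, so $\|h*\rho_\delta-h\|\le\omega(\delta)$ directly from the hypothesis. With these two remarks your proof is complete and faithful to the standard (Lions) argument.
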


Let us recall the averaging lemma see \cite{golse1988regularity,diperna1991lp,masmoudi2007diffusion}.
\begin{lemma}\label{AveragingLemma}
	Let $\tau>0$. Assume that $\psi_\varepsilon$ is bounded in $L^{2}((0,\tau)\times \Omega\times\mathbb{S}^{2})$, that $g_\varepsilon$ is bounded in $L^{2}((0,\tau)\times \Omega\times\mathbb{S}^{2})$, and that 
	\[\varepsilon\partial_t \psi_\varepsilon + \beta\cdot \nabla_x \psi_\varepsilon = g_\varepsilon.\]
	Then, for all $\rho \in C_0^\infty(\mathbb{S}^{2})$,
	\begin{equation}\label{EqaveragingLemma}
	\left\|\int_{\mathbb{S}^{2}} \left(\psi_\varepsilon(t,x+y,\beta)-h(t,x,v)\right)\rho(\beta)d\beta\right\|_{L^2_{t,x}} \to 0, \text{ as } y \to 0 \text{ uniformly in } \varepsilon,
	\end{equation}
	where $\psi_\varepsilon$ has been prolonged by $0$ for $x\notin \Omega$.
\end{lemma}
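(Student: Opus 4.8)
The plan is to establish the (stronger, and classical) fact that the velocity average $m_\varepsilon(t,x):=\int_{\mathbb{S}^2}\psi_\varepsilon(t,x,\beta)\rho(\beta)\,d\beta$ is relatively compact in $L^2_{t,x}$, uniformly in $\varepsilon$ — equivalently, enjoys a uniform fractional smoothing estimate — from which \eqref{EqaveragingLemma} is immediate. I would run the standard velocity‑averaging argument of \cite{golse1988regularity,diperna1991lp} via the Fourier transform in $(t,x)$. After extending $\psi_\varepsilon$ by $0$ in $x$, one works on $\mathbb{R}_t\times\mathbb{R}^3_x$; in the application of the lemma (on the torus) no boundary contribution appears, while in a bounded domain the extension produces an extra surface‑measure source $-(\beta\cdot n)\,\gamma^2\psi_\varepsilon\,d\sigma_{\partial\Omega}$, which is controlled by the trace estimates from the energy inequalities of Section~\ref{section2} (Theorems~\ref{thmexistd}--\ref{thmexistr}) and can be absorbed into the right‑hand side. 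Taking the Fourier transform, with dual variables $(\tau',k)$, the transport equation becomes $i(\varepsilon\tau'+\beta\cdot k)\,\widehat{\psi_\varepsilon}=\widehat{g_\varepsilon}$; the factor $\varepsilon\tau'$ is a harmless real shift and every estimate below will be uniform in it, which is precisely why the averaging survives the diffusive scaling.

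The core step is a resolvent splitting: for a parameter $\lambda>0$,
\[
\widehat{\psi_\varepsilon}=\frac{\widehat{g_\varepsilon}}{\lambda+i(\varepsilon\tau'+\beta\cdot k)}+\lambda\,\frac{\widehat{\psi_\varepsilon}}{\lambda+i(\varepsilon\tau'+\beta\cdot k)},
\]
so that $\widehat{m_\varepsilon}(\tau',k)=B(\tau',k)+A(\tau',k)$, the $\rho(\beta)\,d\beta$‑integrals of the two terms. By Cauchy--Schwarz in $\beta$ and the identity $\int_{\mathbb{S}^2}F(\beta\cdot k)\,d\beta=2\pi\int_{-1}^{1}F(|k|s)\,ds$ one gets the co‑area bound
\[
\int_{\mathbb{S}^2}\frac{|\rho(\beta)|^2\,d\beta}{\lambda^2+(\varepsilon\tau'+\beta\cdot k)^2}\le \frac{C\,\|\rho\|_{L^\infty}^2}{\lambda\,|k|}\qquad(|k|\ge 1),
\]
uniformly in $\varepsilon,\tau'$, whence $|B|^2\le C(\lambda|k|)^{-1}\|\widehat{g_\varepsilon}(\tau',k,\cdot)\|_{L^2_\beta}^2$ and $|A|^2\le C\lambda|k|^{-1}\|\widehat{\psi_\varepsilon}(\tau',k,\cdot)\|_{L^2_\beta}^2$. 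Optimizing in $\lambda$ (take $\lambda=\|\widehat{g_\varepsilon}\|_{L^2_\beta}/\|\widehat{\psi_\varepsilon}\|_{L^2_\beta}$, truncated to $(0,1]$) yields the bilinear gain
\[
|\widehat{m_\varepsilon}(\tau',k)|^2\le \frac{C}{|k|}\,\|\widehat{\psi_\varepsilon}(\tau',k,\cdot)\|_{L^2_\beta}\,\|\widehat{g_\varepsilon}(\tau',k,\cdot)\|_{L^2_\beta}\qquad(|k|\ge 1),
\]
while for $|k|\le 1$ one only uses the trivial bound $\|m_\varepsilon\|_{L^2_{t,x}}\le\|\rho\|_{L^2_\beta}\|\psi_\varepsilon\|_{L^2_{t,x,\beta}}\le C$.

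To conclude, by Plancherel $\|m_\varepsilon(t,x+y)-m_\varepsilon(t,x)\|_{L^2_{t,x}}^2=\int|e^{iy\cdot k}-1|^2\,|\widehat{m_\varepsilon}(\tau',k)|^2\,d\tau'dk$; I would split the integral at $|k|=R$. On $|k|\le R$, use $|e^{iy\cdot k}-1|\le |y|R$ and $\|m_\varepsilon\|_{L^2}\le C$ to bound this part by $C|y|^2R^2$; on $|k|>R$, use $|e^{iy\cdot k}-1|\le 2$, the gain above, and Cauchy--Schwarz in $(\tau',k)$ together with Plancherel, giving $\int_{|k|>R}|k|^{-1}\|\widehat{\psi_\varepsilon}\|_{L^2_\beta}\|\widehat{g_\varepsilon}\|_{L^2_\beta}\le R^{-1}\|\psi_\varepsilon\|_{L^2}\|g_\varepsilon\|_{L^2}\le C/R$, hence a contribution $\le C/R$. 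Thus $\|m_\varepsilon(\cdot+y)-m_\varepsilon\|_{L^2_{t,x}}^2\le C|y|^2R^2+C/R$ uniformly in $\varepsilon$: given $\eta>0$, first pick $R$ with $C/R<\eta/2$, then $|y|$ small so that $C|y|^2R^2<\eta/2$. I expect the main obstacle to be the passage from the bare $L^2$ bounds on $(\psi_\varepsilon,g_\varepsilon)$ to an $\varepsilon$‑uniform smoothing of the average — this is exactly what the resolvent splitting plus the $\mathbb{S}^2$ co‑area estimate achieve — together with the only genuinely extra point relative to the torus case, namely bounding the surface‑measure source created by the extension by zero, which requires the trace bounds furnished by the energy estimates.
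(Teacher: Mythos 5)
Your interior mechanism is sound and is in fact a more self-contained route than the paper's: the splitting $\widehat{\psi_\varepsilon}=(\lambda+i(\varepsilon\tau'+\beta\cdot k))^{-1}(\widehat{g_\varepsilon}+\lambda\widehat{\psi_\varepsilon})$, the co-area bound $\int_{\mathbb{S}^2}|\rho(\beta)|^2\,(\lambda^2+(\varepsilon\tau'+\beta\cdot k)^2)^{-1}d\beta\le C\|\rho\|_{L^\infty}^2(\lambda|k|)^{-1}$ uniform in the shift $\varepsilon\tau'$, and the Plancherel splitting at $|k|=R$ do give the translation estimate uniformly in $\varepsilon$, whereas the paper writes the average as $T_\lambda(g_\varepsilon+\lambda\psi_\varepsilon)$ with the damped integral along characteristics and imports the $L^2_tH_x^{\pm 1/2}$ bounds of Jabin--Vega together with real interpolation to land in a Besov space of order $1/4$. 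The genuine gap is in your reduction to $\mathbb{R}_t\times\mathbb{R}^3_x$. Extending $\psi_\varepsilon$ by zero outside $\Omega$ (and outside $(0,\tau)$ in time, which you do not discuss but must do to take the Fourier transform in $t$) makes the transport equation hold only up to singular sources: a surface measure $-(\beta\cdot n)\,\gamma^2\psi_\varepsilon\,d\sigma_x$ concentrated on $\partial\Omega$, and Dirac masses $\varepsilon\bigl(\psi_\varepsilon|_{t=0}\,\delta_{t=0}-\psi_\varepsilon|_{t=\tau}\,\delta_{t=\tau}\bigr)$ in time. These are measures, not $L^2$ functions, so they cannot be ``absorbed into the right-hand side'': your key step $|B|^2\le C(\lambda|k|)^{-1}\|\widehat{g_\varepsilon}(\tau',k,\cdot)\|_{L^2_\beta}^2$, followed by Cauchy--Schwarz and Plancherel in $(\tau',k)$, needs $\widehat{g_\varepsilon}$ to be square-integrable in $(\tau',k)$, which fails for a source supported on a hypersurface (its Fourier transform does not decay), and the available trace bound is only in $L^2(\Sigma;|n\cdot\beta|\,d\beta\,d\sigma_x\,dt)$, with a weight that degenerates at grazing directions. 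Averaging lemmas for measure-valued right-hand sides exist but yield weaker gains and would require a separate argument.

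The repair is the one the paper uses and is compatible with your computation: do not extend and absorb, but first localize with a cutoff $\chi(t,x)=\chi_1(t)\chi_2(x)$ supported in $(\delta,\tau-\delta)\times\{x\in\Omega:\operatorname{dist}(x,\partial\Omega)>\delta\}$, so that $\tilde\psi_\varepsilon=\chi\psi_\varepsilon$ solves
\begin{align*}
\varepsilon\partial_t\tilde\psi_\varepsilon+\beta\cdot\nabla_x\tilde\psi_\varepsilon=\chi g_\varepsilon+\bigl((\varepsilon\partial_t+\beta\cdot\nabla_x)\chi\bigr)\psi_\varepsilon
\end{align*}
on all of $\mathbb{R}_t\times\mathbb{R}^3_x$ with an $L^2$ right-hand side (uniformly in $\varepsilon$, since $|(\varepsilon\partial_t+\beta\cdot\nabla_x)\chi|\le C_\delta$); your Fourier argument then applies verbatim to $\tilde\psi_\varepsilon$, and one concludes by controlling the difference between the averages of $\psi_\varepsilon$ and $\tilde\psi_\varepsilon$ in the limit $\delta\to0$, as in the paper's proof. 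On the torus with, say, compactly supported-in-time test regions the boundary issue in $x$ disappears, but the time cutoff (or an equivalent treatment of the temporal boundary terms, which carry the factor $\varepsilon$ but are still measures for fixed $\varepsilon$) is still needed before invoking the space--time Fourier transform.
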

We consider only an average on the sphere and for the $L^{2}$ regularity of the solution. However, Lemma \ref{AveragingLemma} can be obtained  from the proof of \cite[Theorem 3 and Theorem 6]{diperna1991lp}. In our proof we follow\cite[Theorem 3 ]{diperna1991lp} with $q=p=2$, $m=1$ and $\tau=0$ and we prove that $
\int_{\mathbb{R}^{3}}\psi_\varepsilon(t,x,\beta)\rho(\beta)d\beta \in L^{r,\infty}(0,\tau;B^{s,r}_{\infty,\infty})$ where $r=2$ and $s=\frac{1}{4}$ which implies the compactness result given in \eqref{EqaveragingLemma}. 

\begin{proof}
We start by rewriting the problem in the whole domain. Let us introduce the following cut-off functions $\chi_{1}$ and $\chi_{2}$ such that $\chi_{1}(t)=1$ on $\left(\delta, \tau-\delta\right)$ for $\delta$ small enough, and $\chi_{2}(x)=1$ on $\{ x\in \Omega,\,\,|\,\, dis\left(x,\partial \Omega\right)>\delta \}$. Denoting by $\chi(t,x)=\chi_{1}(t)\chi_{1}(x)$ and $\tilde \psi_\varepsilon(t,x)=\psi_\varepsilon(t,x)\chi(t,x)$, then 
\begin{equation}\label{a1Lemma}
\varepsilon\partial_{t}{\tilde \psi_\varepsilon}+\beta\cdot\nabla_{x}{\tilde \psi_\varepsilon}=\chi g_\varepsilon+\left(\varepsilon\partial_{t}+\beta\cdot\nabla_{x}\right)\chi(t,x)\psi_\varepsilon(t,x,\beta).
\end{equation}
From the uniform bound of $\psi_\varepsilon$ and $g_\varepsilon$ in $L^{2}$ space 
	\[
	\left\|\int_{\mathbb{S}^{2}} \left(\psi_\varepsilon(t,x,\beta)-{\tilde \psi_\varepsilon}(t,x,\beta)\right)\rho(\beta)d\beta\right\|_{L^2\left(\left(0,\tau\right)\times\Omega\right)} \to 0,
	 \]
	 when  $\delta \to 0 $ uniformly in  $\varepsilon$.
	 
Now, we have $g_\varepsilon$ in $L^{2}$ it is enough to prove the lemma for $\mathbb{R}_{t}\times \mathbb{R}^{3}_{x}$ instead $(0,\tau)\times\Omega$. As the proof of \cite[Lemma 4.2]{masmoudi2007diffusion} is enough to prove 
\[
\int_{\mathbb{R}^{3}}\psi_\varepsilon(t,x,\beta)\rho(\beta)d\beta \in L^{r,\infty}(0,\tau;B^{s,r}_{\infty,\infty}),
\]
 where $r=2$ and $s=\frac{1}{4}$ for more details about the definition of Besov space built on the Lorentz space $L^{r,\infty}$ see \cite{jabin2004real}. As we said in the beginning we follow the proof of \cite[Lemma 4.2]{masmoudi2007diffusion}. So, we add  $\lambda \psi_\varepsilon$ in both side of equation \eqref{a1Lemma}, we obtain
	 
	 	\[
		\lambda \psi_\varepsilon+\varepsilon\partial_t \psi_\varepsilon + \beta\cdot \nabla_x \psi_\varepsilon = g_\varepsilon+\lambda \psi_\varepsilon.
		\]
Then 
\[
\int_{\mathbb{S}^{2}}\psi_\varepsilon(t,x,\beta) \rho(\beta)d\beta=T_{\lambda}\left(g_\varepsilon + \lambda \psi_\varepsilon\right),
\]
where
\[
T_{\lambda}(g)=\int_{0}^{\infty}\int_{\mathbb{S}^{2}}g(t-s,x-s\beta,\beta)e^{\lambda s}\rho(\beta)d\beta ds.
\]
Consequently, from \cite[Proposition 3.1]{jabin2004real} it follows
\[
\|T_{\lambda}(g)\|_{L^{2}_{t}H_{x}^{1/2}}\le \lambda^{-1/2} \|g\|_{L^{2}(\mathbb{R}\times\mathbb{R}^{3}\times\mathbb{R}^{3})},
\]
and 	 
\[
\|T_{\lambda}(g)\|_{\lambda^{-3/2}L^{2}_{t}H_{x}^{-1/2}+\lambda^{-1/2}L^{2}_{t}H_{x}^{1/2}}\le C \|g\|_{L^{2}(L^{2}(\mathbb{R}\times\mathbb{R}^{3};H_{x}^{-1}(\mathbb{R}^{3}))}
\]	 
Moreover, we have 
\[
\int_{\mathbb{S}^{2}}\psi_\varepsilon(t,x,\beta) \rho(\beta)d\beta=\eta=\eta^{1}+\eta^{2},
\]
where
\[
\|\eta^{1}\|_{L^{2}_{t}H_{x}^{1/2}}\le C \lambda^{1/2}\|\psi_\varepsilon\|_{L^{2}},
\]
and	 
\[
\|\eta^{2}\|_{\lambda^{-3/2}L^{2}_{t}H_{x}^{-1/2}+\lambda^{-1/2}L^{2}_{t}H_{x}^{1/2}}\le C\|g_{\varepsilon}\|_{L^{2}}.
\]
Then by rewriting $\eta^{2}=\eta^{2}_{1}+\eta^{2}_{2}$ as
\[
\|\eta^{2}_{1}\|_{L^{2}_{t}H_{x}^{-1/2}}\le C\lambda^{-3/2}\|g_{\varepsilon}\|_{L^{2}}
\]

\[
\|\eta^{2}_{2}\|_{L^{2}_{t}H_{x}^{1/2}}\le C\lambda^{-1/2}\|g_{\varepsilon}\|_{L^{2}}
\]
Then we deduce that $\eta \in \left(L^{2}_{t}H_{x}^{1/2},L^{2}_{t}H_{x}^{-1/2}\right)$ for the real interpolation of order $(\frac{1}{4},\infty)$, see \cite[Proposition 3.1]{jabin2004real}. Then, for all $t\in\mathbb{R}_{+}$, we have
\[
K(t)=\inf_{a_{1}+a_{2}=\eta} \|a_{1}\|_{L^{2}_{t}H_{x}^{1/2}}+t\|a_{2}\|_{L^{2}_{t}H_{x}^{-1/2}}
\] 	 
Thus from  \cite{jabin2004real}, we need to show that $K(t)\le Ct^{1/4}$. We choose $\lambda$ such that $t=\lambda^{2}$ for $t>0$. 
\begin{itemize}
\item If $0<t<1$, we have $\|\eta^{2}_{2}\|_{L^{2}_{t}H_{x}^{-1/2}}\le C \lambda^{-3/2}$. Then $\eta_{2}^{2}$ and $a_{1}=\eta^{1}$ and $a_{2}=\eta^{2}$ we conclude that $K(t)\le Ct^{1/4}$. 

\item For $t>1$, we can rewrite $\eta^{2}$ as $\eta^{2}=\eta_{3}^{2}+\eta_{4}^{2}$ such that  $\eta_{3}^{2}\in \lambda^{-3/2}L^{2}_{t}H_{x}^{-1/2}$ and $\eta_{4}^{2}\in \lambda^{1/2}L^{2}_{t}H_{x}^{3/2}$, then we define 
\[
K_{1}(t)=\inf_{a_{1}+a_{2}=\eta} \|a_{1}\|_{L^{2}_{t}H_{x}^{1/2}+L^{2}_{t}H_{x}^{3/2}}+t\|a_{2}\|_{L^{2}_{t}H_{x}^{-1/2}}
\]
we obtain $K_{1}(t)\le Ct^{1/4}$ for $a_{1}=\eta_{1}+\eta^{2}_{4}$ and $a_{2}=\eta^{2}_{1}+\eta^{2}_{3}$ which implies  that 
\[
\eta\in \left(L^{2}_{t}H^{1/2}+L_{t}^{2}H^{3/2}, L^{2}_{t}H_{x}^{-1/2}\right).
\]
\end{itemize}
Finally, we deduce the compactness result. This finishes the proof.

\end {proof}



\section{Basis in $L^2(\Omega\times\mathbb{S}^2)$} \label{appendixb}
\begin{lemma}\label{lem:basis}
	There exists an orthonormal basis $\{\varphi_k\}_{k=1}^\infty$ of $L^2(\Omega\times\mathbb{S}^2)$ with $\varphi_k \in H^1(\Omega)\otimes L^2(\mathbb{S}^2)$.
\end{lemma}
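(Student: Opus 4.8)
The statement asks for an orthonormal basis of $L^2(\Omega\times\mathbb{S}^2)$ whose elements are tensor products lying in $H^1(\Omega)\otimes L^2(\mathbb{S}^2)$. The plan is to build it as a tensor product of a basis of $L^2(\Omega)$ coming from elliptic eigenfunctions and an arbitrary orthonormal basis of $L^2(\mathbb{S}^2)$. Concretely, first I would take $\{w_j\}_{j=1}^\infty$ to be the eigenfunctions of the Dirichlet Laplacian on $\Omega$ (or the Neumann Laplacian, or any self-adjoint realization with compact resolvent); by standard elliptic theory each $w_j\in H^1(\Omega)$ (indeed in $H^2(\Omega)$ when $\partial\Omega$ is smooth enough, but $H^1$ is all we need) and $\{w_j\}$ is a complete orthonormal system in $L^2(\Omega)$. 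Second, I would fix any orthonormal basis $\{Y_\ell\}_{\ell=1}^\infty$ of $L^2(\mathbb{S}^2)$ — for instance the spherical harmonics. Third, set $\varphi_{j,\ell}(x,\beta):=w_j(x)Y_\ell(\beta)$ and relabel the countable family $\{\varphi_{j,\ell}\}$ by a single index $k$.

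\textbf{Verification steps.} I then need to check three things. (i) Orthonormality: $((\varphi_{j,\ell},\varphi_{j',\ell'}))=\big(\int_\Omega w_jw_{j'}\,dx\big)\big(\int_{\mathbb{S}^2}Y_\ell Y_{\ell'}\,d\beta\big)=\delta_{jj'}\delta_{\ell\ell'}$ by Fubini, since $L^2(\Omega\times\mathbb{S}^2)\cong L^2(\Omega)\otimes L^2(\mathbb{S}^2)$ with the product measure. (ii) Completeness: this is the standard fact that the algebraic tensor product of complete orthonormal systems in two $L^2$ spaces is a complete orthonormal system in the $L^2$ space of the product measure; one proves it by noting that finite linear combinations of products $f(x)g(\beta)$ are dense in $L^2(\Omega\times\mathbb{S}^2)$ (e.g.\ simple functions on measurable rectangles are dense), and each such product is approximated in $L^2$ by $\sum_{j,\ell} (f,w_j)(g,Y_\ell)\,w_jY_\ell$. (iii) Regularity: $\varphi_{j,\ell}=w_j\otimes Y_\ell$ with $w_j\in H^1(\Omega)$ and $Y_\ell\in L^2(\mathbb{S}^2)$, so $\varphi_{j,\ell}\in H^1(\Omega)\otimes L^2(\mathbb{S}^2)$ by definition of the tensor product of these spaces.

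\textbf{Main obstacle.} There is essentially no serious obstacle here; the only mild point requiring care is item (ii), the density of the algebraic tensor product, which one should cite rather than reprove (it is a classical fact about Hilbert tensor products, $L^2(X\times Y,\mu\times\nu)=L^2(X,\mu)\,\hat\otimes\,L^2(Y,\nu)$). On the measure-theoretic side one should be a little careful that $\mathbb{S}^2$ carries its surface measure and $\Omega$ the Lebesgue measure, and that the product $\sigma$-algebra coincides with the Borel $\sigma$-algebra on $\Omega\times\mathbb{S}^2$, so that Fubini applies; but this is routine for these nice spaces. Thus the proof is short: construct $\{w_j\}$ from elliptic eigenfunctions, tensor with spherical harmonics, and invoke the tensor-product completeness theorem.
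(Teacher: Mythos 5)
Your proposal is correct and follows essentially the same route as the paper: tensor a basis of $L^2(\Omega)$ with $H^1(\Omega)$ regularity (the paper takes an orthogonal basis of $H^1(\Omega)$ that is orthonormal in $L^2(\Omega)$; you take Laplacian eigenfunctions, which is a concrete instance) with an orthonormal basis of $L^2(\mathbb{S}^2)$, and invoke completeness of product systems in $L^2$ of the product measure. Your write-up merely spells out the orthonormality, completeness, and regularity checks that the paper leaves implicit.
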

\begin{proof}
	We take $\{\phi_i\}_{i=1}^\infty$ to be a orthogonal basis in $H^1(\Omega)$ which is also an orthonormal basis in $L^2(\Omega)$ and $\{\chi_j\}_{j=1}^\infty$ to be a orthonormal basis in $L^2(\mathbb{S}^2)$. Then 
	$$\varphi_{ij} = \phi_i \chi_j$$
	is a orthonormal basis in $L^2(\Omega\times\mathbb{S}^2)$. 
\end{proof}

\end{document}